\tikzset{curve/.style={settings={#1},to path={(\tikztostart)
    .. controls ($(\tikztostart)!\pv{pos}!(\tikztotarget)!\pv{height}!270:(\tikztotarget)$)
    and ($(\tikztostart)!1-\pv{pos}!(\tikztotarget)!\pv{height}!270:(\tikztotarget)$)
    .. (\tikztotarget)\tikztonodes}},
    settings/.code={\tikzset{quiver/.cd,#1}
        \def\pv##1{\pgfkeysvalueof{/tikz/quiver/##1}}},
    quiver/.cd,pos/.initial=0.35,height/.initial=0}
\tikzset{tail reversed/.code={\pgfsetarrowsstart{tikzcd to}}}
\tikzset{2tail/.code={\pgfsetarrowsstart{Implies[reversed]}}}
\tikzset{2tail reversed/.code={\pgfsetarrowsstart{Implies}}}
\newtheorem{theorem}{Theorem}[section]
\newtheorem{lemma}[theorem]{Lemma}
\newtheorem{corollary}[theorem]{Corollary}
\newtheorem{proposition}[theorem]{Proposition}
\newtheorem{definition}[theorem]{Definition}
\newtheorem{defprop}[theorem]{Definition-Proposition}
\newtheorem{remark}[theorem]{Remark}
\newtheorem{example}[theorem]{Example}
\begin{document}
\newcommand{\abs}[1]{\left\lvert#1\right\rvert}
\newcommand{\news}[1]{{\color{green} #1}}
\newcommand{\new}[1]{{\color{blue} #1}}
\newcommand{\newp}[1]{{\color{purple} #1}}
\newcommand{\old}[1]{{\color{red} #1}}
\definecolor{darkgreen}{rgb}{0.33, 0.42, 0.18}
\newcommand{\rjm}[1]{{\color{darkgreen} #1}}

\newcommand{\proj}{\mathop{\rm proj}\nolimits}%
\newcommand{\inj}{\mathop{\rm inj}\nolimits}%
\providecommand{\Gen}{\mathop{\rm Gen}\nolimits}%
\providecommand{\Cogen}{\mathop{\rm Cogen}\nolimits}%
\providecommand{\Filt}{\mathop{\rm Filt}\nolimits}%
\providecommand{\cone}{\mathop{\rm cone}\nolimits}%
\providecommand{\Sub}{\mathop{\rm Sub}\nolimits}%
\providecommand{\rad}{\mathop{\rm rad}\nolimits}%
\providecommand{\coker}{\mathop{\rm coker}\nolimits}%
\providecommand{\im}{\mathop{\rm im}\nolimits}%
\providecommand{\btop}{\mathop{\rm top}\nolimits}%

\def\A{\mathcal{A}}
\def\C{\mathcal{C}}
\def\D{\mathcal{D}}
\def\P{\mathcal{P}}
\def\Y{\mathbb{Y}}
\def\Z{\mathcal{Z}}
\def\K{\mathcal{K}}
\def\F{\mathcal{F}}
\def\Eps{\mathcal{E}}
\def\T{\mathcal{T}}
\def\U{\mathcal{U}}
\def\V{\mathcal{V}}
\def\W{\mathcal{W}}
\def\E{\mathsf{E}}
\def\M{\mathsf{M}}
\def\N{\mathsf{N}}
\def\X{\mathsf{X}}
\def\L{\mathsf{L}}
\def\Mcluster{\mathfrak{M}}

\def\PP{{\mathbb P}}
\providecommand{\add}{\mathop{\rm add}\nolimits}%
\providecommand{\End}{\mathop{\rm End}\nolimits}%
\providecommand{\Ext}{\mathop{\rm Ext}\nolimits}%
\providecommand{\Yext}{\mathop{\rm Yext}\nolimits}%
\providecommand{\Hom}{\mathop{\rm Hom}\nolimits}%
\providecommand{\ind}{\mathop{\rm ind}\nolimits}%
\providecommand{\pd}{\mathop{\rm pd}\nolimits}%
\providecommand{\id}{\mathop{\rm id}\nolimits}%
\providecommand{\gldim}{\mathop{\rm gl.dim}\nolimits}%
\newcommand{\modd}{\mathop{\rm mod}\nolimits}%
\newcommand{\res}{\mathop{\mathsf{res}}\nolimits}%
\newcommand{\stautilt}{\textnormal{s$\tau$-tilt }}
\newcommand{\taugenmin}{\textnormal{$\tau$-gen-min }}
\newcommand{\ftors}{\textnormal{f-tors }}

\title{Mutation of $\tau$-exceptional sequences for acyclic quivers over local algebras}
\author[I. Nonis]{Iacopo Nonis}
\address{School of Mathematics \\ 
University of Leeds \\ 
Leeds, LS2 9JT \\ 
United Kingdom \\
}
\email{mmin@leeds.ac.uk}

\begin{abstract}
    Let $k$ be an algebraically closed field. Let $R$ be a local commutative finite dimensional $k$-algebra and let $Q$ be a quiver with no loops or oriented cycles. We show that mutation of $\tau$-exceptional sequences over $\Lambda = R\otimes_k kQ \cong RQ$ in the sense of Buan, Hanson, and Marsh coincides with the classical mutation of exceptional sequences defined by Crawley-Boevey and Ringel. In particular, the braid group acts transitively on the set of complete $\tau$-exceptional sequences in $\modd{\Lambda}$.
\end{abstract}
\maketitle
\tableofcontents  

\section*{Introduction}

An object in an abelian or triangulated category is called \textit{exceptional} if its endomorphism algebra is a division ring and it has no self-extension in any degree. An \textit{exceptional sequence} is a sequence of exceptional objects satisfying some $\Hom$ and $\Ext$ vanishing conditions. Exceptional sequences were first considered in the algebraic geometry context in \cite{Bondal, Gorodentsev, GR} together with the notion of \textit{mutation} on such sequences.  Bondal \cite{Bondal} proved that the braid group acts on the set of exceptional sequences over a triangulated category.  

Later, exceptional sequences were considered in the setting of hereditary finite dimensional algebras by Crawley-Boevey \cite{Boevey-ExcSeq} and Ringel \cite{braid_group_action_on_exc_seq_Ringel}. The \textit{perpendicular categories} studied by Geigle and Lenzing \cite{Geigle_Lenzing_Perp_Subcat} play a central role in this theory. If the length of an exceptional sequence equals the number of simple modules the sequence is said to be \textit{complete}. Complete sequences always exist over hereditary algebras, and the braid group acts transitively on the set of complete exceptional sequences \cite{Boevey-ExcSeq}. 

Motivated by cluster-tilting theory, Adachi, Iyama, and Reiten introduced  \textit{$\tau$-tilting theory} for finite dimensional algebras \cite{tau-tiling-theory}. They defined a $\Lambda$-module $M$ to be $\tau$-\textit{rigid} if $\Hom_\Lambda(M,\tau M) = 0$, where $\tau$ denotes the Auslander-Reiten (AR) translation in $\modd \Lambda$. As a consequence of the AR duality, a $\tau$-rigid module $M$ is rigid (i.e. $\Ext_\Lambda^1(M, M) = 0 $). In general, the converse holds for hereditary algebras. Hence, indecomposable $\tau$-rigid modules, indecomposable rigid modules, and exceptional modules coincide over hereditary algebras. 

Let $M^{^\perp}$ be the full subcategory of $\modd \Lambda$ consisting of all modules $X$ such that $\Hom_\Lambda(M,X) = 0$, and define ${^\perp}M$ dually. For a basic $\tau$-rigid $\Lambda$-module $M$, Jasso \cite{Jasso_Reduction} defined the $\tau$\textit{-perpendicular category} of $M$ to be $J(M):= M^{\perp}\cap {^\perp \tau M }$. In particular, $\tau$-perpendicular categories and Geigle-Lenzing perpendicular categories coincide over hereditary algebras. 

Jasso proved that every $\tau$-perpendicular category $J(M)$ is equivalent to the module category of a finite dimensional algebra with $\abs{\Lambda}-\abs{M}$ 
simple modules, where $\abs{-}$ denotes the number of indecomposable direct summands. Moreover, $\tau$-perpendicular subcategories are functorially finite (see \cite[Prop. 4.2]{Rigid_ICE-subcategories} and \cite[Lemma 4.7]{Enomoto-Sakai}) and wide (see \cite[Thm. 4.12]{Beyond_tau-tilting_theory}) subcategories of $\modd\Lambda$. It follows that a module $M$ which is $\tau$-rigid in a $\tau$-perpendicular subcategory $\W$ is rigid in $\modd\Lambda$. However, in general, such a module is not necessarily $\tau$-rigid in $\modd\Lambda$. 

Following these ideas, Buan and Marsh \cite{tauExcSeq_BM} recursively defined a sequence $(M_1,\cdots,M_t)$ of indecomposable $\Lambda$-modules to be a $\tau$\textit{-exceptional sequence} if $M_t$ is $\tau$-rigid in $\modd\Lambda$ and $(M_1,\cdots, M_{t-1})$ is a $\tau$-exceptional sequence in $J(M_t)$. If $t = \abs{\Lambda}$, the sequence is said to be \textit{complete}. In particular, complete $\tau$-exceptional sequences always exist over arbitrary finite dimensional algebras. We often refer to a projective module in a $\tau$-perpendicular subcategory $\W$ as \textit{relative projective}.

More generally, a \textit{signed $\tau$-exceptional sequence} is a $\tau$-exceptional sequence in the full subcategory $\C(\Lambda):= \modd\Lambda\oplus(\modd\Lambda)[1]$ of the bounded derived category $D^b(\modd\Lambda)$ in which every relative projective object can be ``signed'' (see \cite[Def. 1.3]{tauExcSeq_BM}). Signed $\tau$-exceptional sequences generalize the \textit{signed exceptional sequences} for hereditary algebras defined by Igusa and Todorov in \cite{signed_exc_seq}.  It was proved in \cite{tauExcSeq_BM} that complete signed $\tau$-exceptional sequences are in bijection with ordered support $\tau$-tilting objects. This result generalizes the bijection between complete signed exceptional sequences and ordered cluster-tilting objects in the cluster category \cite{Tilting_Theory_and_Cluster_Combinatorics} of a hereditary algebra established in \cite{signed_exc_seq}. 

Signed exceptional sequences were originally introduced to describe morphisms in the \textit{cluster morphism category}, a category whose objects are finitely generated wide subcategories of the module category of a hereditary algebra. Later, the definition of the cluster morphism category was extended to the $\tau$-tilting finite case in \cite{a_category_of_wide_subcategories} and given the name of $\tau$-cluster morphism category in \cite{Hanson-Igusa_tau-cluster_morphism_categories_and_picture_groups}, where they proved that the classifying space of this category is a cube complex. Moreover, in many cases,  the classifying space of the $\tau$-cluster morphism category is a $K(\pi, 1)$ space with $\pi$ being the picture group of the algebra \cite{igusa2016picture} (see also \cite{Pairwise_compatibility_for_2-simple_minded_collections, barnard2022exceptional}). Finally, the definition of $\tau$-cluster morphism category of an arbitrary finite dimensional algebra was given by Buan and Hanson in \cite{tau-perpendicular_wide_subategories}. Its objects are $\tau$-perpendicular subcategories and morphisms can be described in terms of signed $\tau$-exceptional sequences. The $\tau$-cluster morphism category has also been studied using silting theory \cite{borve21_two-term} and from a geometric perspective \cite{a_geometric_perspective_on_the_tau_cluster_morphism_category, Kaipel_the_category_of_a_partitioned_fan}.

Representations of acyclic quivers $Q$ over the dual numbers were first studied by Ringel and Zhang in \cite{RepOfQuiversOverTheDualNumbers}. This is equivalent to studying the tensor product of $R = k[x]/(x^2)$ with $kQ$ over the field $k$. Subsequently, Geiss, Leclerc, and Schr\"{o}er \cite{Quivers_with_relations_for_symmetrizable_Cartan_matrices_I} studied the algebras $R\otimes_k kQ$, where $R = k[x]/(x^t)$, for $t\geq 2$. 

Recently, the author \cite{nonis2025tau} proposed a further generalization in which he studied (signed) $\tau$-exceptional sequences over the algebra $\Lambda = R\otimes_k kQ$, where $R$ is a finite dimensional local commutative $k$-algebra (we denote the tensor product $\otimes_k$ by $\otimes$). Notice that $\Lambda$ is isomorphic to $RQ$. Every indecomposable $\tau$-rigid $\Lambda$-module is isomorphic to $\Lambda\otimes_{kQ}M$, for some indecomposable $\tau$-rigid $kQ$-module $M$. Moreover, every $\tau$-perpendicular subcategory of $\modd \Lambda$ is equivalent to the module category of $R\otimes kQ'$, for an acyclic quiver $Q'$. Based on these results, the author established a bijection between complete (signed) ($\tau$-)exceptional sequences in $\modd kQ$ and complete (signed) $\tau$-exceptional sequences in $\modd \Lambda$ (see Section \ref{sec1} and Section \ref{sec2}). As a consequence, he proved that the $\tau$-cluster morphism categories of $kQ$ and $\Lambda$ are equivalent. 

Inspired by the triangulated case and the hereditary case, Buan, Hanson, and Marsh \cite{BHM_mutation} introduced a notion of \textit{mutation} for $\tau$-exceptional sequences (BHM mutation) using a notion of mutation of $\tau$-exceptional pairs (i.e. $\tau$-exceptional sequences of length two). In contrast to the triangulated and hereditary cases, there are finite dimensional $k$-algebras in which $\tau$-exceptional pairs can be mutated only in one direction; see \cite[Section 8]{BHM_mutation}. Hence, Buan, Hanson, and Marsh introduced the notion of \textit{left} (resp. \textit{right)} \textit{mutable} $\tau$-exceptional pairs (see \cite[Section 4]{BHM_mutation}) and they proved that every $\tau$-exceptional pair is both right and left mutable when $\Lambda$ is hereditary, $\tau$-tilting finite or has rank two. 

A $\tau$-exceptional sequence $\M = (M_1,\cdots, M_t)$ is \textit{left} (resp. \textit{right}) \textit{i-mutable} (for $1\leq i\leq t-1$) if the $\tau$-exceptional pair $(M_i,M_{i+1})$ is left (resp. right) mutable in the iterated $\tau$-perpendicular subcategory $$J(M_{i+2},\cdots, M_{t}) := J_{J(M_{i+3},\cdots, M_t)}(M_{i+2}).$$ An algebra $\Lambda$ is called \textit{mutation complete}  if every $\tau$-exceptional sequence $(M_1,\cdots, M_t)$ is both left and right $i$-mutable (for $1\leq i\leq t-1$). As a consequence of the above result, if $\Lambda$ is hereditary, $\tau$-tilting finite or has rank two, then it is mutation complete. Over hereditary algebras, exceptional sequences and $\tau$-exceptional sequences coincide, and therefore they are mutable in the sense of Crawley-Boevey and Ringel \cite{Boevey-ExcSeq, braid_group_action_on_exc_seq_Ringel}. Buan, Hanson, and Marsh showed that in the hereditary case BHM mutation of $\tau$-exceptional sequences coincides with the classical mutation of exceptional sequences; see \cite[Thm. 6.1]{BHM_mutation}.

Motivated by the BHM mutation of $\tau$-exceptional sequences and the results established in \cite{nonis2025tau}, this paper aims to study mutation of $\tau$-exceptional sequences over $\Lambda = R\otimes_k kQ \cong RQ$, where $R$ is a finite dimensional local commutative $k$-algebra.

We work over an algebraically closed field $k$. In Section \ref{sec1} and Section \ref{sec2}, we recall some of the main results from \cite{tauExcSeq_BM, nonis2025tau}. In Section \ref{secMutation}, we recall the terminology from \cite{BHM_mutation} and we study mutation of $\tau$-exceptional sequences over $\Lambda = R\otimes kQ$ in the sense of Buan, Hanson, and Marsh. The following is our first main result which gives a new class of algebras that are mutation complete. 

\begin{proposition}[{ Prop. \ref{left-right_mutable_pair_for_Lambda}, Cor. \ref{Lambda_is_mutation_complete}}]
    Let $\Lambda = R\otimes kQ$. Then, every $\tau$-exceptional pair is both left and right mutable. As a consequence, $\Lambda$ is mutation complete. 
\end{proposition}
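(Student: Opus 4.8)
The plan is to deduce the statement from the hereditary case, transporting it along the base-change functor $F:=\Lambda\otimes_{kQ}-\colon\modd kQ\to\modd\Lambda$ and using the structural results recalled above. The first reduction is to a statement about pairs in the ambient category. Recall that a $\tau$-exceptional sequence $(M_1,\dots,M_t)$ over $\Lambda$ is left (resp.\ right) $i$-mutable precisely when the pair $(M_i,M_{i+1})$ is left (resp.\ right) mutable inside the iterated $\tau$-perpendicular subcategory $\W:=J(M_{i+2},\dots,M_t)$. By \cite{nonis2025tau} every $\tau$-perpendicular subcategory of $\modd\Lambda$ is equivalent, as an exact category, to $\modd(R\otimes kQ')$ for some acyclic quiver $Q'$, and under this equivalence $(M_i,M_{i+1})$ becomes a $\tau$-exceptional pair in $\modd(R\otimes kQ')$. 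Hence the whole statement, including mutation completeness, follows once one shows: for every finite dimensional local commutative $k$-algebra $R$ and every acyclic quiver $Q$, every $\tau$-exceptional pair in $\modd(R\otimes kQ)$ is both left and right mutable.

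Next I would match $\tau$-exceptional pairs over $\Lambda$ with exceptional pairs over $kQ$. Let $(M_1,M_2)$ be a $\tau$-exceptional pair in $\modd\Lambda$. Since $M_2$ is indecomposable $\tau$-rigid, $M_2\cong\Lambda\otimes_{kQ}N_2$ for an indecomposable $\tau$-rigid — equivalently exceptional — $kQ$-module $N_2$, and $J_\Lambda(M_2)$ is equivalent to $\modd(R\otimes kQ')$ where $\modd kQ'\simeq N_2^{\perp}$ is the Geigle–Lenzing perpendicular category of $N_2$; moreover \cite{nonis2025tau} shows this equivalence identifies the restriction of $F$ to $N_2^{\perp}$ with base change $\modd kQ'\to\modd(R\otimes kQ')$. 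Because $M_1$ is indecomposable $\tau$-rigid in $J_\Lambda(M_2)$, it corresponds to an indecomposable exceptional $kQ'$-module, i.e.\ to an indecomposable exceptional $kQ$-module $N_1\in N_2^{\perp}$. Thus $\tau$-exceptional pairs $(M_1,M_2)$ over $\Lambda$ correspond bijectively to exceptional pairs $(N_1,N_2)$ over $kQ$, with each entry sent to its image under $F$.

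Then I would show that $F$ intertwines BHM mutation, i.e.\ that $(M_1,M_2)$ is left (resp.\ right) mutable over $\Lambda$ if and only if $(N_1,N_2)$ is left (resp.\ right) mutable over $kQ$, and that the mutated pairs again correspond under the bijection above. BHM's construction of the mutated pair of $(A,B)$ is assembled from a Bongartz-type completion inside a $\tau$-perpendicular subcategory, minimal left/right $\add$-approximations, and mapping cones taken in $\C(\Lambda)=\modd\Lambda\oplus(\modd\Lambda)[1]$. Each of these should commute with $F$: the functor $F$ is exact, preserves indecomposability and $\tau$-rigidity, sends (relative) projectives to (relative) projectives and respects their signs, commutes with the passage to $\tau$-perpendicular subcategories, and — since $\Hom_\Lambda(FX,FY)\cong R\otimes\Hom_{kQ}(X,Y)$ naturally — sends minimal left/right $\add$-approximations to minimal ones and hence commutes with the formation of the relevant cones. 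All of these properties are established in, or are immediate consequences of, \cite{nonis2025tau}. It follows that $F$ carries left/right mutation of exceptional pairs over $kQ$ to left/right mutation of $\tau$-exceptional pairs over $\Lambda$, so mutability transfers in both directions.

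Finally, since $kQ$ is hereditary, every exceptional pair $(N_1,N_2)$ over $kQ$ is both left and right mutable, by the classical theory of Crawley-Boevey and Ringel \cite{Boevey-ExcSeq,braid_group_action_on_exc_seq_Ringel} together with \cite[Thm.\ 6.1]{BHM_mutation} identifying BHM mutation with classical mutation in the hereditary case. By the intertwining step, $(M_1,M_2)$ is then both left and right mutable over $\Lambda$, which by the first reduction yields the proposition and the corollary. The hard part is the intertwining step: verifying that BHM's mutation procedure — phrased through signed objects, the sign of relative projective summands, and minimal approximations in $\C(\Lambda)$ — is genuinely compatible with $F$. The delicate points are that $F$ preserves minimality of the approximations involved, not merely exactness, and that it behaves correctly with respect to the signing of relative projective summands; both should reduce to the description of $F$ on $\Hom$-spaces and to the compatibility of $F$ with $\tau$-perpendicular subcategories from \cite{nonis2025tau}, but they must be checked with care.
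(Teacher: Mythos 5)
There is a genuine gap: your central ``intertwining'' step does not address what \emph{mutability} actually means, and is partly circular. By Definitions 3.5 and 3.6 (following \cite[Def. 4.1, 4.2]{BHM_mutation}), a pair $(B,C)$ is left \emph{im}mutable only if it is left irregular \emph{and} the $\tau$-perpendicular category $J(B,C)$ fails to be left finite; so proving mutability amounts to controlling these two conditions, not to transporting the mutation operation itself. Indeed, for a left irregular pair the mutation $\varphi$ of Definition-Proposition \ref{varphi_left_irregular} is only \emph{defined} when the pair is already left mutable (one needs $\T(J(L))$ to be functorially finite to form $\P_{ns}$ and $\P_s$), so you cannot establish mutability by showing that the mutation procedure commutes with $\Lambda\otimes_{kQ}-$. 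What you would actually need to transfer is the left/right finiteness of $J(M_1,M_2)$, and this is precisely the one ingredient your argument never mentions. The paper's proof is a three-liner for exactly this reason: by Theorem \ref{bijection_of_tau-perp_subcategories}(ii) (from \cite{nonis2025tau}), the $\tau$-perpendicular, left finite wide, and right finite wide subcategories of $\modd\Lambda$ all coincide, so the second clause in the definition of (im)mutability can never be satisfied and every pair is both left and right mutable, with no reference to the hereditary case at all. Mutation completeness then follows since every iterated $\tau$-perpendicular subcategory is again of the form $\modd(R\otimes kQ')$ (Theorem \ref{J(Lambda_otimes_kQ)=mod(R_otimes_kQ')}) --- this part of your reduction is fine and matches the paper's Corollary \ref{Lambda_is_mutation_complete}.

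A secondary problem is that you are conflating this proposition with the much harder Theorem \ref{mutation_in_Lambda}. The claim that induction ``commutes with Bongartz-type completions, minimal approximations, cones, and signs'' and that all of this is ``immediate'' from \cite{nonis2025tau} is far too optimistic: establishing that $\varphi$ commutes with $\Lambda\otimes_{kQ}-$ is the main content of the paper (Propositions \ref{bijection_of_left_(ir)regular_pairs}, \ref{mutation_left_regular}, \ref{analog_of_6.7_BHM} and the proof of Proposition \ref{mutation_pairs_in_Lambda}), and it is proved \emph{after}, and partly \emph{using}, the present proposition (mutability is needed even to speak of $\varphi(\Lambda\otimes_{kQ}B,\Lambda\otimes_{kQ}C)$ in the irregular case). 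As written, your argument both assumes too much and proves the wrong thing.
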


Denote by $\Lambda\otimes_{kQ}-$ the induction functor from $\modd kQ$ to $\modd \Lambda$. Every $\tau$-exceptional sequence in $\modd \Lambda$ is of the form $\Lambda\otimes_{kQ}\M$ for a unique ($\tau$-)exceptional sequence $\M$ in $\modd kQ$ (see \cite[Thm. 5.10]{nonis2025tau}). Using the argument analogous to that used to prove that the BHM mutation of $\tau$-exceptional sequence and the classical mutation of exceptional sequences coincide over hereditary algebras \cite[Section 6]{BHM_mutation}, we prove the main result of this paper. 

\begin{theorem}[{Theorem \ref{mutation_in_Lambda}}]\label{thm0.2}
        Let $\Lambda = R\otimes kQ$ and let $\Lambda\otimes_{kQ}\M$ be a complete $\tau$-exceptional sequence in $\modd \Lambda$. Then $$\varphi_i(\Lambda\otimes_{kQ}\M) = \Lambda\otimes_{kQ}\sigma_i(\M)$$ for all $1\leq i \leq n-1$, where $\varphi_i$ and $\sigma_i$ denote the BHM mutation and the classical mutation of exceptional sequences, respectively.  
\end{theorem}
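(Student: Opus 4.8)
The plan is to reduce the statement about $\tau$-exceptional sequences over $\Lambda = R\otimes kQ$ to the corresponding (classical) statement over the hereditary algebra $kQ$, using the bijection $\Lambda\otimes_{kQ}-$ between exceptional sequences over $kQ$ and $\tau$-exceptional sequences over $\Lambda$ established in \cite[Thm. 5.10]{nonis2025tau}, together with the fact that this induction functor is compatible with $\tau$-perpendicular subcategories (every $J(\Lambda\otimes_{kQ} M)$ is equivalent to $\modd(R\otimes kQ')$ where $\modd kQ'$ is the Geigle--Lenzing perpendicular category $M^\perp$ in $\modd kQ$, and under this equivalence $\Lambda\otimes_{kQ}-$ intertwines the inclusions). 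Since BHM mutation is defined recursively by first passing to the iterated $\tau$-perpendicular subcategory $J(M_{i+2},\dots,M_t)$ and then mutating a $\tau$-exceptional pair there, and the classical mutation $\sigma_i$ is defined analogously via Geigle--Lenzing perpendicular categories, it suffices to handle the length-two case: show that for a $\tau$-exceptional pair $\Lambda\otimes_{kQ}(M_1,M_2)$ in $\modd\Lambda$ we have $\varphi_1(\Lambda\otimes_{kQ}(M_1,M_2)) = \Lambda\otimes_{kQ}\sigma_1(M_1,M_2)$, and then induct on $t$, at each step restricting to the appropriate perpendicular subcategory where the inductive hypothesis applies.

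For the length-two base case I would follow the strategy of \cite[Section 6]{BHM_mutation}. Recall that BHM mutation of a $\tau$-exceptional pair $(M_1, M_2)$ with $M_2$ being $\tau$-rigid in $\modd\Lambda$ is computed (when mutable — which holds here by the Proposition quoted from the excerpt, since $\Lambda = R\otimes kQ$ is mutation complete) via a specific procedure involving the Bongartz completion / relative projectives in the relevant $\tau$-perpendicular subcategories, and the output pair $(M_1', M_2')$ is determined by covering/envelope data. On the hereditary side, $\sigma_1(M_1, M_2)$ is described by Crawley--Boevey and Ringel via the minimal left/right $\add$-approximations fitting into the exceptional triangle/short exact sequence relating $M_1, M_2$. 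The key point is that the induction functor $\Lambda\otimes_{kQ}-$ is exact, sends indecomposable $\tau$-rigid $kQ$-modules to indecomposable $\tau$-rigid $\Lambda$-modules (by the results recalled in the excerpt), and commutes with taking (co)cones / approximations because it is a left adjoint that happens to also preserve the relevant $\Hom$ and $\Ext$ vanishing — so the approximation triangle defining $\sigma_1$ over $kQ$ is carried by $\Lambda\otimes_{kQ}-$ to the approximation data defining $\varphi_1$ over $\Lambda$. One needs to check that $\Lambda\otimes_{kQ}-$ sends the $\tau$-perpendicular subcategory $J(\Lambda\otimes_{kQ} M_2)$ compatibly, and that minimal approximations in $\modd kQ$ induce minimal approximations in $\modd\Lambda$; the minimality can be extracted from the corresponding statement in \cite{nonis2025tau} or checked directly using that $\Lambda$ is free of finite rank over $kQ$.

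The main obstacle I expect is the bookkeeping around relative projectives and the ``signs'' in the recursive definition: BHM mutation is really defined on signed $\tau$-exceptional sequences in $\C(\Lambda)$, and one must verify that the identification $\Lambda\otimes_{kQ}-$ respects signs and relative-projective structure at every stage of the recursion, not merely on objects. Concretely, the relative projectives of $J(\Lambda\otimes_{kQ}M_2) \simeq \modd(R\otimes kQ')$ must be shown to correspond, under $\Lambda\otimes_{kQ}-$, to the relative projectives of $J_{kQ}(M_2) = M_2^\perp \simeq \modd kQ'$ tensored up — this is where the hypothesis that $R$ is local is used, so that the relative-projective/relative-injective combinatorics over $R\otimes kQ'$ mirror those over $kQ'$. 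Once this compatibility is in place at each recursion level, the inductive step is formal: apply the length-two result inside $J(M_{i+2},\dots,M_t)$, which by the compatibility is itself of the form $R\otimes kQ''$, and conclude. I would therefore organize the proof as (1) recall/establish the compatibility of $\Lambda\otimes_{kQ}-$ with $\tau$-perpendicular subcategories and relative projectives, citing \cite{nonis2025tau}; (2) prove the length-two identity $\varphi_1(\Lambda\otimes_{kQ}(M_1,M_2)) = \Lambda\otimes_{kQ}\sigma_1(M_1,M_2)$ by transporting the defining approximation sequences; (3) induct on the length using (1) to reduce to (2) inside the iterated perpendicular subcategory.
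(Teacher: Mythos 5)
Your overall reduction --- pass everything through the bijection $\Lambda\otimes_{kQ}-$ of Theorem \ref{bijection_of_tau_exc_seq}, use that every iterated $\tau$-perpendicular category is of the form $\modd(R\otimes kQ')$ (Theorem \ref{J(Lambda_otimes_kQ)=mod(R_otimes_kQ')}), and reduce to the length-two case --- is exactly the paper's strategy. However, your step (2) has a genuine gap. You propose to identify \emph{both} entries of $\varphi_1(\Lambda\otimes_{kQ}(M_1,M_2))$ with $\Lambda\otimes_{kQ}\sigma_1(M_1,M_2)$ by ``transporting the defining approximation sequences'', but BHM mutation of a left \emph{irregular} pair $(B,C)$ is not computed from an approximation triangle: by Definition-Proposition \ref{varphi_left_irregular} it is $\varphi(B,C) = (\mathcal{E}_Y(X'),Y)$ with $X' = \P_s(\Gen\P_{ns}(\T(J(L))))$ and $Y = \P_{ns}(\T(J(L)))/X'$, i.e.\ it is read off from (split) Ext-projectives of the smallest torsion class containing $J(L)$. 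Showing that this recipe commutes with induction is precisely the content of Proposition \ref{analog_of_6.7_BHM} (gen-minimality of $\Lambda\otimes_{kQ}L$, cogen-minimality of $\tau(\Lambda\otimes_{kQ}\widetilde{L})$, the identification $\Lambda\otimes_{kQ}\widetilde{L} = \P_{ns}(\T(J(\Lambda\otimes_{kQ}L)))$, and so on), none of which follows merely from exactness of $\Lambda\otimes_{kQ}-$ and preservation of $\Hom$ and $\Ext$ vanishing; your sketch does not address these points.

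Moreover, the paper avoids the hardest identification entirely by a device you are missing: it proves only that the \emph{second} entry of the mutated pair is preserved, i.e. $(\Lambda\otimes_{kQ}B)' = \Lambda\otimes_{kQ}B$ (Proposition \ref{mutation_pairs_in_Lambda}, the analog of \cite[Prop. 6.2]{BHM_mutation}). Then, since $\varphi_i(\Lambda\otimes_{kQ}\M)$ and $\Lambda\otimes_{kQ}\sigma_i(\M)$ are two complete $\tau$-exceptional sequences agreeing in every position except the $i$-th, the uniqueness theorem for complete $\tau$-exceptional sequences (Theorem \ref{uniqueness_property_of_tau_exc_seq}) forces them to agree there as well. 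Without this step, or an explicit computation of the new module in $\modd\Lambda$ (which you do not supply), your argument does not close. Two smaller points: the claim that minimal approximations in $\modd kQ$ induce minimal approximations in $\modd\Lambda$ is not needed in the paper's proof and would be an extra burden to verify; and the sign bookkeeping you worry about is handled case-by-case through the objects $C_+$ and $B^+$ together with Theorem \ref{BM_bijection}(b), rather than by any general compatibility of $\Lambda\otimes_{kQ}-$ with signed sequences.
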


Since the braid group acts transitively on the set of complete exceptional sequences over hereditary algebras, the following consequence of Theorem \ref{thm0.2} provides a new class of algebras for which the braid group acts transitively on the set of complete $\tau$-exceptional sequences. 

\begin{corollary}[{Corollary \ref{transitive_action_on_tau-exc_seq_over_Lambda}}]
    There is a transitive braid group action on the set of complete $\tau$-exceptional sequences in $\modd\Lambda$. 
\end{corollary}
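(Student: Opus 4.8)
The plan is to transport the classical transitive braid group action on complete exceptional sequences over the hereditary algebra $kQ$ to $\modd\Lambda$ along the bijection induced by the induction functor. First, recall from \cite[Thm. 5.10]{nonis2025tau} that $\Lambda\otimes_{kQ}-$ induces a bijection $\Phi$ from the set of complete exceptional sequences in $\modd kQ$ onto the set of complete $\tau$-exceptional sequences in $\modd\Lambda$. By Crawley-Boevey \cite{Boevey-ExcSeq} (see also \cite{braid_group_action_on_exc_seq_Ringel}), the classical mutation operators $\sigma_1,\dots,\sigma_{n-1}$ are everywhere defined on complete exceptional sequences in $\modd kQ$, are invertible, and satisfy the braid relations; they therefore define an action of the braid group $B_n$ on this set, and this action is transitive.

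Next I would invoke the results of this paper. Since $\Lambda = R\otimes kQ$ is mutation complete, the BHM mutations $\varphi_1,\dots,\varphi_{n-1}$ are everywhere defined on complete $\tau$-exceptional sequences in $\modd\Lambda$, and by Theorem \ref{thm0.2} they satisfy $\varphi_i\circ\Phi = \Phi\circ\sigma_i$ for all $1\leq i\leq n-1$, i.e.\ $\varphi_i = \Phi\circ\sigma_i\circ\Phi^{-1}$. Consequently each $\varphi_i$ is invertible, and conjugating each braid relation satisfied by the $\sigma_i$ through $\Phi$ shows that the $\varphi_i$ satisfy the same braid relations. Hence the $\varphi_i$ define an action of $B_n$ on the set of complete $\tau$-exceptional sequences in $\modd\Lambda$, with respect to which $\Phi$ is $B_n$-equivariant.

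Finally, transitivity transfers along the equivariant bijection: given complete $\tau$-exceptional sequences $\Lambda\otimes_{kQ}\M = \Phi(\M)$ and $\Phi(\M')$ in $\modd\Lambda$, pick $b\in B_n$ with $b\cdot\M = \M'$ in $\modd kQ$; then $b\cdot\Phi(\M) = \Phi(b\cdot\M) = \Phi(\M')$. This proves that $B_n$ acts transitively on complete $\tau$-exceptional sequences in $\modd\Lambda$.

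I expect no serious representation-theoretic obstacle here, since all the substance is already contained in Theorem \ref{thm0.2}, the mutation completeness of $\Lambda$, and \cite[Thm. 5.10]{nonis2025tau}. The only point that needs care is bookkeeping: one must verify that the operators on both sides are genuinely total (exactly what mutation completeness and the bijection guarantee) and that the index conventions of $\Phi$, $\varphi_i$ and $\sigma_i$ match, so that the equivariance in Theorem \ref{thm0.2} is literally the conjugation identity needed to inherit invertibility and the braid relations.
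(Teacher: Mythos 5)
Your proposal is correct and follows essentially the same route as the paper: the paper's proof likewise combines the intertwining identity $\varphi_i(\Lambda\otimes_{kQ}\M) = \Lambda\otimes_{kQ}\sigma_i(\M)$ from Theorem \ref{mutation_in_Lambda} with the bijection of Theorem \ref{bijection_of_tau_exc_seq} and the classical transitivity over $kQ$. Your additional remarks on invertibility and the braid relations being inherited by conjugation through the bijection are sound bookkeeping that the paper leaves implicit.
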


In Section \ref{secRQ-lattices} we recall the notion of $RQ$-lattices and $R$-exceptional sequences from \cite{CB_RigidIntegralRep} and we prove that $R$-exceptional sequences and $\tau$-exceptional sequences in $\modd \Lambda$ coincide (see Proposition \ref{tau-exc=R-exc}). As a consequence, the braid action on complete R-exceptional sequences in the sense of \cite{CB_RigidIntegralRep} and the braid action on complete $\tau$-exceptional sequences in $\modd\Lambda$ coincide (see Theorem \ref{braid_action_on_R-exc=braid_action_on_tau-exc}). We conclude with Section \ref{secExample} in which we present a detailed example that explains the theory developed in this work.

\section{$\tau$-rigid modules and wide subcategories}\label{sec1}

Let $\Lambda$ be a basic finite dimensional algebra over an algebraically closed field $k$. Denote by $\modd\Lambda$ the category of finitely generated (left) $\Lambda$-modules and let $\proj\Lambda$ be the full subcategory of projective $\Lambda$-modules. Given a subcategory $\mathcal{X}\subseteq\modd\Lambda$, we denote by $\ind(\mathcal{X})$ the set of isomorphism classes of indecomposable objects in $\mathcal{X}$. We define $\mathcal{X}^{\perp} = \{Y\in\modd\Lambda \mid \Hom_\Lambda(\mathcal{X}, Y) = 0\}$, and define $^{\perp}\mathcal{X}$ dually. $\Gen\mathcal{X}$ (resp. $\Cogen\mathcal{X}$) denotes the smallest subcategory containing $\mathcal{X}$ and being closed under factor objects (resp. subobjects). For $M\in\modd\Lambda$, we denote by $\add M$ the subcategory generated by direct summands of finite direct sums of $M$. We write $\Gen M$ (resp. $\Cogen M$) for $\Gen(\add M)$ (resp. $\Cogen(\add M)$). Any $\Lambda$-module $X$ is considered to be basic when possible and we will denote by $\abs{X}$ the number of indecomposable direct summands of $X$. We let $\abs{\Lambda} = n$. For an arbitrary module category $\W$, let $\tau_\W$ (or simply $\tau$ if no confusion appears) denote the Auslander-Reiten translation in $\W$.

Now let $\Lambda = R\otimes kQ$ unless stated otherwise. It is well-known there is a functor $\Lambda\otimes_{kQ}-: \modd{kQ}\to \modd\Lambda$ known as the \textit{induction functor}. 
The $\Lambda$-modules in the image of the induction functor are called \textit{induced modules}. This section aims to review some of the main results from \cite{nonis2025tau} on $\tau$-rigid $\Lambda$-modules and wide subcategories of $\modd \Lambda$.

We begin by describing $\tau$-rigid $\Lambda$-modules. These are precisely the induced $\Lambda$-modules arising from $\tau$-rigid $kQ$-modules. 

\begin{proposition}[{\cite[Prop. 1.13]{nonis2025tau}}]\label{tau-rigid-Lambda1:1tau-rigid-kQ}
    The induction functor $\Lambda\otimes_{kQ}-$ induces a bijection between the set of isoclasses of indecomposable $\tau$-rigid $kQ$-modules and the set of isoclasses of indecomposable $\tau$-rigid $\Lambda$-modules.
\end{proposition}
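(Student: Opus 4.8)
The plan is to show that the induction functor $\Lambda\otimes_{kQ}-$ restricts to a well-defined injective map on indecomposables, and then that this map is surjective onto the indecomposable $\tau$-rigid $\Lambda$-modules. First I would recall that $\Lambda = R\otimes kQ$ is free of finite rank as an $R$-module, so that the induction functor is exact (since $\Lambda$ is flat over $kQ$, being $R\otimes kQ$ with $R$ a $k$-vector space) and sends indecomposables to indecomposables: indeed, $\End_\Lambda(\Lambda\otimes_{kQ}M)\cong R\otimes\End_{kQ}(M)$, which is local precisely when $\End_{kQ}(M)$ is local (as $R$ is local commutative), so $\Lambda\otimes_{kQ}M$ is indecomposable iff $M$ is. This also shows the map is injective on isoclasses: from $\Lambda\otimes_{kQ}M\cong\Lambda\otimes_{kQ}N$ one recovers $M\cong N$ by applying $k\otimes_R-$ (equivalently, restricting scalars and using that the $R$-socle-type invariants determine the $kQ$-module), or by a direct dimension/composition-series argument.

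Next I would establish the key compatibility $\tau_\Lambda(\Lambda\otimes_{kQ}M)\cong\Lambda\otimes_{kQ}(\tau_{kQ}M)$ (at least up to the relevant $\Hom$-vanishing), using that a minimal projective presentation $P_1\to P_0\to M\to 0$ over $kQ$ induces, after applying the exact functor $\Lambda\otimes_{kQ}-$, a minimal projective presentation of $\Lambda\otimes_{kQ}M$ over $\Lambda$ (minimality is preserved because $\Lambda\otimes_{kQ}-$ sends the radical into the radical, as $\rad\Lambda = \rad R\otimes kQ + R\otimes\rad kQ$ maps correctly). Applying the Nakayama functor $\nu_\Lambda$ and comparing with $\nu_{kQ}$ via the isomorphism $D\Lambda\otimes_\Lambda(\Lambda\otimes_{kQ}-)\cong\Lambda\otimes_{kQ}(D(kQ)\otimes_{kQ}-)$ then yields the formula for $\tau$. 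From here, $\Hom_\Lambda(\Lambda\otimes_{kQ}M,\ \Lambda\otimes_{kQ}\tau_{kQ}M)\cong R\otimes\Hom_{kQ}(M,\tau_{kQ}M)$ by adjunction together with the $R$-freeness, so this vanishes iff $\Hom_{kQ}(M,\tau_{kQ}M)=0$; this proves $\Lambda\otimes_{kQ}M$ is $\tau$-rigid iff $M$ is, giving both well-definedness and injectivity of the claimed bijection on the $\tau$-rigid locus.

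For surjectivity, I would argue that every indecomposable $\tau$-rigid $\Lambda$-module $X$ is induced. One route: restrict $X$ to $kQ$ (via $kQ\hookrightarrow\Lambda$, $a\mapsto 1\otimes a$) and analyze the $R$-action. Since $R$ is local with residue field $k$ and $\Lambda = R\otimes kQ$, the module $X$ carries the structure of an $R$-module in $\modd kQ$, i.e.\ an action of $R$ by $kQ$-endomorphisms; a $\tau$-rigid module over such an algebra is in particular rigid, and rigidity forces the $R$-module structure on $X$ to be free (a non-free $R$-module structure would produce a self-extension, via the standard argument that $\mathrm{Tor}$/$\mathrm{Ext}$ over $R$ survives tensoring—this is exactly the phenomenon behind \cite{CB_RigidIntegralRep} and \cite{RepOfQuiversOverTheDualNumbers}). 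Freeness of the $R$-structure means $X\cong\Lambda\otimes_{kQ}M$ for $M = k\otimes_R X$, and then $\tau$-rigidity of $X$ forces $\tau$-rigidity of $M$ by the compatibility above. I expect the main obstacle to be precisely this surjectivity step — showing the $R$-action is free: one must rule out $\tau$-rigid modules with torsion $R$-structure, and the cleanest argument likely uses the characterization of $\tau$-rigidity via $\Hom$-orthogonality to $\tau X$ together with an explicit construction of a nonzero map $X\to\tau X$ from any $R$-torsion, or alternatively invokes the classification of $\tau$-perpendicular subcategories of $\modd\Lambda$ already available in \cite{nonis2025tau}. Since the Proposition is quoted from \cite[Prop. 1.13]{nonis2025tau}, in the present paper it suffices to cite that result, but the sketch above indicates the structure of the underlying proof.
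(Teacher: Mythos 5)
The paper does not actually prove this proposition: it is imported verbatim from \cite[Prop.~1.13]{nonis2025tau}, so there is no in-text argument to compare yours against. That said, the first two thirds of your sketch match the expected structure of the underlying proof: exactness of induction, the isomorphism $\End_\Lambda(\Lambda\otimes_{kQ}M)\cong R\otimes\End_{kQ}(M)$ giving preservation of indecomposability and injectivity on isoclasses, the compatibility $\tau_\Lambda(\Lambda\otimes_{kQ}M)\cong\Lambda\otimes_{kQ}(\tau_{kQ}M)$ via minimal projective presentations and the Nakayama functor, and the adjunction isomorphism $\Hom_\Lambda(\Lambda\otimes_{kQ}M,\Lambda\otimes_{kQ}N)\cong R\otimes\Hom_{kQ}(M,N)$, which shows that $\tau$-rigidity is both preserved and reflected. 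All of this is correct in outline.

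The gap is in the surjectivity step, in two places. First, your inference ``freeness of the $R$-structure means $X\cong\Lambda\otimes_{kQ}(k\otimes_RX)$'' is false as stated: a $\Lambda$-module that is free over $R$ at every vertex (an $RQ$-lattice) need not be induced. For $R=k[x]/(x^2)$ and $Q\colon 1\to 2$, the lattice with $X_1=X_2=R$ and the arrow acting by multiplication by $x$ is free over $R$ but not induced, since induced modules $\Lambda\otimes_{kQ}M\cong R\otimes M$ have arrows acting by $1\otimes M_a$. To get from ``rigid lattice'' to ``induced'' you must use rigidity a second time: a rigid lattice is determined up to isomorphism by its reduction $k\otimes_RX$ (this uniqueness-of-rigid-lifts statement is the content of \cite{CB_RigidIntegralRep}), and $\Lambda\otimes_{kQ}(k\otimes_RX)$ is a rigid lattice with the same reduction, hence isomorphic to $X$. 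Second, the genuinely hard step --- that an indecomposable rigid $\Lambda$-module must be free over $R$ at each vertex --- is only gestured at (``a non-free $R$-module structure would produce a self-extension''); this is precisely where the real work lies, and it requires carrying out in detail an argument of the type ``rigid implies locally free'' as in Geiss--Leclerc--Schr\"oer or Crawley-Boevey. So the skeleton is right and the citation strategy is appropriate for the present paper, but as a standalone proof the surjectivity argument needs both of these repairs.
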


Let $\Lambda$ be a finite dimensional algebra. By \cite[Prop. 4.12]{Rigid_ICE-subcategories}, a functorially finite wide subcategory $\W\subseteq\modd \Lambda$ is equivalent to the module category of a finite dimensional algebra. The next result relates the projective dimension of $M$ as a $\Lambda$-module and the projective dimension of $M$ as an object in $\W$. We denote by $\pd_\W$ the projective dimension in $\W$.

\begin{proposition}[{\cite[Prop. 3.7]{nonis2025tau}}]\label{pd_preserved}
   Let $\Lambda$ be a finite dimensional algebra. Let $m\geq 0$ and let $M\in\modd\Lambda$ with $\pd M = m$. Suppose $M$ lies in a functorially finite wide subcategory $\W\subseteq \modd \Lambda$. Then,  $\pd_\mathcal{W} M \leq m$.
\end{proposition}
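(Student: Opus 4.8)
The plan is to reduce the statement, using closure of $\W$ under kernels and extensions, to a vanishing statement about the top syzygy of $M$ computed inside $\W$, and then to propagate the hypothesis $\pd_\Lambda M=m$ down a chain of connecting homomorphisms. First, since $\W$ is functorially finite and wide it is equivalent to $\modd\Gamma$ for some finite-dimensional algebra $\Gamma$ by \cite[Prop.~4.12]{Rigid_ICE-subcategories}, so $\pd_\W$ is defined and $\W$ has enough projectives; moreover, being a full abelian subcategory of $\modd\Lambda$ closed under kernels, cokernels and extensions, the inclusion $\W\hookrightarrow\modd\Lambda$ is exact, short exact sequences in $\W$ are short exact in $\modd\Lambda$, and kernels and cokernels of maps between objects of $\W$ (taken in $\modd\Lambda$) again lie in $\W$. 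I would first dispose of the case $m=0$: a projective $\Lambda$-module lying in $\W$ is relative projective in $\W$, because an epimorphism of $\W$ is an epimorphism of $\modd\Lambda$, so a morphism out of such a module lifts along it in $\modd\Lambda$, and the lift, being a morphism of $\modd\Lambda$ between objects of $\W$, lies in $\W$.

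Next, using that $\W$ has enough projectives and is closed under extensions, an object $T\in\W$ is relative projective in $\W$ if and only if $\Ext^1_\Lambda(T,N)=0$ for all $N\in\W$. Take a minimal projective resolution $\cdots\to T_1\to T_0\to M\to 0$ of $M$ in $\W$ and let $\Omega^j_\W M\in\W$ be its $j$-th syzygy (an object of $\W$ by closure under kernels), so that $0\to\Omega^{j+1}_\W M\to T_j\to\Omega^j_\W M\to 0$ is short exact both in $\W$ and in $\modd\Lambda$. It suffices to prove $\Ext^1_\Lambda(\Omega^m_\W M,N)=0$ for every $N\in\W$, since then $\Omega^m_\W M$ is relative projective and hence $\pd_\W M\le m$. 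Applying $\Hom_\Lambda(-,N)$ to the short exact sequences above and using $\Ext^1_\Lambda(T_j,N)=0$ (each $T_j$ is relative projective and $N\in\W$), one gets, for each $N\in\W$, connecting homomorphisms
\[
\Ext^1_\Lambda(\Omega^m_\W M,N)\hookrightarrow\Ext^2_\Lambda(\Omega^{m-1}_\W M,N)\longrightarrow\cdots\longrightarrow\Ext^{m+1}_\Lambda(M,N),
\]
whose target is $0$ because $\pd_\Lambda M=m$ and whose first arrow is a monomorphism; one wants to conclude that the whole composite is a monomorphism, forcing $\Ext^1_\Lambda(\Omega^m_\W M,N)=0$.

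The hard part is exactly this last step. The remaining connecting homomorphisms $\Ext^k_\Lambda(\Omega^{m-k+1}_\W M,N)\to\Ext^{k+1}_\Lambda(\Omega^{m-k}_\W M,N)$ for $k\ge 2$ are monomorphisms precisely when $\Ext^k_\Lambda(T_{m-k},N)=0$ for the relative projectives $T_{m-k}$ and all $N\in\W$, and — unlike the degree-one vanishing, which is automatic — this need not hold for an arbitrary relative projective, since relative projectives of $\W$ need not be projective as $\Lambda$-modules. Controlling these higher extension groups in the range that actually occurs is where the finer structure of functorially finite wide subcategories (their description through torsion classes, or through iterated reductions of the ambient module category) has to be brought in; the corresponding low-degree statement, that $\Ext^i_\W(M,N)\to\Ext^i_\Lambda(M,N)$ is an isomorphism for $i\le 1$ and a monomorphism for $i=2$, follows readily from closure under extensions (an extension of $\W$-objects which, over $\Lambda$, extends to an extension of a larger $\W$-object already does so inside $\W$, since the new middle term lies in $\W$), and I expect the analogous control in higher degrees to be the main obstacle of the proof.
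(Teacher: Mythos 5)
Your reduction is set up correctly as far as it goes: the case $m=0$, the identification $\Ext^1_\W=\Ext^1_\Lambda$ on $\W$, the characterisation of relative projectives by $\Ext^1_\Lambda(-,\W)=0$, and the fact that the first connecting map $\Ext^1_\Lambda(\Omega^m_\W M,N)\to\Ext^2_\Lambda(\Omega^{m-1}_\W M,N)$ is injective are all fine, and this already proves the proposition for $m\le 1$ (which, incidentally, is the only case the present paper uses, in Lemma \ref{exceptional_with_pd<=1_is_tau-exc}). But for $m\ge 2$ you have not given a proof, and you say so yourself. Worse, the sufficient condition you point to for completing the chain of monomorphisms --- that $\Ext^k_\Lambda(T,N)=0$ for $k\ge 2$ whenever $T$ is relative projective in $\W$ and $N\in\W$ --- is simply false in general, so the argument cannot be finished along the route you sketch. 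Take $\Lambda=kQ/\rad^2$ with $Q\colon 1\to 2\to 3$ and $\W=\add(S_1\oplus S_3)$. This is a functorially finite wide subcategory (all the relevant $\Hom$ and $\Ext^1$ groups between $S_1$ and $S_3$ vanish), equivalent to $\modd(k\times k)$, so $S_1$ is relative projective in $\W$ and $S_3\in\W$; yet the minimal projective resolution $0\to P_3\to P_2\to P_1\to S_1\to 0$ gives $\Ext^2_\Lambda(S_1,S_3)\cong k\ne 0$. (The proposition itself holds trivially in this example, but it shows that "relative projectives have no higher $\Lambda$-extensions into $\W$" is not a usable lemma.)

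A related caution: the degree-two injectivity $\Ext^2_\W(X,Y)\hookrightarrow\Ext^2_\Lambda(X,Y)$ that you invoke does hold for wide subcategories (decompose a $2$-fold Yoneda extension as a product of two $1$-fold extensions and use that the vanishing criterion --- extendability of the second extension over the middle term of the first --- is detected by $\Ext^1$, where $\W$ and $\Lambda$ agree), but this argument does not propagate to $\Ext^{\ge 3}$, because the corresponding vanishing criterion in higher degrees allows the Yoneda equivalence to pass through objects outside $\W$. So neither of the two natural strategies (injectivity of $\Ext^{m+1}_\W\to\Ext^{m+1}_\Lambda$, or injectivity of the full chain of connecting maps) is available for $m\ge 2$ without further input. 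The paper itself does not reprove this statement --- it quotes it from \cite[Prop.~3.7]{nonis2025tau} --- so there is no in-text proof to compare with, but as written your proposal establishes the result only for $m\le 1$ and leaves the general case genuinely open.
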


Let $\Lambda$ be a finite dimensional algebra. Following \cite{tau-tiling-theory}, a pair $(M,P)$ of $\Lambda$-modules is called \textit{$\tau$-rigid} provided $M$ is $\tau$-rigid, $P\in\proj\Lambda$, and $\Hom_\Lambda(P,M) = 0$. Buan and Marsh \cite{tauExcSeq_BM} considered the corresponding object $M\oplus P[1]$ in the full subcategory $\C(\Lambda) := \modd{\Lambda}\oplus \modd{\Lambda}[1]$ of the derived category $D^b(\modd\Lambda)$. These objects will be referred to as \textit{support $\tau$-rigid}. 

Dually, an object $M\oplus I[-1]$ in $\C(\Lambda)[-1]$ is called \textit{support $\tau^{-1}$-rigid} if $M$ is support $\tau^{-1}$-rigid (i.e. $\Hom_\Lambda(\tau^{-1}M,M) = 0$), $I$ is an injective $\Lambda$-module, and $\Hom_\Lambda(M,I) = 0$. 

\begin{definition}
    Let $ U= M\oplus P[1]$ and $V = N\oplus I[-1]$ be support $\tau$-rigid support in $\C(\Lambda)$ and support $\tau^{-1}$-rigid in $\C(\Lambda)[-1]$, respectively. 
    \begin{enumerate}
        \item[(a)] \cite[Def. 3.3]{Jasso_Reduction} The \emph{$\tau$-perpendicular category} of $U$ is $J(U) := M^\perp \cap {^\perp\tau M}\cap P^\perp$; 
        \item[(b)] \cite[Def. 3.4]{tau-perpendicular_wide_subategories} The \emph{$\tau^{-1}$-perpendicular category} of $V$ is $J^d(V) := {^\perp N}\cap (\tau ^{-1} N)^\perp \cap {^\perp N}$. 
    \end{enumerate}
\end{definition}

\begin{remark}
    Let $U$ be a support $\tau$-rigid object in $\C(\Lambda)$. Then, the $\tau$-perpendicular category $J(U)$ coincides with the $\tau$-perpendicular category $J(M)$ for some $\tau$-rigid $\Lambda$-module $M$ (see for example \cite[Lemma 6.5]{nonis2025tau}). 
\end{remark}

Let $U$ be a $\tau$-rigid $\Lambda$-module. We say that an object $N\oplus Q[1]\in \C(J(U))\subseteq \C(\Lambda)$ is \textit{support $\tau$-rigid} in $\C(J(U))$ if the corresponding object in $\C(\Gamma_U)$ is support $\tau$-rigid, that is $N$ is $\tau_{J(U)}$-rigid, $Q$ lies in $\proj J(U)$, and $\Hom_{J(U)}(Q,N) = 0$. Let $f_U$ denote the torsion-free functor relative to the torsion pair $(\Gen U, U^{\perp})$. They established the following bijection.

\begin{theorem}[{\cite[Thm. 2.8]{BHM_mutation}}]\label{BM_bijection}
    Let $\Lambda$ be a finite dimensional algebra and let $U = M\oplus P[1]$ be a support $\tau$-rigid object in $\C(\Lambda)$. Then there is a bijection 
            \[
            \begin{tikzcd}
            {\left\{V\in \ind\C(\Lambda)\mid V\oplus U \text{ support }\tau\text{-rigid} \right\}} 
            \arrow[d, "\mathcal{E}_U"] \\
            {\left\{ W\in \ind\C(J(U)) \mid W \text{ support }\tau_{J(U)}\text{-rigid}\right\}}.
            \end{tikzcd}
            \]
        \begin{enumerate}
            \item[(a)] For $V\in\ind\C(\Lambda)$ with $V\oplus U$ support $\tau$-rigid, we have $\mathcal{E}_U(V)\in (\proj J(U))[1]$ if and only if $V\in \Gen M$ or $V\in(\proj\Lambda)[1]$; 
            \item[(b)] If $U\in \proj\Lambda[1]$ and $V\in \ind\C(\Lambda)$ with $V\oplus U$ support $\tau$-rigid, we have $\mathcal{E}_U(V) = V$. Equivalently, if $W$ is a $\tau_{J(P[1])}$-rigid module, then $\mathcal{E}^{-1}_{P[1]}(W) = W$;  
            \item[(c)] If $V\in (\ind\bmod{\Lambda})\setminus \Gen M$ with $V\oplus U$ support $\tau$-rigid, then $\mathcal{E}_U(V) = \mathcal{E}_M(V) = f_MV$. 
        \end{enumerate}
\end{theorem}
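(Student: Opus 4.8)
The plan is to prove Theorem~\ref{BM_bijection} by classifying the indecomposable support $\tau$-rigid complements of $U = M\oplus P[1]$ in $\C(\Lambda)$ and tracking, for each class, where it must be sent. Recall from \cite{Jasso_Reduction} that $J(U) = M^\perp\cap{}^\perp\tau M\cap P^\perp$ is a functorially finite wide subcategory of $\modd\Lambda$, equivalent to $\modd\Gamma$ with $\abs{\Gamma} = n-\abs{M}-\abs{P}$; let $(\Gen M,M^\perp)$ be the torsion pair attached to $M$ (the summand $P$ plays no role in it) and $f_M$ its torsion-free functor. The indecomposable $V\in\C(\Lambda)$ with $V\oplus U$ support $\tau$-rigid and $V\notin\add U$ split into three classes: (I) modules $V$ with $V\oplus M$ $\tau$-rigid, $\Hom_\Lambda(P,V)=0$ and $V\notin\Gen M$; (II) modules $V\in\Gen M\setminus\add M$ with $V\oplus M$ $\tau$-rigid and $\Hom_\Lambda(P,V)=0$; (III) shifted projectives $Q[1]$ with $\Hom_\Lambda(Q,M)=0$ and $Q\notin\add P$.

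For class (I): from $\Hom_\Lambda(V,\tau M)=0$ one gets $V\in{}^\perp\tau M$, and applying the left-exact functor $\Hom_\Lambda(-,\tau M)$ to the canonical sequence $0\to tV\to V\to f_M V\to 0$ (with $tV\in\Gen M$), together with the exactness of $\Hom_\Lambda(P,-)$, puts $f_M V$ into $M^\perp\cap{}^\perp\tau M\cap P^\perp=J(U)$. One then checks that $f_M V$ is indecomposable and $\tau_{J(U)}$-rigid by comparing $\tau_{J(U)}(f_M V)$ with $\tau_\Lambda V$ through the exactness properties of $f_M$ on the torsion pair and the Auslander--Reiten theory of the functorially finite subcategory $J(U)$, and that $V\mapsto f_M V$ is a bijection from class (I) onto the set of all indecomposable $\tau_{J(U)}$-rigid modules, with inverse obtained by reversing the torsion-theoretic construction. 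This yields assertion (c); specialising to $M=0$, so that $f_M$ is the identity functor and class (I) consists of all $\tau_\Lambda$-rigid modules $V$ with $\Hom_\Lambda(P,V)=0$, yields assertion (b). For classes (II) and (III): here $f_M$ collapses $V$ (one has $f_M V=0$, respectively $V$ is already shifted), and these indecomposables must be matched bijectively with the shifted indecomposable relative projectives of $J(U)$. Establishing this matching is a counting argument built on the Bongartz complement of the $\tau$-rigid pair $(M,P)$ --- so that $M$ together with a suitable choice of such $V$'s assembles into a support $\tau$-tilting object containing $M\oplus P[1]$ --- combined with the compatibility of relative projectivity in $J(U)$ with projectivity over $\Lambda$, cf.~Proposition~\ref{pd_preserved}. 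This yields assertion (a), and the three classes together give the bijection $\mathcal{E}_U$.

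The main obstacle is class (I): showing that $f_M$ transports it bijectively onto the indecomposable $\tau_{J(U)}$-rigid modules \emph{with $\tau$-rigidity preserved} requires a careful comparison of Auslander--Reiten translates across the reduction --- this is precisely where one exploits that $J(U)$ is a functorially finite wide subcategory with its own well-behaved Auslander--Reiten theory --- and this has to be fitted together with the counting argument for classes (II) and (III). In short, the work is to refine Jasso's $\tau$-tilting reduction from the level of support $\tau$-tilting objects down to the level of individual indecomposable summands, rather than to carry out any single difficult computation.
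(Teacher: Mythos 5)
Before anything else: the paper does not prove this statement. Theorem~\ref{BM_bijection} is quoted verbatim from \cite{BHM_mutation} (and ultimately rests on Jasso's $\tau$-tilting reduction and the Buan--Marsh bijection), and is used in this paper purely as imported background. There is therefore no ``paper's own proof'' to compare your attempt against; I can only judge your outline against the known argument.

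As an outline it is pointed in the right direction: the trichotomy of complements into modules outside $\Gen M$, modules in $\Gen M\setminus\add M$, and shifted projectives $Q[1]$ with $\Hom_\Lambda(Q,M)=0$ is exactly the right decomposition, and your verification that $f_MV\in M^\perp\cap{}^\perp\tau M\cap P^\perp$ for class (I) (via $\Hom_\Lambda(-,\tau M)$ applied to $0\to tV\to V\to f_MV\to 0$ and exactness of $\Hom_\Lambda(P,-)$) is correct and is how one does it. But the two claims that carry all the weight are only announced, not argued. First, that $f_MV$ is indecomposable, $\tau_{J(U)}$-rigid, and that $V\mapsto f_MV$ is a bijection onto the non-shifted part of the target: you propose to ``compare $\tau_{J(U)}(f_MV)$ with $\tau_\Lambda V$,'' but there is no usable formula relating these two translates in general; the actual proof avoids AR translates entirely and instead characterises $\tau$-rigidity by Ext-projectivity in the generated torsion class, then invokes Jasso's bijection between torsion classes of $\modd\Lambda$ sandwiched between $\Gen M$ and ${}^\perp\tau M$ and torsion classes of $J(U)$. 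As stated, your step would fail because $J(U)$ is not closed under $\tau_\Lambda$ and $f_M$ is not exact, so no direct comparison of translates is available. Second, for classes (II) and (III) a ``counting argument'' cannot suffice even for the bare statement: it would at best produce \emph{some} bijection onto $(\proj J(U))[1]$, whereas part (b) pins down the map on shifted projectives (namely $\mathcal{E}_{P[1]}(V)=V$), and the applications in this paper (Propositions~\ref{E-version3.7} and \ref{indprojJ(M)1-1indprojJ(Lambda_otimes_M)}) depend on the explicit correspondence between $\mathcal{P}({}^\perp\tau M)\setminus\mathrm{ind.}\add M$ and $\mathrm{ind.proj}\,J(M)$ given by $f_M$, not merely on equal cardinalities. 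So the proposal is a reasonable roadmap, but the theorem's actual content --- preservation of $\tau$-rigidity under reduction and the explicit identification of the relative projectives --- is left unproved.
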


The bijection described in Theorem \ref{BM_bijection} is compatible with the induction functor in the following way. 

\begin{proposition}[{\cite[Prop. 6.14]{nonis2025tau}}]\label{E-version3.7}
    Let $U = M\oplus P[1]\in \C(kQ)$ be support $\tau$-rigid and let $V\in \ind \C(kQ)$ be such that $U\oplus V$ is also support $\tau$-rigid in $\C(kQ)$. Then, $\Lambda\otimes_{kQ}(U\oplus V)$ is support $\tau$-rigid in $\C(\Lambda)$ and there is an isomorphism
    \begin{equation}\label{E-map}
         \Eps_{\Lambda\otimes_{kQ}U}(\Lambda\otimes_{kQ}V)\cong \Lambda\otimes_{kQ}\Eps_U(V)
    \end{equation}
    in $\C(\Lambda)$. In other words, there is a commutative diagram of bijections 
    \[
\begin{tikzcd}[ampersand replacement=\&, cramped, row sep=large, column sep=scriptsize, 
every matrix/.append style={font=\small}]
{\left\{ V \in\ind\C(kQ)\;\middle|\; \begin{tabular}{@{}l@{}} $V\oplus U$ is support \\ $\tau_{kQ}$-rigid \end{tabular} \right\}} \&\& {\left\{W\in \ind \C(J(U))\;\middle|\; \begin{tabular}{@{}l@{}} $W$ is support \\$\tau_{J(U)}$-rigid \end{tabular} \right\}} \\
{\left\{ Y \in \ind\C(\Lambda) \;\middle|\; \begin{tabular}{@{}l@{}} $Y\oplus(\Lambda\otimes_{kQ}U)$ is \\ support $\tau_{\Lambda}$-rigid \end{tabular} \right\}} \&\& {\left\{Z\in \ind \C(J(\Lambda\otimes_{kQ}U))\;\middle|\; \begin{tabular}{@{}l@{}} $Z$ is support \\$\tau_{J(\Lambda\otimes_{kQ}U)}$-rigid \end{tabular} \right\}}.
\arrow["{\mathcal{E}_U}", from=1-1, to=1-3]
\arrow["{\Lambda\otimes_{kQ}-}"', from=1-1, to=2-1]
\arrow["{\Lambda\otimes_{kQ}-}", from=1-3, to=2-3]
\arrow["{\mathcal{E}_{\Lambda\otimes_{kQ}U}}", from=2-1, to=2-3]
\end{tikzcd}
\]
\end{proposition}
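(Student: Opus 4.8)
The plan is to reduce the isomorphism to the three cases of Theorem~\ref{BM_bijection}, after isolating the one algebraic fact that makes the induction functor behave well: the natural isomorphism $\Hom_\Lambda(\Lambda\otimes_{kQ}A,\Lambda\otimes_{kQ}B)\cong R\otimes_k\Hom_{kQ}(A,B)$, which comes from the adjunction between $\Lambda\otimes_{kQ}-$ and restriction together with $\Lambda\cong(kQ)^{\oplus\dim_k R}$ as a right $kQ$-module. Since $\Lambda$ is free over $kQ$, the induction functor is exact, faithful, and preserves projectives, and restriction also preserves projectives (as $\Lambda$ is projective as a $kQ$-module); combined with the description of Auslander--Reiten translates of induced modules from \cite{nonis2025tau}, the displayed isomorphism yields that $\Lambda\otimes_{kQ}-$ both preserves and reflects support $\tau$-rigidity, sends $\Gen M$ into $\Gen(\Lambda\otimes_{kQ}M)$ and $M^\perp$ into $(\Lambda\otimes_{kQ}M)^\perp$, and carries the torsion pair $(\Gen M,M^\perp)$ to $(\Gen(\Lambda\otimes_{kQ}M),(\Lambda\otimes_{kQ}M)^\perp)$. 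In particular the first assertion is immediate: writing $U\oplus V$ as its module part plus its shifted projective part and applying Proposition~\ref{tau-rigid-Lambda1:1tau-rigid-kQ} and the $\Hom$-isomorphism shows $\Lambda\otimes_{kQ}(U\oplus V)$ is support $\tau$-rigid in $\C(\Lambda)$, whether $V$ is a module or a shifted projective.

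The next step is the bookkeeping remark that induction does not change which clause of Theorem~\ref{BM_bijection} governs $V$: one has $V\in\Gen M\iff\Lambda\otimes_{kQ}V\in\Gen(\Lambda\otimes_{kQ}M)$ (because $\Lambda\otimes_{kQ}-$ commutes with the torsion-free functors $f_M$ and $f_{\Lambda\otimes_{kQ}M}$, see the next paragraph, and is faithful), and $V\in(\proj kQ)[1]\iff\Lambda\otimes_{kQ}V\in(\proj\Lambda)[1]$ (exactness and faithfulness of induction plus restriction preserving projectives). Hence it suffices to verify $\Eps_{\Lambda\otimes_{kQ}U}(\Lambda\otimes_{kQ}V)\cong\Lambda\otimes_{kQ}\Eps_U(V)$ in each of the three cases.

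Case~(b) ($M=0$) is immediate, since both $\Eps_{P[1]}$ and $\Eps_{\Lambda\otimes_{kQ}P[1]}$ act as the identity on the relevant objects. The computation behind cases (a) and (c) is that $\Lambda\otimes_{kQ}-$ commutes with torsion-free functors: for any $X\in\modd kQ$, applying the exact functor $\Lambda\otimes_{kQ}-$ to the canonical sequence $0\to t_MX\to X\to f_MX\to 0$ (with $t_MX$ the trace of $M$ in $X$) yields that $\Lambda\otimes_{kQ}t_MX$ is generated by $\Lambda\otimes_{kQ}M$ (apply $\Lambda\otimes_{kQ}-$ to a surjection $M^{(I)}\twoheadrightarrow t_MX$) while $\Lambda\otimes_{kQ}f_MX\in(\Lambda\otimes_{kQ}M)^\perp$ by the $\Hom$-isomorphism, so $\Lambda\otimes_{kQ}t_MX$ is exactly the trace of $\Lambda\otimes_{kQ}M$ in $\Lambda\otimes_{kQ}X$ and $f_{\Lambda\otimes_{kQ}M}(\Lambda\otimes_{kQ}X)\cong\Lambda\otimes_{kQ}f_MX$. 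In case~(c), where $V\notin\Gen M$ is a module and $\Eps_U(V)=f_MV$, this gives $\Eps_{\Lambda\otimes_{kQ}U}(\Lambda\otimes_{kQ}V)=f_{\Lambda\otimes_{kQ}M}(\Lambda\otimes_{kQ}V)\cong\Lambda\otimes_{kQ}f_MV=\Lambda\otimes_{kQ}\Eps_U(V)$ by Theorem~\ref{BM_bijection}(c). In case~(a), where $\Eps_U(V)=Q[1]$ with $Q$ an indecomposable relative projective of $J(U)$, I would invoke the compatibility of the induction functor with $\tau$-perpendicular subcategories established in \cite{nonis2025tau}: $\Lambda\otimes_{kQ}-$ restricts to a functor $J(U)\to J(\Lambda\otimes_{kQ}U)$ which, under the equivalences $J(U)\simeq\modd kQ'$ and $J(\Lambda\otimes_{kQ}U)\simeq\modd(R\otimes_k kQ')$, is again an induction functor; such a functor sends the indecomposable projective at a vertex to the indecomposable projective at that vertex, hence maps the indecomposable relative projectives of $J(U)$ bijectively onto those of $J(\Lambda\otimes_{kQ}U)$ and preserves the shift, giving $\Eps_{\Lambda\otimes_{kQ}U}(\Lambda\otimes_{kQ}V)=(\Lambda\otimes_{kQ}Q)[1]=\Lambda\otimes_{kQ}(Q[1])$.

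The ``in other words'' reformulation is then automatic: the horizontal maps are the bijections of Theorem~\ref{BM_bijection}, and the vertical maps are bijections by Proposition~\ref{tau-rigid-Lambda1:1tau-rigid-kQ} applied over $kQ$ and over $J(U)$ (every indecomposable $\tau$-rigid or indecomposable projective module over $\Lambda$, resp. over $R\otimes_k kQ'$, is induced), so the square of bijections commutes precisely because of the objectwise isomorphism~\eqref{E-map}. I expect the genuine difficulty to lie entirely in case~(a): everything else is either formal or a short diagram chase, whereas the identification of relative projectives rests on the compatibility of $\Lambda\otimes_{kQ}-$ with Jasso reduction and hence on the structural description of $\tau$-perpendicular subcategories of $\modd\Lambda$ as module categories of algebras of the form $R\otimes_k kQ'$ — this is the technical heart of the argument.
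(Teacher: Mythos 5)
First, a caveat: the paper does not prove Proposition~\ref{E-version3.7} at all --- it is imported verbatim from \cite[Prop.~6.14]{nonis2025tau} --- so there is no in-paper proof to compare against. Judged on its own terms, your reduction to the three cases of Theorem~\ref{BM_bijection} is the natural strategy, and cases (b) and (c) are handled correctly: the $\Hom$-isomorphism $\Hom_\Lambda(\Lambda\otimes_{kQ}A,\Lambda\otimes_{kQ}B)\cong R\otimes_k\Hom_{kQ}(A,B)$, the compatibility $\tau(\Lambda\otimes_{kQ}B)\cong\Lambda\otimes_{kQ}\tau B$, and the identification $f_{\Lambda\otimes_{kQ}M}(\Lambda\otimes_{kQ}X)\cong\Lambda\otimes_{kQ}f_MX$ via uniqueness of the canonical sequence are all sound, and they do yield the first assertion and case (c) (this last identification is essentially what the present paper cites as \cite[Prop.~4.6, Prop.~4.8]{nonis2025tau} in the proof of Proposition~\ref{indprojJ(M)1-1indprojJ(Lambda_otimes_M)}).

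The genuine gap is in case (a), exactly where you locate the technical heart. Your argument establishes two things: that $\Eps_{\Lambda\otimes_{kQ}U}(\Lambda\otimes_{kQ}V)$ lies in $(\proj J(\Lambda\otimes_{kQ}U))[1]$, and that $\Lambda\otimes_{kQ}-$ restricts to a bijection from indecomposable relative projectives of $J(U)$ to those of $J(\Lambda\otimes_{kQ}U)$. From these two facts you conclude ``giving $\Eps_{\Lambda\otimes_{kQ}U}(\Lambda\otimes_{kQ}V)=(\Lambda\otimes_{kQ}Q)[1]$,'' but this does not follow: both sides are shifted indecomposable relative projectives, and the existence of \emph{a} bijection between the two sets of projectives says nothing about whether the two \emph{specific} projectives produced by the Buan--Marsh map correspond under induction. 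Unlike case (c), where $\Eps_U$ is literally the functor $f_M$ and commutation with induction is a functorial statement, on $\Gen M\cup(\proj kQ)[1]$ the assignment $V\mapsto\Eps_U(V)$ is a combinatorial exchange (defined via completion to support $\tau$-tilting objects / minimal approximations), not the application of a functor to $V$; note also that Proposition~\ref{indprojJ(M)1-1indprojJ(Lambda_otimes_M)} concerns the Bongartz-completion summands $\P({}^\perp\tau M)\setminus\mathrm{ind.}\add(M)$, which is a different set from $\ind(\Gen M)$. To close the gap you must unwind the actual definition of $\Eps_U$ on the $\Gen M$ and shifted-projective parts and verify, step by step (minimal $\add$-approximations, cokernels, supports), that each ingredient commutes with $\Lambda\otimes_{kQ}-$; the exactness of induction and Lemma~\ref{Gen_Lemma} make this plausible, but it is precisely the content that a proof of \eqref{E-map} cannot omit.
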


Recall that $\mathcal{P}(^\perp\tau M)$ is the full subcategory of $^\perp\tau M$ consisting of $\Ext$-projective modules in $^\perp\tau M$, i.e. the modules $X$ in $^\perp\tau M$ such that $\Ext^1_\Lambda(X,{^\perp\tau}M) = 0$. The \textit{Bongartz completion} of $M$, denoted by $B_M$, is given by the direct sum of all indecomposable $\Ext$-projective modules in $^\perp\tau M$. 

\begin{proposition}\label{indprojJ(M)1-1indprojJ(Lambda_otimes_M)}
    Let $M\in \modd kQ$ be $\tau$-rigid. Then, there is a commutative diagram of bijections
    \[
        \begin{tikzcd}[ampersand replacement=\&, cramped, row sep=large, column sep=scriptsize, 
        every matrix/.append style={font=\small}]
        \mathcal{P}(^\perp\tau M)\setminus \mathrm{ind.}\add(M) 
        \&\& \mathrm{ind.proj}J(M) \\
        \mathcal{P}(^\perp\tau(\Lambda\otimes_{kQ}M))\setminus \mathrm{ind.}\add(\Lambda\otimes_{kQ}M)
        \&\& \mathrm{ind.proj}J(\Lambda\otimes_{kQ}M).
        \arrow["{\mathcal{E}_M}", from=1-1, to=1-3]
        \arrow["{\Lambda\otimes_{kQ}-}"', from=1-1, to=2-1]
        \arrow["{\Lambda\otimes_{kQ}-}", from=1-3, to=2-3]
        \arrow["{\mathcal{E}_{\Lambda\otimes_{kQ}M}}", from=2-1, to=2-3]
        \end{tikzcd}
    \]

\end{proposition}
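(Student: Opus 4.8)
The plan is to deduce this from Proposition~\ref{E-version3.7} by specializing the bijection $\Eps_U$ to the case where $U = M$ is a $\tau$-rigid $kQ$-module and by identifying the relevant subsets on both sides. First I would recall, from Theorem~\ref{BM_bijection}(a), that the objects $V\in\ind\C(kQ)$ with $V\oplus M$ support $\tau$-rigid that are sent by $\Eps_M$ into $(\proj J(M))[1]$ are precisely those with $V\in\Gen M$ or $V\in(\proj kQ)[1]$; removing from this set the indecomposable summands of $M$ itself and the ``shift'' part should leave exactly $\mathcal{P}({}^\perp\tau M)\setminus\mathrm{ind.}\add(M)$. This identification is the content of the classical description (going back to \cite{Jasso_Reduction}) of relative projectives in $J(M)$: the indecomposable projectives of $J(M)$ correspond bijectively, via $\Eps_M$ (equivalently $f_M$ on the module part, by Theorem~\ref{BM_bijection}(c)), to the indecomposable $\Ext$-projectives of ${}^\perp\tau M$ that are not summands of $M$. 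So the top row of the claimed diagram is just the restriction of $\Eps_M$ to this distinguished subset, and it lands in $\mathrm{ind.proj}\,J(M)$ by construction.

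Next I would do the same identification one level down, for $\Lambda\otimes_{kQ}M$, which is $\tau$-rigid in $\modd\Lambda$ by Proposition~\ref{tau-rigid-Lambda1:1tau-rigid-kQ}. The same classical description gives that $\mathcal{E}_{\Lambda\otimes_{kQ}M}$ restricts to a bijection $\mathcal{P}({}^\perp\tau(\Lambda\otimes_{kQ}M))\setminus\mathrm{ind.}\add(\Lambda\otimes_{kQ}M)\to\mathrm{ind.proj}\,J(\Lambda\otimes_{kQ}M)$, which is the bottom row. The two vertical maps are the induction functor; the right-hand vertical is well-defined and a bijection because $\Lambda\otimes_{kQ}-$ sends $J(M)$ equivalently onto $J(\Lambda\otimes_{kQ}M)\simeq\modd(R\otimes kQ')$ (cf.\ the results recalled in the Introduction and in \cite{nonis2025tau}), hence sends indecomposable projectives of $J(M)$ to indecomposable projectives of $J(\Lambda\otimes_{kQ}M)$; the left-hand vertical is a bijection because induction preserves and reflects $\tau$-rigidity (Proposition~\ref{tau-rigid-Lambda1:1tau-rigid-kQ}) and preserves the relevant $\Ext$-vanishing, so it matches $\Ext$-projectives of ${}^\perp\tau M$ with those of ${}^\perp\tau(\Lambda\otimes_{kQ}M)$.

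Finally, commutativity of the square is immediate: it is simply the restriction of the commutative square in Proposition~\ref{E-version3.7} to the distinguished subsets just described, once we check that all four maps in that proposition restrict to the four maps here --- i.e.\ that under the horizontal maps $\Eps_M$ and $\mathcal{E}_{\Lambda\otimes_{kQ}M}$ the ``relative-projective-image'' subset on the left corresponds to the ind.-proj.\ subset on the right (which is Theorem~\ref{BM_bijection}(a) applied on each level), and that the vertical induction maps respect these subsets (which is the previous paragraph). Then the isomorphism $\Eps_{\Lambda\otimes_{kQ}M}(\Lambda\otimes_{kQ}V)\cong\Lambda\otimes_{kQ}\Eps_M(V)$ of \eqref{E-map} gives exactly the commutativity asserted here.

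The main obstacle is purely bookkeeping: carefully matching the subset $\{V : V\in\Gen M \text{ or } V\in(\proj kQ)[1]\}$ appearing in Theorem~\ref{BM_bijection}(a) with $\mathcal{P}({}^\perp\tau M)\setminus\mathrm{ind.}\add(M)$, i.e.\ recalling why the $\Ext$-projectives of ${}^\perp\tau M$ other than the summands of $M$ are precisely the modules in $\Gen M$ (together with, on the derived side, the shifted projectives that vanish on $M$) that remain after removing $\add M$, and then verifying that the induction functor is compatible with both this identification and with the passage to $J(-)$. None of these steps is deep in the present setting --- they are consequences of Proposition~\ref{tau-rigid-Lambda1:1tau-rigid-kQ}, Proposition~\ref{pd_preserved}, and Proposition~\ref{E-version3.7} --- but stating the identification of the relative-projective subsets precisely is the only point requiring care.
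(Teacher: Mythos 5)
Your proposal follows essentially the same route as the paper's proof: the horizontal arrows are the known restriction of $\mathcal{E}_M=f_M$ to the Ext-projectives of ${}^\perp\tau M$ outside $\add M$ (the paper cites \cite[Lemma 4.9]{tauExcSeq_BM} for this), the vertical arrows are bijections because induction is compatible with Ext-projectives of ${}^\perp\tau(-)$ and with relative projectives of $J(-)$ (the paper cites \cite[Prop. 4.6 and Prop. 4.8]{nonis2025tau}), and commutativity is the restriction of the square in Proposition \ref{E-version3.7}. So the overall argument is correct and matches the paper.

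One supporting sentence is off, however. Theorem \ref{BM_bijection}(a) characterizes the $V$ with $\mathcal{E}_M(V)\in(\proj J(M))[1]$, i.e.\ the objects sent to \emph{shifted} relative projectives; these are exactly the $V\in\Gen M$ together with the shifted projectives, and removing $\mathrm{ind.}\add(M)$ and the shift part from that set does \emph{not} leave $\mathcal{P}({}^\perp\tau M)\setminus\mathrm{ind.}\add(M)$ --- it leaves its complement inside $\Gen M$. The set you actually need, $\mathcal{P}({}^\perp\tau M)\setminus\mathrm{ind.}\add(M)$, is disjoint from $\Gen M$ (an indecomposable Ext-projective of ${}^\perp\tau M$ lying in $\Gen M$ would have $f_M$-image zero), and it is carried by $f_M$ onto the honest, unshifted indecomposable projectives of $J(M)$; that statement is Jasso's description of relative projectives, i.e.\ \cite[Lemma 4.9]{tauExcSeq_BM}, which you correctly invoke in the following sentence. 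Since the rest of your argument only uses that correct statement, the proof goes through once the appeal to Theorem \ref{BM_bijection}(a) is replaced by the citation of the lemma.
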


\begin{proof}
The horizontal arrows are bijections by \cite[Lemma 4.9]{tauExcSeq_BM}. Since $\Eps_M = f_M$ and $\Eps_{\Lambda\otimes_{kQ}M} = f_{\Lambda\otimes_{kQ}M}$ by Theorem \ref{BM_bijection}(c), the result follows from \cite[Prop. 4.6 and Prop. 4.8]{nonis2025tau}.
\end{proof}

A $\tau$-perpendicular category $J(U)$ is equivalent to the module category of a finite dimensional algebra $\Gamma_U$. The next important result provides a precise description of the algebra $\Gamma_U$ when $J(U)$ is a $\tau$-perpendicular subcategory of $\modd{\Lambda}$, where $\Lambda = R\otimes kQ$. 

\begin{theorem}[{\cite[Thm. 4.10]{nonis2025tau}}]\label{J(Lambda_otimes_kQ)=mod(R_otimes_kQ')}
      Let $M$ be a basic $\tau$-rigid $kQ$-module and let $\Lambda\otimes_{kQ}M$ be the corresponding basic $\tau$-rigid $\Lambda$-module. Then, $J(\Lambda\otimes_{kQ}M)\simeq \modd\Gamma_{\Lambda\otimes_{kQ}M}$, with $$ \Gamma_{\Lambda\otimes_{kQ}M} \cong R\otimes \Gamma_M \quad \text{and} \quad \Gamma_M = kQ_M$$
      for an acyclic quiver $Q_M$. 
\end{theorem}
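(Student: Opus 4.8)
The plan is to reduce the claim to a computation of the endomorphism algebra of a projective generator of $J(\Lambda\otimes_{kQ}M)$, and to carry that out using the description of $\modd\Lambda$ as the category of representations of $Q$ over $\modd R$. First consider the hereditary side: since $k$ is algebraically closed and $kQ$ is hereditary, $M$ is rigid, hence (being of projective dimension at most one) a partial tilting $kQ$-module, and $J(M)$ coincides with the Geigle--Lenzing perpendicular category of $M$, as recalled in Section \ref{sec1}. By \cite{Geigle_Lenzing_Perp_Subcat} it is equivalent to $\modd\Gamma_M$ for a finite-dimensional \emph{hereditary} $k$-algebra $\Gamma_M$ with $n-\abs{M}$ simple modules; since $k$ is algebraically closed, $\Gamma_M\cong kQ_M$ for a quiver $Q_M$ with no loops or oriented cycles. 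Fix a basic projective generator $P_M$ of $J(M)$, so that $\Gamma_M\cong\End_{kQ}(P_M)^{\mathrm{op}}$.

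Next, transport $P_M$ through the induction functor. By Proposition \ref{indprojJ(M)1-1indprojJ(Lambda_otimes_M)}, $\Lambda\otimes_{kQ}-$ restricts to a bijection from $\mathrm{ind.proj}\,J(M)$ onto $\mathrm{ind.proj}\,J(\Lambda\otimes_{kQ}M)$; being additive and injective on isomorphism classes of indecomposables, it sends the basic projective generator $P_M$ to a basic projective generator $\Lambda\otimes_{kQ}P_M$ of $J(\Lambda\otimes_{kQ}M)$. (Recall that $P_M$, being $\tau_{J(M)}$-rigid in the functorially finite wide subcategory $J(M)$, is rigid and hence $\tau$-rigid in $\modd kQ$, so Proposition \ref{tau-rigid-Lambda1:1tau-rigid-kQ} applies to its indecomposable summands.) We may therefore take $\Gamma_{\Lambda\otimes_{kQ}M}\cong\End_\Lambda(\Lambda\otimes_{kQ}P_M)^{\mathrm{op}}$, and it remains to compute this endomorphism algebra.

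Identify $\modd\Lambda$ with the category of representations of $Q$ over $\modd R$; under this identification $\Lambda\otimes_{kQ}X$ is obtained from $X$ by applying $R\otimes_k-$ at each vertex and $\mathrm{id}_R\otimes-$ to each structure map. Using the adjunction between $\Lambda\otimes_{kQ}-$ and restriction along $kQ\hookrightarrow\Lambda$, $q\mapsto 1_R\otimes q$, together with the canonical isomorphism $\Hom_{kQ}(X,R\otimes_k Y)\cong R\otimes_k\Hom_{kQ}(X,Y)$ (valid because $R$ is finite-dimensional free over $k$), one obtains a $k$-linear isomorphism $\End_\Lambda(\Lambda\otimes_{kQ}X)\cong R\otimes_k\End_{kQ}(X)$; evaluating on representatives $\sum_i r_i\otimes\varphi_i$ shows it is multiplicative, with $R\otimes 1$ central. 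Taking $X=P_M$, passing to opposite algebras, and using that $R$ is commutative (so $(R\otimes A)^{\mathrm{op}}\cong R\otimes A^{\mathrm{op}}$) gives
\[
\Gamma_{\Lambda\otimes_{kQ}M}\;\cong\;\bigl(R\otimes_k\End_{kQ}(P_M)\bigr)^{\mathrm{op}}\;\cong\;R\otimes_k\End_{kQ}(P_M)^{\mathrm{op}}\;\cong\;R\otimes\Gamma_M\;\cong\;R\otimes kQ_M,
\]
which is the assertion. The main obstacle is precisely this last step: one must verify that the dimension-matching isomorphism $\End_\Lambda(\Lambda\otimes_{kQ}X)\cong R\otimes\End_{kQ}(X)$ is an isomorphism of $k$-algebras compatible with the quiver grading, so that the tensor factor $R$ genuinely splits off; the other inputs are already available from the results recorded above and the classical theory of perpendicular categories over hereditary algebras.
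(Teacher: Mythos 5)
This statement is imported from \cite[Thm.~4.10]{nonis2025tau} and the present paper gives no proof of it, so there is no in-paper argument to compare against line by line; judged on its own, your proposal is correct and uses precisely the ingredients the paper records around the statement. Your route --- identify $\Gamma_M$ with $\End_{kQ}(P_M)^{\mathrm{op}}$ for a basic projective generator $P_M$ of the Geigle--Lenzing category $J(M)$, transport $P_M$ to a projective generator of $J(\Lambda\otimes_{kQ}M)$ via the bijection of Proposition~\ref{indprojJ(M)1-1indprojJ(Lambda_otimes_M)}, and compute $\End_\Lambda(\Lambda\otimes_{kQ}P_M)\cong R\otimes\End_{kQ}(P_M)$ --- is consistent with, and likely reconstructs, the original argument: the endomorphism-algebra identity you flag as the main obstacle is exactly the one the paper itself uses freely (e.g.\ in the proof of Proposition~\ref{tau-exc=R-exc}), and it holds as an algebra isomorphism because $R\otimes\End_{kQ}(X)\to\End_\Lambda(R\otimes X)$, $r\otimes\varphi\mapsto (r\cdot{-})\otimes\varphi$, is an injective algebra map whose source and target have equal dimension by the induction--restriction adjunction. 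The only caveat worth recording is that Proposition~\ref{indprojJ(M)1-1indprojJ(Lambda_otimes_M)} is itself proved here by citing \cite[Prop.~4.6, Prop.~4.8]{nonis2025tau}, so one should check in that reference that those propositions do not already depend on Theorem~4.10; within the logic visible in this paper there is no circularity.
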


Let $\T\subseteq \modd\Lambda$ be a functorially finite torsion class. Recall from \cite{Noncrossing_partitions_and_representations_of_quivers,Torsion-classes-wide-subcategories-and-localisations} that the \emph{left finite wide subcategory} of $\modd\Lambda$ corresponding to $\T$ is 
$$\W_L(\T):= \{X\in \T \mid \forall (Y\in \T, g: Y\to X), \ker(g)\in \T\}.$$ Dually, given a functorially finite torsion-free class $\F\subseteq \modd\Lambda$, the \emph{right finite wide subcategory} of $\modd\Lambda$ corresponding to $\F$ is 
$$\W_R(\F):= \{X\in \T \mid \forall (Y\in \T, g:  X\to Y), \coker(g)\in \F\}.$$
In particular, every left and right finite wide subcategory of $\modd\Lambda$ is a $\tau$-perpendicular subcategory \cite[Lemma 4.3, Prop. 6.15(i)]{tau-perpendicular_wide_subategories}.

Now, let $\Lambda = R\otimes kQ$. We conclude this section by recalling the relationship between $\tau$-perpendicular subcategories of $\modd kQ$ and $\tau$-perpendicular subcategories of $\modd{\Lambda}$. 

\begin{theorem}[{\cite[Prop. 6.25, Cor. 6.26]{nonis2025tau}}]\label{bijection_of_tau-perp_subcategories}
    Let $\Lambda = R\otimes kQ$. The following statements hold. 
    \begin{enumerate}
        \item[(i)]For a basic $\tau$-rigid $kQ$-module $M$, the assignment $J(M)\mapsto J(\Lambda\otimes_{kQ}M)$ defines a bijection between the $\tau$-perpendicular subcategories of $\modd{kQ}$ and $\modd{\Lambda}$; 
        \item[(ii)] The $\tau$-perpendicular, left finite wide, and right finite wide subcategories of $\modd\Lambda$ coincide. 
    \end{enumerate}
\end{theorem}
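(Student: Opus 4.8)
The plan is to prove Theorem \ref{bijection_of_tau-perp_subcategories} by bootstrapping off the machinery already assembled in this section, rather than re-deriving the structure of $\tau$-perpendicular subcategories from scratch.

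For part (i), the first step is to recall from \cite{nonis2025tau} (and the Remark following the Definition above) that every $\tau$-perpendicular subcategory of $\modd\Lambda$ is of the form $J(N)$ for some basic $\tau$-rigid $\Lambda$-module $N$, and by Proposition \ref{tau-rigid-Lambda1:1tau-rigid-kQ} every such $N$ is $\Lambda\otimes_{kQ}M$ for a unique basic $\tau$-rigid $kQ$-module $M$. So the assignment $J(M)\mapsto J(\Lambda\otimes_{kQ}M)$ is at least surjective onto the $\tau$-perpendicular subcategories of $\modd\Lambda$, and well-definedness plus injectivity will follow once we know that $J(M) = J(M')$ for basic $\tau$-rigid $kQ$-modules if and only if $J(\Lambda\otimes_{kQ}M) = J(\Lambda\otimes_{kQ}M')$. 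The clean way to get this is to observe that for a basic $\tau$-rigid $kQ$-module, $J(M)$ already determines $M$ up to a choice of Bongartz-type data (indeed $M$ is recovered as $\mathrm{ind.proj}$ of a reduction, or more simply: two $\tau$-rigid modules with the same $\tau$-perpendicular category and the same number of summands coincide over $kQ$ since $kQ$ is hereditary), so the map $M\mapsto J(M)$ is already a bijection between basic $\tau$-rigid $kQ$-modules (up to the relevant equivalence) and $\tau$-perpendicular subcategories of $\modd kQ$; composing with the bijection of Proposition \ref{tau-rigid-Lambda1:1tau-rigid-kQ} at the level of modules and checking functoriality gives (i). The engine that makes the diagram commute — i.e. that $\Lambda\otimes_{kQ}-$ sends $J(M)$ into $J(\Lambda\otimes_{kQ}M)$ compatibly — is Proposition \ref{E-version3.7} together with Theorem \ref{J(Lambda_otimes_kQ)=mod(R_otimes_kQ')}, which pin down both the objects and the ambient algebra $R\otimes kQ_M$ of the target category. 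Counting simples via $\abs{\Lambda\otimes_{kQ}M} = \abs{M}$ (Proposition \ref{tau-rigid-Lambda1:1tau-rigid-kQ}) confirms the two sides have the same "rank", which rules out any collapsing.

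For part (ii), the strategy is inclusion-chasing among three families of subcategories. Every left finite and every right finite wide subcategory of any finite dimensional algebra is $\tau$-perpendicular by \cite[Lemma 4.3, Prop. 6.15(i)]{tau-perpendicular_wide_subategories}, as already noted, so it remains to prove the reverse: every $\tau$-perpendicular subcategory $\W$ of $\modd\Lambda$ is left finite (and, dually, right finite). By part (i), $\W = J(\Lambda\otimes_{kQ}M)$; over the hereditary algebra $kQ$ the category $J(M)$ is automatically left finite and right finite (for hereditary algebras all three notions agree — this is classical, since $\tau$-tilting and support tilting coincide and wide = perpendicular). The point is then to transport left/right finiteness along the induction functor. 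Here I would use the correspondence between functorially finite torsion classes of $\modd kQ$ and those of $\modd\Lambda$ induced by $\Lambda\otimes_{kQ}-$ (this is part of the package in \cite{nonis2025tau}, essentially because $\Lambda\otimes_{kQ}-$ is exact and sends $\Gen$ to $\Gen$, with the torsion-free functor $f_M$ commuting with induction as used in Proposition \ref{indprojJ(M)1-1indprojJ(Lambda_otimes_M)}); then show directly that $\W_L$ of the torsion class $\Lambda\otimes_{kQ}\T$ equals $\Lambda\otimes_{kQ}\W_L(\T)$, by checking that kernels of maps in $\Lambda\otimes_{kQ}\T$ can be computed on the $kQ$-side using exactness of induction and faithful-flatness of $\Lambda$ over $kQ$. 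Combining: $\W = \Lambda\otimes_{kQ}J(M) = \Lambda\otimes_{kQ}\W_L(\T_M) = \W_L(\Lambda\otimes_{kQ}\T_M)$ is left finite, and dually right finite.

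The main obstacle I anticipate is the torsion-theoretic transport in part (ii): making precise that induction induces a bijection of functorially finite torsion classes that intertwines the $\W_L$ and $\W_R$ operators. Exactness of $\Lambda\otimes_{kQ}-$ handles kernels and cokernels, but one must be careful that a map $g\colon Y\to X$ with $Y,X\in\Lambda\otimes_{kQ}\T$ need not itself be in the image of induction, so the condition "$\ker g\in\T$" has to be verified against \emph{all} such $g$, not just induced ones; the way around this is presumably to use that $\Lambda$ is free (hence faithfully flat) as a $kQ$-module, so $\ker g$, viewed as a $kQ$-module via restriction, still lies in $\T$ when $\T$ is restriction-closed in the appropriate sense — and this closure property is exactly what the explicit description of induced torsion classes in \cite{nonis2025tau} supplies. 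Everything else is bookkeeping with the already-established bijections $\mathcal{E}$, Proposition \ref{E-version3.7}, and Theorem \ref{J(Lambda_otimes_kQ)=mod(R_otimes_kQ')}.
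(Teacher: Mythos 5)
First, a structural point: the paper does not prove this theorem at all --- it is imported verbatim from \cite[Prop.~6.25, Cor.~6.26]{nonis2025tau} --- so there is no in-paper argument to compare yours against. Judged on its own terms, your proposal has a genuine gap in each part. In part (i), you make well-definedness and injectivity of $J(M)\mapsto J(\Lambda\otimes_{kQ}M)$ rest on the claim that, over the hereditary algebra $kQ$, a basic $\tau$-rigid module is determined by its $\tau$-perpendicular category together with its number of indecomposable summands. This is false: any two non-isomorphic tilting $kQ$-modules $T$ and $T'$ satisfy $|T|=|T'|=n$ and $J(T)=0=J(T')$. The rank count you invoke at the end only shows the two sides have matching numbers of simples; it does not rule out two distinct subcategories of $\modd kQ$ having the same image. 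What is actually needed (and what \cite{nonis2025tau} supplies) is an intrinsic description of $J(\Lambda\otimes_{kQ}M)$ in terms of the subcategory $J(M)$ alone --- for instance, that $J(\Lambda\otimes_{kQ}M)$ is recovered from its induced objects, which are exactly the $\Lambda\otimes_{kQ}X$ with $X\in J(M)$ --- so that the assignment genuinely factors through the subcategory rather than through a choice of $M$.

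In part (ii) you correctly isolate the hard step --- transporting left/right finiteness along induction when morphisms between induced modules need not themselves be induced --- but you resolve it only with ``presumably,'' and the restriction-of-scalars patch you sketch conflates membership of $\ker g$ in a torsion class of $\modd\Lambda$ with membership of its restriction in a torsion class of $\modd kQ$; these are different conditions and the implication you need is not established. A direct argument avoids the transport entirely: by Theorem \ref{J(Lambda_otimes_kQ)=mod(R_otimes_kQ')}, the subcategory $\W=J(\Lambda\otimes_{kQ}M)$ has a progenerator induced from a rigid (hence $\tau$-rigid) $kQ$-module, which is therefore $\tau$-rigid in $\modd\Lambda$; every object of $\W$ is a quotient of a power of this progenerator, so $\T(\W)$ coincides with $\Gen$ of a $\tau$-rigid $\Lambda$-module and is functorially finite, and together with the identity $\W=\W_L(\T(\W))$ for $\tau$-perpendicular subcategories this gives left finiteness (dually, right finiteness). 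As written, neither part of your proposal closes.
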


\section{$\tau$-exceptional sequences}\label{sec2}

Exceptional sequences were introduced in the representation theory of hereditary algebras by Crawley-Boevey \cite{Boevey-ExcSeq} and Ringel \cite{braid_group_action_on_exc_seq_Ringel}. When the length of such a sequence equals the number of simple modules, the sequence is said to be \textit{complete}. Complete exceptional sequences always exist over hereditary algebras, although this is not always the case for non-hereditary algebras. Buan and Marsh extended this concept by defining $\tau$-exceptional sequences \cite{tauExcSeq_BM} which generalize classical exceptional sequences. In particular, complete $\tau$-exceptional sequences always exist over an arbitrary finite dimensional algebra.

 In this section, we recall the definition of a ($\tau$-)exceptional sequence and review a bijection established in \cite[Section 5]{nonis2025tau} between ($\tau$-)exceptional sequences in $\modd{kQ}$ and $\tau$-exceptional sequences in $\modd{\Lambda}$, where $\Lambda = R\otimes kQ$. 

\begin{definition}[{\cite{Boevey-ExcSeq}}]
    Let $\Lambda$ be a finite dimensional algebra. For a positive integer $t$, an ordered $t$-tuple of indecomposable $\Lambda$-modules $(M_1,\cdots, M_t)$ in $\modd\Lambda$ is called \emph{exceptional} if the following conditions hold. 
    \begin{enumerate}
        \item[(a)] $\End_\Lambda(M_i)\cong k$ for $1\leq i\leq t$;
        \item[(b)]  $\Ext_\Lambda^{\geq 1}(M_i,M_i) = 0$ for $1\leq i \leq t$; 
        \item[(c)]  $\Hom_\Lambda(M_i,M_j) = 0 = \Ext_\Lambda^{\geq 1}(M_i,M_j)$ for $1\leq j < i \leq t$.  
    \end{enumerate}
    If $t = \abs\Lambda$, the sequence is said to be \emph{complete}.
\end{definition}

\begin{definition}[{\cite[Def. 1.3]{tauExcSeq_BM}}]
    Let $\Lambda$ be a finite dimensional algebra. For a positive integer $t$, an ordered $t$-tuple of indecomposable $\Lambda$-modules $(M_1,\cdots, M_t)$ is a \emph{$\tau$-exceptional sequence} if 
    \begin{enumerate}
        \item[(a)] $M_t$ is $\tau$-rigid in $\modd\Lambda$, and
        \item[(b)] $(M_1,\cdots, M_{t-1})$ is a $\tau$-exceptional sequence in $J(M_t)$.
    \end{enumerate}
    If $t=\abs{\Lambda}$, the sequence is said to be \emph{complete}.  
\end{definition}  

Recall that for $M, N\in\modd \Lambda$ there is a functorial isomorphism (AR duality)
\begin{equation*}
    \Ext_\Lambda^1(M,N) \cong D\overline{\Hom}_\Lambda(N,\tau M).
\end{equation*}

 In particular, if $\pd M \leq 1$ we have that 
 \begin{equation}\label{AR_Duality_pdM<=1}
      \Ext_\Lambda^1(M,N) \cong D\Hom_\Lambda(N,\tau M).
 \end{equation}

\begin{remark}\label{tau-exc=exc_over_hereditary}
    Let $\Lambda$ be a hereditary algebra. By the AR duality \eqref{AR_Duality_pdM<=1}, ${^\perp}(\tau X)$ consists of all the $\Lambda$-modules $Y$ such that $\Ext^1_\Lambda(X,Y) = 0$. Hence, given a $\tau$-rigid $\Lambda$-module $X$ we have 
    $J(X) = X^{\perp}\cap {^\perp \tau X} = X^{\perp_{0,1}}$, where $$X^{\perp_{0,1}} = \{Y\in\modd\Lambda \mid \Hom_\Lambda(X,Y) = 0 = \Ext^1_\Lambda(X,Y)\}.$$
    In other words, $\tau$-perpendicular and Geigle-Lenzing perpendicular subcategories \cite{Geigle_Lenzing_Perp_Subcat} of $\modd{\Lambda}$ coincide. As a result, $\tau$-exceptional and exceptional sequences also coincide over hereditary algebras; see \cite{tauExcSeq_BM}. 
\end{remark}

We recall the following uniqueness property of $\tau$-exceptional sequences. 

\begin{theorem}[{\cite[Thm. 8]{uniqueness_of_tau_exc_seq}}]\label{uniqueness_property_of_tau_exc_seq}
    Let $\Lambda$ be a finite dimensional algebra. Let $(X_1,\cdots,X_n)$ and $(Y_1,,\cdots, Y_n)$ be complete $\tau$-exceptional sequences in $\modd \Lambda$. If there exists an index $j$ such that $X_i\cong X_i$ for $i\neq j$, then also $X_j \cong X_j$. 
\end{theorem}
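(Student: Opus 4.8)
The plan is to argue by induction on $n=\abs{\Lambda}$, treating the cases $j<n$ and $j=n$ separately. The base case $n=1$ is immediate: over a local algebra the only torsion classes are $0$ and $\modd\Lambda$, so $\Lambda$ is the unique indecomposable $\tau$-rigid module and $(\Lambda)$ the unique complete $\tau$-exceptional sequence, whence the statement holds trivially. For the inductive step we may assume the result for all algebras with fewer than $n$ simple modules.

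If $j<n$ then $X_n\cong Y_n=:N$, and by definition $(X_1,\dots,X_{n-1})$ and $(Y_1,\dots,Y_{n-1})$ are complete $\tau$-exceptional sequences in $J(N)\simeq\modd\Gamma$ with $\abs{\Gamma}=n-1$. Since these agree in every position $i\neq j$ with $i\le n-1$, the inductive hypothesis applied to $\Gamma$ gives $X_j\cong Y_j$. All the content therefore lies in the case $j=n$.

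So assume $(X_1,\dots,X_{n-1},X_n)$ and $(X_1,\dots,X_{n-1},Y_n)$ are complete $\tau$-exceptional sequences; we must show $X_n\cong Y_n$. The first step I would take is to prove that the common truncation determines its ambient $\tau$-perpendicular subcategory, that is, $J(X_n)=J(Y_n)$. This I would deduce from the following general fact, established by induction on length: a complete $\tau$-exceptional sequence $(M_1,\dots,M_t)$ in a $\tau$-perpendicular subcategory $\mathcal{W}$ generates $\mathcal{W}$ as a wide subcategory. Indeed, $J_{\mathcal{W}}(M_t)$ is again a $\tau$-perpendicular subcategory, of corank one less than $\mathcal{W}$, and by the inductive hypothesis $M_1,\dots,M_{t-1}$ generate it; since $M_t\notin J_{\mathcal{W}}(M_t)$ (because $\End(M_t)\neq0$) and the classes $[M_1],\dots,[M_t]$ are linearly independent in $K_0(\mathcal{W})$ (which follows by the same induction, using that the class group of a $\tau$-perpendicular subcategory embeds in $K_0(\mathcal{W})$ as a direct summand not containing $[M_t]$), the wide subcategory they generate has as many simple objects as $\mathcal{W}$ and hence coincides with it. Applying this with $\mathcal{W}=J(X_n)$ and with $\mathcal{W}=J(Y_n)$, both equal the smallest wide subcategory of $\modd\Lambda$ containing $X_1,\dots,X_{n-1}$, so $J(X_n)=J(Y_n)$.

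We have thereby reduced the theorem to the assertion that a corank-one $\tau$-perpendicular subcategory of $\modd\Lambda$ equals $J(M)$ for at most one indecomposable $\tau$-rigid module $M$ --- equivalently, that $J(M)=J(M')$ forces $M\cong M'$ for indecomposable $\tau$-rigid $M,M'$. (Conversely this assertion implies the theorem, since every $J(X_n)$ carries complete $\tau$-exceptional sequences, so nothing is lost in the reduction.) I expect this to be the main obstacle. To prove it I would work with the $g$-vector fan of $\Lambda$: the module $M$ is encoded by the ray $\mathbb{R}_{\ge0}\,g(M)$, which is the intersection of the maximal cones attached to the support $\tau$-tilting modules having $M$ as a direct summand, while $J(M)\simeq\modd\Gamma_M$ is recovered from the fan structure in the star of that ray; alternatively one can phrase the argument through the torsion class ${}^{\perp}\tau M=\Gen B_M$ of the Bongartz completion $B_M$ --- which $J(M)$ controls via the description of its relative projectives in \cite{tauExcSeq_BM} --- together with the interval it cuts out in the lattice of torsion classes. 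Extracting $M$ from this local data is the delicate point; granting it, the induction closes.
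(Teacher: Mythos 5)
Note first that the paper does not prove this statement at all: it is imported as \cite[Thm.~8]{uniqueness_of_tau_exc_seq} and used as a black box, so there is no in-paper argument to compare against; your proposal has to stand on its own. The scaffolding is fine --- the induction on $\abs{\Lambda}$, the base case (a local algebra has $\Lambda$ as its only indecomposable $\tau$-rigid module), and the case $j<n$ (pass to $J(X_n)\simeq\modd\Gamma$ with $\abs{\Gamma}=n-1$ and invoke the inductive hypothesis) are all correct. But everything after that reduction is asserted rather than proved, and the two assertions you lean on are precisely where the content of the theorem sits.

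Concretely: (i) your claim that a complete $\tau$-exceptional sequence in $\W$ generates $\W$ as a wide subcategory rests on ``the classes $[M_1],\dots,[M_t]$ are linearly independent in $K_0(\W)$, so the generated wide subcategory has as many simple objects as $\W$ and hence coincides with it.'' Neither implication is justified. The smallest wide subcategory of $\W$ containing a given collection of modules need not be functorially finite or $\tau$-perpendicular, the relation between its simple objects and the classes $[M_i]$ is not controlled, and ``a wide subcategory whose $K_0$ has full rank inside $K_0(\W)$ equals $\W$'' is not a theorem you can appeal to without proof; the linear-independence claim itself is also only gestured at via an unproved statement about class groups of $\tau$-perpendicular subcategories. (ii) The decisive step --- that $J(M)=J(M')$ forces $M\cong M'$ for indecomposable $\tau$-rigid modules --- is not argued at all: you name possible tools ($g$-vector fans, the Bongartz completion, the interval $[\Gen M,{}^\perp\tau M]$ in the lattice of torsion classes) and then write ``granting it, the induction closes.'' As you yourself observe, the theorem immediately implies (ii) (append $M$ and $M'$ to any complete $\tau$-exceptional sequence in the common perpendicular category), and (ii) together with (i) gives the $j=n$ case; so granting (ii) without proof concedes essentially the whole remaining difficulty. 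In its current form the proposal is a correct reduction of the theorem to two unproved claims, one of which is the hard part of the theorem in disguise, and therefore does not constitute a proof.
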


Now, let $\Lambda = R\otimes kQ$. Let $t$ be a positive integer and let $\M = (M_1,\cdots, M_t)$ be a ($\tau$-)exceptional sequence in $\modd{kQ}$. 
Then, 
\begin{equation}\label{ind_functor(exc_seq)}
    \Lambda\otimes_{kQ}\M := (\Lambda\otimes_{kQ}M_1,\cdots, \Lambda\otimes_{kQ}M_t)
\end{equation}
is a $\tau$-exceptional sequence in $\modd{\Lambda}$ by \cite[Prop. 5.8]{nonis2025tau}. In particular, every $\tau$-exceptional sequence in $\modd{\Lambda}$ arizes in this way. 

\begin{theorem}[{\cite[Thm. 5.10]{nonis2025tau}}]\label{bijection_of_tau_exc_seq}
    Let $t$ be a positive integer. Then, there is a bijection 
    $$\Lambda\otimes_{kQ}-: {\left\{ \begin{tabular}{@{}l@{}} ($\tau$-)exc. seq. in $\modd{kQ}$ \\ \multicolumn{1}{c}{of length $t$}\end{tabular} \right\}}\to{\left\{ \begin{tabular}{@{}l@{}} $\tau$-exc. seq. in $\modd{\Lambda}$ \\ \multicolumn{1}{c}{of length $t$}\end{tabular} \right\}}$$
    given by $(M_1,,\cdots,M_t)\mapsto(\Lambda\otimes_{kQ}M_1,\cdots, \Lambda\otimes_{kQ}M_t)$.
\end{theorem}

If $t = \abs{\Lambda}$, we obtain the following corollary. 

\begin{corollary}
    The induction functor induces a bijection between the set of complete ($\tau$-)exceptional sequences in $\modd{kQ}$ and the set of complete $\tau$-exeptional sequences in $\modd{\Lambda}$. 
\end{corollary}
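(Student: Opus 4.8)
The plan is to obtain the corollary as the special case $t = n := \abs{\Lambda}$ of Theorem \ref{bijection_of_tau_exc_seq}. The one preliminary point is that ``complete'' refers to the same length on both sides, i.e. that $\abs{\Lambda} = \abs{kQ}$. Since $R$ is a finite dimensional local commutative $k$-algebra and $k$ is algebraically closed, $R$ has a unique simple module $R/\rad R \cong k$; hence the simple $\Lambda = R\otimes kQ$-modules are exactly the modules $(R/\rad R)\otimes S$ with $S$ a simple $kQ$-module, and so $\abs{\Lambda} = \abs{kQ} = n$. Consequently a ($\tau$-)exceptional sequence in $\modd kQ$ is complete precisely when it has length $n$, and a $\tau$-exceptional sequence in $\modd\Lambda$ is complete precisely when it has length $n$.

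Granting this, I would simply apply Theorem \ref{bijection_of_tau_exc_seq} with $t = n$: the induction functor $\Lambda\otimes_{kQ}-$ is a bijection from the set of ($\tau$-)exceptional sequences of length $n$ in $\modd kQ$ onto the set of $\tau$-exceptional sequences of length $n$ in $\modd\Lambda$, and by the previous paragraph these are exactly the complete sequences on either side. This is the claimed bijection, so there is no real obstacle: the corollary is a direct specialization, the only inputs being the length-$t$ statement of Theorem \ref{bijection_of_tau_exc_seq} itself and the (one-line) equality $\abs{\Lambda} = \abs{kQ}$.

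If a self-contained proof avoiding Theorem \ref{bijection_of_tau_exc_seq} were wanted, I would instead induct on $n$. Write a complete $\tau$-exceptional sequence in $\modd\Lambda$ as $(N_1,\dots,N_{n-1},N_n)$ with $N_n$ $\tau$-rigid in $\modd\Lambda$ and $(N_1,\dots,N_{n-1})$ complete $\tau$-exceptional in $J(N_n)$ (complete since $\abs{J(N_n)} = n-1$). By Proposition \ref{tau-rigid-Lambda1:1tau-rigid-kQ} there is a unique indecomposable $\tau$-rigid $kQ$-module $M_n$ with $N_n \cong \Lambda\otimes_{kQ}M_n$, and by Theorem \ref{J(Lambda_otimes_kQ)=mod(R_otimes_kQ')} we have $J(N_n)\simeq\modd(R\otimes kQ_{M_n})$ and $J(M_n)\simeq\modd kQ_{M_n}$, with these equivalences intertwining the two induction functors. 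Applying the inductive hypothesis inside $J(N_n)$ (the base case being covered by Remark \ref{tau-exc=exc_over_hereditary}, since $kQ_{M_n}$ is hereditary so $\tau$-exceptional and exceptional sequences coincide there) matches complete $\tau$-exceptional sequences in $J(N_n)$ with complete exceptional sequences in $\modd kQ_{M_n}$, and unwinding the identification recovers precisely the exceptional sequences $(M_1,\dots,M_{n-1})$ in $\modd kQ$ for which $(M_1,\dots,M_{n-1},M_n)$ is exceptional. Since the specialization argument is shorter, that is the one I would present.
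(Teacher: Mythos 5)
Your proposal is correct and matches the paper exactly: the corollary is obtained there simply as the special case $t = \abs{\Lambda}$ of Theorem \ref{bijection_of_tau_exc_seq}, with no further argument given. Your extra remark that $\abs{\Lambda} = \abs{kQ}$ (so that ``complete'' means the same length on both sides) is a sensible clarification the paper leaves implicit, and the alternative inductive argument, while plausible, is not needed.
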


\section{Mutation}\label{secMutation}

In this section, we discuss mutation of complete $\tau$-exceptional sequences in $\modd\Lambda$, where $\Lambda = R\otimes kQ$. 

In the hereditary case, there is a transitive action of the braid group on the set of complete exceptional sequences \cite{Boevey-ExcSeq, braid_group_action_on_exc_seq_Ringel}. More precisely, for a complete exceptional sequence $\E = (X_1,\cdots,X_n)$ in $\modd kQ$ and $1\leq i\leq n-1$, the left action of the braid group on $\E$ is given by 
$$\sigma_i(\E) = \sigma_i(X_1,\cdots, X_n) = (X_1,\cdots,X_{i-1}, Y, X_i, X_{i+2}, \cdots, X_n)$$
for a unique indecomposable exceptional $kQ$-module $Y$ such that $\sigma_i(\E)$ is a complete exceptional sequence (the $kQ$-module $Y$ is explicitly described in \cite{ThreeKindsOfMutations}). 

Recently, Buan, Hanson, and Marsh \cite{BHM_mutation} defined a notion of mutation for complete $\tau$-exceptional sequences over an arbitrary finite dimensional algebra (BHM mutation).  We refer to a ($\tau$-)exceptional sequence of length two as a \textit{($\tau$-)exceptional pair}. We recall their terminology. 

Let $\mathcal{S}$ be a subcategory of $\modd \Lambda$. We denote by $\P(\mathcal{S)}$ (respectively, $\P_s(\mathcal{S})$) the full subcategory of $\mathcal{S}$ consisting of Ext-projective (respectively, split Ext-projective) modules in $\mathcal{S}$. Additionally, $\P_{ns}(\mathcal{S})$ denotes the full subcategory of $\mathcal{S}$ consisting of all the modules in $\P(\mathcal{S})$ none of whose indecomposable direct summands are split Ext-projective in $\mathcal{S}$. 

Dually, we denote by $\mathcal{I}(\mathcal{S})$ (respectively, $\mathcal{I}_s(\mathcal{S})$) the subcategory of $\mathcal{S}$ consisting of the modules in $\mathcal{S}$ which are Ext-injective (resp. split Ext-injective) in $\mathcal{S}$. Additionally, $\mathcal{I}_{ns}(\mathcal{S})$ denotes the full subcategory of $\mathcal{S}$ consisting of all the modules in $\mathcal{I}(\mathcal{S})$ none of whose indecomposable direct summands are split Ext-injective in $\mathcal{S}$.

\begin{definition}[{\cite[Def. 3.11]{BHM_mutation}}\label{left_regular_pair}] Let $\Lambda$ be a finite dimensional algebra. 
    \begin{enumerate}
        \item[(a)] A $\tau$-exceptional pair $(B,C)$ with $C\notin \P({^\perp}\tau\mathcal{E}_C^{-1}(B))$ or $C\in \proj\Lambda$ is called \emph{left regular}. Otherwise, it is called \emph{left irregular}; 
        \item[(b)]  A $\tau$-exceptional pair $(X,Y)$ with $\mathcal{E}^{-1}_Y(X)\in \P(^{\perp}\tau Y)$ or $Y\notin \Gen \mathcal{E}^{-1}_Y(X)$ is called \emph{right regular}. Otherwise, it is called \emph{right irregular}.
    \end{enumerate}
\end{definition}

\begin{definition}[{\cite[Def. 4.1]{BHM_mutation}}]
    A $\tau$-exceptional pair $(B,C)$, is called \emph{left immutable} if it is left irregular and the $\tau$-perpendicular category $J(B,C)$ is not left finite. A $\tau$-exceptional pair $(B,C)$ is called \emph{left mutable} if it is not left immutable.  
\end{definition}

\begin{definition}[{\cite[Def. 4.2]{BHM_mutation}}]
    A $\tau$-exceptional pair $(X,Y)$, is called \emph{right immutable} if it is right irregular and the $\tau$-perpendicular category $J(B,C)$ is not right finite. A $\tau$-exceptional pair $(X,Y)$ is called \emph{right mutable} if it is not right immutable.  
\end{definition}

For an indecomposable object $U$ in $\C(\Lambda)$, we denote 
$$\abs{U}= 
\begin{cases}
    U & \text{if }U\in\modd\Lambda,\\
    U[-1] & \text{if }U\in\modd{\Lambda}[1]. 
\end{cases}$$

For a $\tau$-exceptional pair $(M,N)$, we denote 
\begin{align*}
    &N_{+} := \begin{cases} N[1] &\text{if } N \in\proj\Lambda \\ N &\text{otherwise},\end{cases} &&M^{+}:=\begin{cases} M[1] &\text{if }M\in\proj J(N)\\ M &\text{otherwise},\end{cases}\\
    &M_{N\uparrow}:= \mathcal{E}^{-1}_{N_+}(M), && M^{+}_{N\uparrow}:= \mathcal{E}^{-1}_N(M^+).
\end{align*}

\begin{defprop}[{\cite[Def.-Prop. 4.3]{BHM_mutation}}]\label{varphi_left_regular}
    For a left regular $\tau$-excepional pair $(B,C)$ let $$\varphi(B,C):=\left(\abs{\mathcal{E}_{B_{C\uparrow}}(C_+)}, B_{C\uparrow}\right).$$ 
    Then $\varphi(B,C)$ is a right regular $\tau$-exceptional pair with $J(\varphi(B,C)) = J(B,C)$. 
\end{defprop}

\begin{defprop}[{\cite[Def.-Prop. 1.1]{BHM_mutation}}]
    Let $\mathcal{S}$ be a subcategory of $\modd\Lambda$. The following statements hold. 
    \begin{enumerate}
        \item[(a)] $({^\perp}(\mathcal{S}^{\perp}), \mathcal{S}^{\perp})$ and $({^\perp}{\mathcal{S}}, ({^\perp}\mathcal{S})^\perp)$ are torsion pairs;
        \item[(b)]  The subcategory ${^\perp}(\mathcal{S}^{\perp})=\Filt(\Gen\mathcal{S})=:\T(\mathcal{S})$ is the smallest torsion class containing $\mathcal{S}$;
        \item[(c)]  The subcategory $({^\perp}\mathcal{S})^\perp=\Filt(\Cogen\mathcal{S})=:\F(\mathcal{S})$ is the smallest torsion-free class containing $\mathcal{S}$.  
    \end{enumerate}
\end{defprop}

\begin{defprop}[{\cite[Def.-Prop. 4.4]{BHM_mutation}}]\label{varphi_left_irregular}
    Let $(B,C)$ be a left mutable left irregular $\tau$-exceptional pair and let $L:= \mathcal{E}^{-1}_C(B)\oplus C$. Let $$X':= \P_s(\Gen \P_{ns}(\T(J(L))))  \quad\text{ and }\quad Y:= \P_{ns}(\T(J(L)))/X'.$$ Then $\varphi(B,C):=\left({\mathcal{E}_{Y}(X')}, Y\right)$ 
    is a right irregular right mutable $\tau$-exceptional pair with $J(\varphi(B,C)) = J(B,C) = J(L)$. 
\end{defprop}

With the terminology established above, the following theorem summarizes some of the main results of \cite{BHM_mutation}. 

 \begin{theorem}\label{main_thm_BHM}
      \begin{enumerate}
        \item[(i)] \cite[Thm. 4.7]{BHM_mutation} There is a bijection 
        $$\left\{ \text{left mutable }\tau\text{-exceptional pairs} \right\}\xrightarrow{\varphi} \left\{ \text{right mutable }\tau\text{-exceptional pairs} \right\};$$
        \item[(ii)] \cite[Thm. 4.11]{BHM_mutation} Suppose $\Lambda$ is hereditary, $\tau$-tilting finite, or has rank two. Then every $\tau$-exceptional pair is both left and right mutable. 
    \end{enumerate}
 \end{theorem}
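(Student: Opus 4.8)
The plan is to treat the two parts separately: part (i) by exhibiting an explicit inverse for $\varphi$, and part (ii) by a case analysis showing that the obstruction to mutability never arises for the three listed families. For part (i), first note that every left regular pair is automatically left mutable (being left immutable requires being left irregular), and dually every right regular pair is right mutable. Thus the left mutable pairs partition as $\{\text{left regular}\}\sqcup\{\text{left irregular left mutable}\}$ and the right mutable pairs as $\{\text{right regular}\}\sqcup\{\text{right irregular right mutable}\}$. Def.-Prop.~\ref{varphi_left_regular} and Def.-Prop.~\ref{varphi_left_irregular} already show that $\varphi$ is well defined and carries the first stratum into $\{\text{right regular}\}$ and the second into $\{\text{right irregular right mutable}\}$, in each case preserving the $\tau$-perpendicular category $J(B,C)$. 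Hence to prove that $\varphi$ is a bijection it suffices to produce a map $\psi$ on right mutable pairs, given by the formally dual recipe, and to check that $\psi$ and $\varphi$ are mutually inverse on each stratum.

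I would define $\psi$ by dualizing the two formulas: on right regular pairs using the $\tau^{-1}$-perpendicular constructions and the bijection $\mathcal{E}$ of Theorem~\ref{BM_bijection} read in the opposite direction, and on right irregular pairs using split Ext-injectives, $\Cogen$, and the smallest torsion-free class $\F(-)$ in place of split Ext-projectives, $\Gen$, and $\T(-)$. On the regular stratum, checking $\psi\circ\varphi=\mathrm{id}$ is a direct unwinding: the second entry of $\varphi(B,C)$ is $B_{C\uparrow}=\mathcal{E}^{-1}_{C_+}(B)$, and applying the $\mathcal{E}$-bijection together with the compatibility relations for $N_+$, $M^+$, and $\abs{-}$ recovers $(B,C)$. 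The main tool here is that $\mathcal{E}_U$ and $\mathcal{E}^{-1}_U$ are mutually inverse and interact predictably with $\Gen M$ and $(\proj\Lambda)[1]$ via Theorem~\ref{BM_bijection}(a)--(c).

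The heart of the argument, and the step I expect to be the main obstacle, is matching the irregular-case constructions with their duals. Here $\varphi$ is built from $X':=\P_s(\Gen\P_{ns}(\T(J(L))))$ and $Y:=\P_{ns}(\T(J(L)))/X'$, and one must show that the dual data --- the split Ext-injectives of the appropriate torsion-free class, together with $\Cogen$ and $\F(-)$ --- reproduce $(B,C)$ after applying $\psi$. This requires a careful analysis of the interplay between the split Ext-projective and split Ext-injective objects of $\T(J(L))$, the functor $\mathcal{E}_Y$, and the equality $J(\varphi(B,C))=J(L)=J(B,C)$; the uniqueness property of Theorem~\ref{uniqueness_property_of_tau_exc_seq}, applied after completing the pairs, can be used to pin down the reconstructed entries once their $\tau$-perpendicular categories are shown to agree. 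Once $\psi\circ\varphi=\mathrm{id}$ is established on both strata, $\varphi\circ\psi=\mathrm{id}$ follows from the symmetry of the two constructions.

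For part (ii), recall that a pair $(B,C)$ is left mutable unless it is left irregular \emph{and} $J(B,C)$ fails to be left finite, and dually for right mutability; it therefore suffices to show that in each case $J(B,C)$ is both left finite and right finite. When $\abs{\Lambda}=2$ a $\tau$-exceptional pair is complete, so $J(B,C)=0$, which is trivially left and right finite. When $\Lambda$ is $\tau$-tilting finite, every torsion class and every torsion-free class is functorially finite, so every $\tau$-perpendicular subcategory --- in particular $J(B,C)$ --- is simultaneously left finite and right finite, and no pair can be immutable. When $\Lambda$ is hereditary, $J(B,C)$ is a Geigle--Lenzing perpendicular category by Remark~\ref{tau-exc=exc_over_hereditary} and hence functorially finite; the remaining point, which is the main obstacle in this case since a hereditary algebra need not be $\tau$-tilting finite (for instance the Kronecker algebra), is to verify that such a perpendicular subcategory is both left finite and right finite. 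This can be carried out using the coincidence of functorially finite, left finite, and right finite wide subcategories over hereditary algebras (the hereditary analogue of Theorem~\ref{bijection_of_tau-perp_subcategories}(ii)), after which immutability is again impossible.
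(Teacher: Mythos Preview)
This theorem is not proved in the present paper: it is stated as a summary of results from \cite{BHM_mutation} (specifically \cite[Thm.~4.7]{BHM_mutation} and \cite[Thm.~4.11]{BHM_mutation}), with no argument given here. There is therefore no proof in the paper to compare your proposal against.

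That said, the paper does give one hint about the argument for part~(ii): in the proof of Proposition~\ref{left-right_mutable_pair_for_Lambda} it is noted that, as observed in the proof of \cite[Thm.~4.11]{BHM_mutation}, it suffices to show that every $\tau$-perpendicular subcategory is both left and right finite. Your plan for part~(ii) follows exactly this line, so on that point you are aligned with the cited source. For part~(i), your outline of constructing a dual map $\psi$ and verifying $\psi\circ\varphi=\mathrm{id}$ on each stratum is a plausible strategy, but the paper offers no information about how \cite[Thm.~4.7]{BHM_mutation} is actually established, so no comparison can be made here; you would need to consult \cite{BHM_mutation} directly.
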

 
The next observation gives a new class of examples for which every $\tau$-exceptional pair is both left and right mutable.

\begin{proposition}\label{left-right_mutable_pair_for_Lambda}
    Let $\Lambda = R\otimes kQ$. Then every $\tau$-exceptional pair is both left and right mutable. 
\end{proposition}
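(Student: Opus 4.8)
The plan is to reduce the claim to the known case of hereditary algebras (Theorem~\ref{main_thm_BHM}(ii)) by transporting everything through the induction functor, using the structural results from \cite{nonis2025tau} recalled in Sections~\ref{sec1} and \ref{sec2}. First I would observe that any $\tau$-exceptional pair in $\modd\Lambda$ is of the form $(\Lambda\otimes_{kQ}B_0,\ \Lambda\otimes_{kQ}C_0)$ for a unique $(\tau$-$)$exceptional pair $(B_0,C_0)$ in $\modd kQ$, by Theorem~\ref{bijection_of_tau_exc_seq} (with $t=2$). Since $kQ$ is hereditary, the pair $(B_0,C_0)$ is both left and right mutable in $\modd kQ$ by Theorem~\ref{main_thm_BHM}(ii). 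So the entire task is to show that the properties ``left mutable'' and ``right mutable'' are preserved by $\Lambda\otimes_{kQ}-$.

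Unwinding the definitions, left mutability of $(B,C)$ is the statement ``$(B,C)$ is left regular, or $J(B,C)$ is left finite''; similarly on the right. Thus I would argue in two steps. Step one: left/right regularity is preserved by induction. Left regularity (Definition~\ref{left_regular_pair}(a)) is phrased in terms of $\mathcal{E}_C^{-1}(B)$, the subcategory $^\perp\tau(-)$, membership in $\P(-)$, and membership in $\proj\Lambda$; all of these are controlled by the $\mathcal{E}$-compatibility of Proposition~\ref{E-version3.7} together with the identification $\mathcal{E}_C=f_C$ of Theorem~\ref{BM_bijection}(c), the behaviour of relative projectives under induction (Proposition~\ref{indprojJ(M)1-1indprojJ(Lambda_otimes_M)}), and the fact that projectives correspond to projectives and that $\Lambda\otimes_{kQ}-$ reflects membership in $\Gen$ and in $\P(^\perp\tau-)$. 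Concretely: $\mathcal{E}_{C_0}^{-1}(B_0)$ is relative projective in $^\perp\tau C_0 \subseteq \modd kQ$ if and only if its image $\mathcal{E}_{\Lambda\otimes C_0}^{-1}(\Lambda\otimes B_0)$ is relative projective in $^\perp\tau(\Lambda\otimes_{kQ}C_0)$ (Proposition~\ref{indprojJ(M)1-1indprojJ(Lambda_otimes_M)} or its $\mathcal E$-version in Proposition~\ref{E-version3.7}), and $C_0\in\Gen\mathcal{E}_{C_0}^{-1}(B_0)$ iff $\Lambda\otimes_{kQ}C_0 \in \Gen\mathcal{E}_{\Lambda\otimes C_0}^{-1}(\Lambda\otimes B_0)$, because the induction functor sends the relevant torsion pair to the relevant torsion pair. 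Step two: if $(B_0,C_0)$ is left regular in $\modd kQ$ then $(\Lambda\otimes B_0,\Lambda\otimes C_0)$ is left regular in $\modd\Lambda$, hence a fortiori left mutable; and dually on the right. Since every hereditary $\tau$-exceptional pair is left regular or has left finite perpendicular (in fact it is always mutable, by Theorem~\ref{main_thm_BHM}(ii)), and the only way mutability can fail is via an irregular pair with non-left-finite perpendicular category, it suffices to rule that configuration out — which we do by noting that $J(\Lambda\otimes B_0,\Lambda\otimes C_0) = J(\Lambda\otimes_{kQ}L)$ for the appropriate $L$, and that by Theorem~\ref{bijection_of_tau-perp_subcategories}(ii) every $\tau$-perpendicular subcategory of $\modd\Lambda$ is both left finite and right finite. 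This last point alone is almost enough: any $\tau$-exceptional pair $(B,C)$ over $\Lambda$ has $J(B,C)$ left finite and right finite by Theorem~\ref{bijection_of_tau-perp_subcategories}(ii), so it is vacuously neither left immutable nor right immutable, hence left and right mutable.

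In fact I expect the cleanest route bypasses the induction functor almost entirely: by Theorem~\ref{bijection_of_tau-perp_subcategories}(ii), \emph{every} $\tau$-perpendicular subcategory of $\modd\Lambda$ is simultaneously a left finite and a right finite wide subcategory. Given any $\tau$-exceptional pair $(B,C)$ in $\modd\Lambda$, the category $J(B,C)$ is a $\tau$-perpendicular subcategory, hence left finite, hence $(B,C)$ cannot be left immutable; similarly it is right finite, so $(B,C)$ cannot be right immutable. Therefore $(B,C)$ is both left and right mutable. The main obstacle is purely bookkeeping: making sure that $J(B,C)$ — defined via the iterated construction $J_{J(C)}(\mathcal{E}_C^{-1}(B))$ inside the algebra $\Gamma_C \cong R\otimes kQ_C$ — is indeed a $\tau$-perpendicular subcategory \emph{of $\modd\Lambda$} and not merely of $\modd\Gamma_C$, so that Theorem~\ref{bijection_of_tau-perp_subcategories}(ii) applies; this follows from \cite[Lemma 6.5]{nonis2025tau} (the remark after the definition of $\tau$-perpendicular categories) identifying $J(B,C)$ with $J(M)$ for a genuine $\tau$-rigid $\Lambda$-module $M$, together with the fact that $\Gamma_C\cong R\otimes kQ_C$ is again of the required form, so iterated $\tau$-perpendicular subcategories stay within the class covered by Theorem~\ref{bijection_of_tau-perp_subcategories}(ii). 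I would present the argument via this direct route and relegate the induction-functor compatibility to a remark, since it is the conceptually transparent statement that will be reused when proving Theorem~\ref{thm0.2}.
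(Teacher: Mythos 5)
Your final ``direct route'' is exactly the paper's proof: the paper likewise observes (following the proof of \cite[Thm.\ 4.11]{BHM_mutation}) that it suffices for every $\tau$-perpendicular subcategory of $\modd\Lambda$ to be both left and right finite, and concludes by Theorem \ref{bijection_of_tau-perp_subcategories}(ii) that these classes coincide. The lengthy induction-functor reduction in your first two paragraphs is therefore unnecessary for this proposition, but the argument you ultimately present is correct and matches the paper.
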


\begin{proof}
    As observed in the Proof of Theorem \cite[Thm. 4.11]{BHM_mutation}, it suffices to show that every $\tau$-perpendicular category is both left and right finite (see Definition \ref{left_regular_pair}). But $\tau$-perpendicular categories, left finite wide subcategories, and right finite wide subcategories of $\modd\Lambda$ coincide by Theorem \ref{bijection_of_tau-perp_subcategories} (i). The claim follows. 
\end{proof}

We have the following observation. 

\begin{proposition}\label{bijection_of_left_(ir)regular_pairs}
    There are bijections 
    \begin{equation}\label{(4)left_regular}
        \Lambda\otimes_{kQ}-: {\left\{ \begin{tabular}{@{}l@{}} left regular ($\tau$-)exc. \\ \quad pairs in $\modd{kQ}$ \end{tabular} \right\}}\to{\left\{ \begin{tabular}{@{}l@{}} left regular ($\tau$-)exc. \\ \quad  pairs in $\modd{\Lambda}$\end{tabular} \right\}}
    \end{equation}

    \begin{equation}\label{bijection_of_left_irr_pairs}
        \Lambda\otimes_{kQ}-: {\left\{ \begin{tabular}{@{}l@{}} left irregular ($\tau$-)exc. \\ \quad pairs in $\modd{kQ}$ \end{tabular} \right\}}\to{\left\{ \begin{tabular}{@{}l@{}} left irregular $\tau$-exc. \\ \quad  pairs in $\modd{\Lambda}$\end{tabular} \right\}}
    \end{equation}
    given by the induction functor. 
\end{proposition}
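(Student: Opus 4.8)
The plan is to prove both bijections by combining the known bijection of $(\tau$-$)$exceptional pairs (the length-two case of Theorem \ref{bijection_of_tau_exc_seq}) with a compatibility statement: the induction functor preserves and reflects the two defining conditions in Definition \ref{left_regular_pair}(a), namely ``$C\in\proj\Lambda$'' and ``$C\in\P({}^{\perp}\tau\mathcal{E}_C^{-1}(B))$''. Since by Theorem \ref{bijection_of_tau_exc_seq} the assignment $\M\mapsto\Lambda\otimes_{kQ}\M$ is already a bijection on the set of all $(\tau$-$)$exceptional pairs, it suffices to show that a pair $(B,C)$ in $\modd kQ$ is left regular if and only if $(\Lambda\otimes_{kQ}B,\Lambda\otimes_{kQ}C)$ is left regular in $\modd\Lambda$; then \eqref{(4)left_regular} follows by restriction, and \eqref{bijection_of_left_irr_pairs} follows by taking complements.

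First I would handle the condition $C\in\proj kQ$. The induction functor sends $\proj kQ$ to $\proj\Lambda$ and, conversely, every indecomposable projective $\Lambda$-module is of the form $\Lambda\otimes_{kQ}P$ with $P\in\proj kQ$ (this is standard for $\Lambda=R\otimes kQ$, and is implicit in the setup of Section \ref{sec1}); moreover $\Lambda\otimes_{kQ}-$ is injective on isoclasses of indecomposables by Proposition \ref{tau-rigid-Lambda1:1tau-rigid-kQ} in the $\tau$-rigid case, which covers $C$. So $C\in\proj kQ$ iff $\Lambda\otimes_{kQ}C\in\proj\Lambda$. Next I would handle the condition $C\notin\P({}^{\perp}\tau\mathcal{E}_C^{-1}(B))$. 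The key point is that $\mathcal{E}_C^{-1}$ intertwines with induction: writing $B':=\mathcal{E}_C^{-1}(B)\in J(C)$, Proposition \ref{E-version3.7} (applied with $U=C$, $V=\mathcal{E}_C^{-1}(B)$, noting $\mathcal{E}_C(\mathcal{E}_C^{-1}(B))=B$) gives $\mathcal{E}_{\Lambda\otimes_{kQ}C}(\Lambda\otimes_{kQ}B')\cong\Lambda\otimes_{kQ}B$, so $(\Lambda\otimes_{kQ}C)'=\mathcal{E}_{\Lambda\otimes_{kQ}C}^{-1}(\Lambda\otimes_{kQ}B)\cong\Lambda\otimes_{kQ}B'$. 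Then I must show that $C\in\P({}^{\perp}\tau_{kQ}B')$ iff $\Lambda\otimes_{kQ}C\in\P({}^{\perp}\tau_\Lambda(\Lambda\otimes_{kQ}B'))$. Since ${}^{\perp}\tau_{kQ}B'$ is the torsion-free class of a torsion pair whose Ext-projectives are exactly $\P_s$ of the Bongartz-type completion, and since Proposition \ref{indprojJ(M)1-1indprojJ(Lambda_otimes_M)} (together with the surrounding discussion of $\mathcal P({}^{\perp}\tau M)$ and Bongartz completions) shows that induction gives a bijection on indecomposable Ext-projectives of ${}^{\perp}\tau M$ compatible with $\mathcal E_M$, one gets that $\Lambda\otimes_{kQ}C$ is Ext-projective in ${}^{\perp}\tau_\Lambda(\Lambda\otimes_{kQ}B')$ precisely when $C$ is Ext-projective in ${}^{\perp}\tau_{kQ}B'$. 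Combining the two equivalences yields left regularity iff left regularity, establishing \eqref{(4)left_regular}.

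For \eqref{bijection_of_left_irr_pairs}: the left irregular pairs are exactly the $(\tau$-$)$exceptional pairs that are not left regular, so once we know $\Lambda\otimes_{kQ}-$ restricts to a bijection between all pairs (Theorem \ref{bijection_of_tau_exc_seq}, $t=2$) and to a bijection between left regular pairs, it automatically restricts to a bijection between the complements, which are the left irregular pairs; here I use that in $\modd\Lambda$ every $\tau$-exceptional pair comes from $\modd kQ$ (stated after \eqref{ind_functor(exc_seq)}), so there are no ``extra'' irregular pairs on the $\Lambda$ side. No further work is needed beyond invoking the surjectivity half of Theorem \ref{bijection_of_tau_exc_seq}.

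The main obstacle I anticipate is the bookkeeping around the functor $\mathcal E$ and Ext-projectivity: I need Proposition \ref{E-version3.7} to be applicable in exactly the form $\mathcal{E}_{\Lambda\otimes_{kQ}C}^{-1}(\Lambda\otimes_{kQ}B)\cong\Lambda\otimes_{kQ}(\mathcal{E}_C^{-1}(B))$, which requires $B$ to lie in the appropriate domain (it does, being the first term of a $\tau$-exceptional pair, hence $\mathcal{E}_C^{-1}(B)$ is support $\tau_{J(C)}$-rigid), and I need the identification of Ext-projectives of ${}^{\perp}\tau M$ under induction, for which I would cite Proposition \ref{indprojJ(M)1-1indprojJ(Lambda_otimes_M)} and \cite[Prop. 4.6, Prop. 4.8]{nonis2025tau} as used in its proof. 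One subtlety is that $\P({}^{\perp}\tau M)$ also contains the summands of $M$ itself, so I should phrase the Ext-projectivity comparison for the full completion $B_{B'}\oplus B'$ rather than only for the ``new'' summands, but this is exactly what Proposition \ref{indprojJ(M)1-1indprojJ(Lambda_otimes_M)} together with Proposition \ref{tau-rigid-Lambda1:1tau-rigid-kQ} delivers. Everything else is a direct translation along the bijections already recorded in Sections \ref{sec1} and \ref{sec2}.
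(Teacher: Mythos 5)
Your proposal is correct and follows essentially the same route as the paper: both reduce to the bijection of $(\tau$-$)$exceptional pairs from Theorem \ref{bijection_of_tau_exc_seq} and then check that the two defining conditions of left regularity (being projective, and Ext-projectivity in ${}^{\perp}\tau\mathcal{E}_C^{-1}(B)$) are preserved and reflected by induction, via Propositions \ref{E-version3.7} and \ref{indprojJ(M)1-1indprojJ(Lambda_otimes_M)}. Your complement argument for the irregular case is a clean way of making precise what the paper dismisses as ``similar.''
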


\begin{proof}
   We only prove the first bijection since the second one is similar. By Theorem \ref{bijection_of_tau_exc_seq}, the induction functor induces a bijection between ($\tau$-)exceptional pairs in $\modd{kQ}$ and $\tau$-exceptional pairs in $\modd\Lambda$, and therefore the assignment \eqref{(4)left_regular} is injective. 
   
   We are left to prove that $\Lambda\otimes_{kQ}-$ is surjective on left regular $\tau$-exceptional pairs. Let $(X,Y)$ be a left regular $\tau$-exceptional pair in $\modd \Lambda$. Then, $(X,Y)$ is isomorphic to $(\Lambda\otimes_{kQ}B, \Lambda\otimes_{kQ}C)$ for a ($\tau$-)exceptional pair $(B,C)$ in $\modd kQ$ by Theorem \ref{bijection_of_tau_exc_seq}. We show that $(B,C)$ is left regular. By definition of a left regular pair, $\Lambda\otimes_{kQ}C\notin \P({^\perp}\tau \Eps^{-1}_{\Lambda\otimes_{kQ}C}(\Lambda\otimes_{kQ}B))$ or $\Lambda\otimes_{kQ}C\in\proj\Lambda$. Since the induction functor induces a bijection between $\P({^\perp}\tau \Eps_C^{-1}B)$ and $\P({^\perp}\tau(\Lambda\otimes_{kQ}\Eps^{-1}_C B))$ (see Proposition \ref{indprojJ(M)1-1indprojJ(Lambda_otimes_M)}), and $\P({^\perp}\tau \Eps^{-1}_{\Lambda\otimes_{kQ}C}(\Lambda\otimes_{kQ}B)) = \P({^\perp}(\Lambda\otimes_{kQ}\tau\Eps^{-1}_C B))$ by Proposition \ref{E-version3.7} and \cite[Prop. 1.8(iii)]{nonis2025tau}, it follows that $\Lambda\otimes_{kQ}C\notin\P({^\perp}(\Lambda\otimes_{kQ}\tau\Eps^{-1}_C B))$ if and only if $C\notin \P(^{\perp}\tau \Eps_C^{-1}B)$. Moreover, $\Lambda\otimes_{kQ}C\in \proj\Lambda$ if and only if $C\in\proj{kQ}$. We conclude that $(B,C)$ is a left regular ($\tau$-)exceptional pair in $\modd{kQ}$, giving surjectivity. This finishes the proof. 
\end{proof}

Mutation of $\tau$-exceptional pairs induces a mutation on $\tau$-exceptional sequences as follows. Let $\Lambda$ be a finite dimensional algebra and let $\W\subseteq \modd\Lambda$ be a $\tau$-perpendicular subcategory of $\modd\Lambda$. We denote by $\varphi^{\W}$ the map $\varphi$ in the category $\W$. 

\begin{definition}[{\cite[Def. 5.1]{BHM_mutation}}]
    Let $\Lambda$ be a finite dimensional algebra and let $t$ be a positive integer. Let $\M = (M_1,\cdots, M_t)$ be a  $\tau$-exceptional sequence in $\modd\Lambda$ and let $i\in \{1,\cdots,t-1\}$. 
    \begin{enumerate}
        \item[(a)] We say that $\M$ is \emph{left $i$-mutable} (resp. \emph{right $i$-mutable}) if $(M_i,M_{i+1})$ is a left mutable (resp. right mutable) $\tau$-exceptional pair in $J(M_{i+2},\cdots, M_t)$;  
        \item[(b)] We say that $\M$ is \emph{left $i$-regular} (resp. \emph{right $i$-regular}) if $(M_i,M_{i+1})$ is a left regular (resp. right regular) $\tau$-exceptional pair in $J(M_{i+2},\cdots, M_t)$;   
         \item[(c)] We say that $\M$ is \emph{left $i$-irregular} (resp. \emph{right $i$-irregular}) if $(M_i,M_{i+1})$ is a left regular (resp. right irregular) $\tau$-exceptional pair in $J(M_{i+2},\cdots, M_t)$.   
    \end{enumerate}
\end{definition}

\begin{definition}[{\cite[Def. 5.2]{BHM_mutation}}]
    Let $\Lambda$ be a finite dimensional algebra and let $t$ be a positive integer. Let $\M = (M_1,\cdots, M_t)$ be a $\tau$-exceptional sequence in $\modd\Lambda$ and let $i\in \{1,\cdots,t-1\}$. If $\M$ is left $i$-mutable, then the $i$-\emph{th left mutation} of $\M$ is the $\tau$-exceptional sequence
    $$ \varphi_i(\M) := (M_1,\cdots, M_i',M_{i+1}',\cdots, M_t)$$
    where $(M_i',M_{i+1}') = \varphi^{J(M_{i+2},\cdots, M_t)}(M_i, M_{i+1})$;
    
\end{definition}

As a direct consequence of Theorem \ref{main_thm_BHM} (i), one obtains a bijection \cite[Cor. 5.3]{BHM_mutation}

$${\left\{ \begin{tabular}{@{}l@{}} left $i$-mutable  $\tau$-exceptional \\ \quad sequences of length $t$\end{tabular} \right\}}\xrightarrow{\varphi_i}{\left\{ \begin{tabular}{@{}l@{}} right $i$-mutable  $\tau$-exceptional \\ \quad sequences of length $t$\end{tabular} \right\}}.$$

An algebra $\Lambda$ is \textit{mutation complete} if every $\tau$-exceptional sequence (of length $t$) is both left and right $i$-mutable (for all $1\leq i \leq t-1$). 

\begin{corollary}[{\cite[Cor. 5.4]{BHM_mutation}}]
  Let $\Lambda$ be hereditary, $\tau$-tilting finite, or has rank two. Then $\Lambda$ is mutation complete.   
\end{corollary}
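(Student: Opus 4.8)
The plan is to unwind the definition of mutation completeness and reduce the whole statement to the pair-mutability result Theorem \ref{main_thm_BHM}(ii). Fix a $\tau$-exceptional sequence $\M = (M_1,\dots,M_t)$ in $\modd\Lambda$ and an index $1\le i\le t-1$. By definition $\M$ is left (resp. right) $i$-mutable exactly when the $\tau$-exceptional pair $(M_i,M_{i+1})$ is left (resp. right) mutable inside the iterated $\tau$-perpendicular subcategory $\W := J(M_{i+2},\dots,M_t)$. So it suffices to prove: for every $\tau$-perpendicular subcategory $\W$ of $\modd\Lambda$, every $\tau$-exceptional pair in $\W$ is both left and right mutable.

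The first step is to record that $\W$, being an iterated $\tau$-perpendicular subcategory, is itself a $\tau$-perpendicular subcategory of $\modd\Lambda$ (stability of $\tau$-perpendicular subcategories under further $\tau$-reduction, \cite{tau-perpendicular_wide_subategories}), hence functorially finite and wide, and by Jasso's reduction \cite{Jasso_Reduction} equivalent to $\modd\Gamma$ for a finite-dimensional algebra $\Gamma$ with $\abs\Gamma = \abs\Lambda - \abs{M_{i+2}\oplus\cdots\oplus M_t}$. Any equivalence $\W\simeq\modd\Gamma$ preserves the Auslander--Reiten translate, the $\Hom$- and $\Ext$-spaces, torsion and torsion-free classes, Ext-projectives and Ext-injectives, the bijections $\mathcal{E}_U$ of Theorem \ref{BM_bijection}, and the notions of left/right finiteness; consequently it preserves left and right regularity and left and right mutability of $\tau$-exceptional pairs. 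Thus the problem reduces to: show that $\Gamma$ is hereditary, $\tau$-tilting finite, or of rank two whenever $\Lambda$ is, and then invoke Theorem \ref{main_thm_BHM}(ii) for $\Gamma$.

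It then remains to check that each of the three classes is closed under passing to $\tau$-perpendicular subcategories. If $\Lambda$ is hereditary, then by Remark \ref{tau-exc=exc_over_hereditary} a $\tau$-perpendicular subcategory is the Geigle--Lenzing perpendicular category $X^{\perp_{0,1}}$ of an exceptional module, which by \cite{Geigle_Lenzing_Perp_Subcat} is equivalent to the module category of a hereditary algebra, so $\Gamma$ is hereditary. If $\Lambda$ is $\tau$-tilting finite, then $\tau$-tilting finiteness passes to $\tau$-perpendicular subcategories: the lattice of torsion classes of $\W$ is isomorphic to an interval of the finite lattice of torsion classes of $\modd\Lambda$ via $\tau$-reduction, so $\W$, and hence $\Gamma$, has only finitely many torsion classes. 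If $\Lambda$ has rank two, then $\abs\Gamma\le 2$; when $\abs\Gamma\le 1$ there is no $\tau$-exceptional pair in $\modd\Gamma$ at all (a $\tau$-exceptional sequence has length at most the rank, and a rank-one perpendicular reduces further to the zero category), so the requirement is vacuous, while for $\abs\Gamma = 2$ it holds by hypothesis. In every case Theorem \ref{main_thm_BHM}(ii) applies to $\Gamma$, so $(M_i,M_{i+1})$ is left and right mutable in $\W$; since $\M$ and $i$ were arbitrary, $\Lambda$ is mutation complete.

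The main obstacle is the closure of the three classes under $\tau$-perpendicular subcategories, and among these the $\tau$-tilting-finite case is the least immediate, since it relies on the interval description of the torsion lattice of $J(U)$ coming from $\tau$-reduction; the hereditary and rank-two cases are essentially formal. A secondary point requiring care is that the iterated category $J(M_{i+2},\dots,M_t)$ is genuinely a $\tau$-perpendicular subcategory of $\modd\Lambda$, not merely of the previous stage, so that Jasso's theorem and Theorem \ref{main_thm_BHM}(ii) may be applied to it directly.
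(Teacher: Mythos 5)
Your argument is correct. Note first that the paper does not actually prove this corollary: it is quoted verbatim from \cite[Cor. 5.4]{BHM_mutation} as a known consequence of Theorem \ref{main_thm_BHM}(ii). The closest in-paper comparison is the proof of the analogous Corollary \ref{Lambda_is_mutation_complete} for $\Lambda = R\otimes kQ$, and your skeleton matches it exactly: identify the iterated category $J(M_{i+2},\dots,M_t)$ as a genuine $\tau$-perpendicular subcategory of $\modd\Lambda$, show the relevant class of algebras is closed under $\tau$-perpendicular reduction, and invoke the pair-level mutability statement inside the reduced module category. What you add, and what the paper leaves implicit, are the three closure verifications (Geigle--Lenzing for the hereditary case, the interval description of the torsion lattice under Jasso reduction for the $\tau$-tilting finite case, and the vacuity/rank count for rank two); all three are correct, and the rank-two observation that only $\W=\modd\Lambda$ itself carries a $\tau$-exceptional pair is exactly the right way to dispose of that case. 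The one genuine difference in route is at the pair level: you treat Theorem \ref{main_thm_BHM}(ii) as a black box, whereas the paper's own Proposition \ref{left-right_mutable_pair_for_Lambda} (following the proof of \cite[Thm. 4.11]{BHM_mutation}) derives pair-mutability by showing that every $\tau$-perpendicular category is both left and right finite, so that the immutability condition in Definitions 3.3--3.4 can never be triggered, regardless of regularity. Your version buys modularity at the cost of needing the closure statements; the paper's version makes the hereditary and $\tau$-tilting finite cases nearly definitional but still tacitly relies on the same closure when passing to iterated reductions. Either way the conclusion stands.
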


The following result gives a new class of algebras that are mutation complete.  

\begin{corollary}\label{Lambda_is_mutation_complete}
    Let $\Lambda = R\otimes kQ$. Then $\Lambda$ is mutation complete. 
\end{corollary}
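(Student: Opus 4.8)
The plan is to reduce Corollary~\ref{Lambda_is_mutation_complete} to Proposition~\ref{left-right_mutable_pair_for_Lambda}, exactly as \cite[Cor. 5.4]{BHM_mutation} reduces the hereditary / $\tau$-tilting finite / rank-two cases of mutation completeness to the corresponding pair-mutability statement in Theorem~\ref{main_thm_BHM}(ii). Recall that $\Lambda$ is mutation complete if every $\tau$-exceptional sequence $\M = (M_1,\dots,M_t)$ is both left and right $i$-mutable for every $1\leq i\leq t-1$, which by definition means that the $\tau$-exceptional pair $(M_i,M_{i+1})$ is left and right mutable inside the iterated $\tau$-perpendicular subcategory $\W := J(M_{i+2},\dots,M_t)$.

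First I would recall that $\W$ is itself a $\tau$-perpendicular subcategory of $\modd\Lambda$, hence (by the iterated application of Theorem~\ref{J(Lambda_otimes_kQ)=mod(R_otimes_kQ')}, together with Theorem~\ref{bijection_of_tau-perp_subcategories}(i)) equivalent to the module category of an algebra of the form $R\otimes kQ'$ for some acyclic quiver $Q'$; in particular $\W$ satisfies the hypotheses of Proposition~\ref{left-right_mutable_pair_for_Lambda}. Next I would observe that $(M_i,M_{i+1})$ is a $\tau$-exceptional pair in $\W$ by the very definition of a $\tau$-exceptional sequence, since the truncation $(M_1,\dots,M_{i+1})$ is a $\tau$-exceptional sequence in $J(M_{i+2},\dots,M_t) = \W$ and hence $(M_i,M_{i+1})$ is a $\tau$-exceptional pair in $J_\W(M_{i+2},\dots)$—more precisely, in the iterated $\tau$-perpendicular subcategory of $\W$ obtained by perpendicularizing the already-consumed tail, which again is of the form $R\otimes kQ''$. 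Applying Proposition~\ref{left-right_mutable_pair_for_Lambda} to that subcategory, $(M_i,M_{i+1})$ is both left and right mutable there, so $\M$ is left and right $i$-mutable. Since $i$ and $\M$ were arbitrary, $\Lambda$ is mutation complete.

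The only genuine point requiring care is the bookkeeping that every subcategory appearing in the recursive definition of left/right $i$-mutability is again of the form $R\otimes kQ'$, so that Proposition~\ref{left-right_mutable_pair_for_Lambda} legitimately applies at each stage; this is precisely the content of Theorem~\ref{J(Lambda_otimes_kQ)=mod(R_otimes_kQ')} iterated, and poses no real obstacle. Concretely I would write:

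\begin{proof}
    Let $\M = (M_1,\dots,M_t)$ be a $\tau$-exceptional sequence in $\modd\Lambda$ and fix $i\in\{1,\dots,t-1\}$. By the iterated version of Theorem~\ref{J(Lambda_otimes_kQ)=mod(R_otimes_kQ')} (using Theorem~\ref{bijection_of_tau-perp_subcategories}(i)), the iterated $\tau$-perpendicular subcategory $\W := J(M_{i+2},\dots,M_t)$ is equivalent to $\modd(R\otimes kQ')$ for some acyclic quiver $Q'$. By definition of a $\tau$-exceptional sequence, $(M_i,M_{i+1})$ is a $\tau$-exceptional pair in $\W$. Applying Proposition~\ref{left-right_mutable_pair_for_Lambda} to the algebra $R\otimes kQ'$, the pair $(M_i,M_{i+1})$ is both left and right mutable in $\W$. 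Hence $\M$ is left and right $i$-mutable. As $i$ and $\M$ were arbitrary, $\Lambda$ is mutation complete.
\end{proof}
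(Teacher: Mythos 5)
Your proof is correct and follows essentially the same route as the paper: reduce to the statement that every $\tau$-perpendicular subcategory of $\modd\Lambda$ is equivalent to $\modd(R\otimes kQ')$ for an acyclic quiver $Q'$ (Theorem \ref{J(Lambda_otimes_kQ)=mod(R_otimes_kQ')}) and then apply Proposition \ref{left-right_mutable_pair_for_Lambda}. The only blemish is the muddled aside about ``perpendicularizing the already-consumed tail'' inside $\W$ --- since $\W = J(M_{i+2},\dots,M_t)$ is already the relevant category, no further reduction is needed, and your displayed proof correctly omits it.
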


\begin{proof}
    By Theorem \ref{J(Lambda_otimes_kQ)=mod(R_otimes_kQ')}, every $\tau$-perpendicular subcategory of $\modd\Lambda$ is equivalent to the module category of a finite dimensional algebra $R\otimes kQ'$ for an acyclic quiver $Q'$. The result follows applying Proposition \ref{left-right_mutable_pair_for_Lambda}. 
\end{proof}

\begin{example} Let $\Lambda = kQ/I$, where $Q$ is the quiver given by 
    \[\begin{tikzcd}[ampersand replacement=\&,cramped]
	\& 1 \\
	2 \& 3 \& 4 \& 5
	\arrow["{x_1}", from=1-2, to=1-2, loop, in=55, out=125, distance=10mm]
	\arrow["{y_2}"', from=1-2, to=2-1]
	\arrow["{y_3}"', from=1-2, to=2-2]
	\arrow["{y_4}"', from=1-2, to=2-3]
	\arrow["{y_5}", from=1-2, to=2-4]
	\arrow["{x_2}", from=2-1, to=2-1, loop, in=235, out=305, distance=10mm]
	\arrow["{x_3}", from=2-2, to=2-2, loop, in=235, out=305, distance=10mm]
	\arrow["{x_4}", from=2-3, to=2-3, loop, in=235, out=305, distance=10mm]
	\arrow["{x_5}", from=2-4, to=2-4, loop, in=235, out=305, distance=10mm]
\end{tikzcd}\]
and $I$ is the ideal generated by the relations $x_i^2$ and $y_{j+1}x_1-x_{j+1}y_{j+1}$ for $1\leq i \leq 5$ and $1\leq j \leq 4$. Notice that $\Lambda\cong R\otimes kQ'$ where $Q'$ is the quiver obtained from $Q$ by removing the loop $x_i$ at each vertex $i$. Clearly, $\Lambda$ is not hereditary and has not rank two. Moreover, since $Q'$ is not of Dynkin type, $\Lambda$ is $\tau$-tilting infinite by \cite[Cor. 1.15]{nonis2025tau}. Nevertheless, $\Lambda$ is mutation complete by Corollary \ref{Lambda_is_mutation_complete}. 
\end{example}

If $\Lambda$ is hereditary, then the mutation of ($\tau$-)exceptional sequences coincides with the classical mutation of exceptional sequences. More precisely, we have the following result. 

\begin{theorem}\label{hereditary_mutation}
    Let $\Lambda$ be hereditary. 
    \begin{enumerate}
        \item[(i)] \cite[Prop. 6.2]{BHM_mutation} Let $(B,C)$ be an exceptional pair and write $\varphi(B,C) = (C',B')$. Then $B = B'$. 
        \item[(ii)] \cite[Thm. 6.3]{BHM_mutation} Let $\M$ be a complete ($\tau$-)exceptional sequence. Then $\varphi_i(\M) = \sigma_i(\M)$ for all $1\leq i \leq n-1$. That is, the mutation of ($\tau$-)exceptional sequences \cite{BHM_mutation} coincides with the braid group action \cite{Boevey-ExcSeq, braid_group_action_on_exc_seq_Ringel} on exceptional sequences. 
    \end{enumerate}
\end{theorem}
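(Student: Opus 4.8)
The plan is to prove (i) first and then deduce (ii) from it via the uniqueness of complete $\tau$-exceptional sequences (Theorem \ref{uniqueness_property_of_tau_exc_seq}). Throughout I use the hereditary dictionary of Remark \ref{tau-exc=exc_over_hereditary}: $\tau$-exceptional sequences are exceptional sequences, $\tau$-perpendicular categories are the Geigle--Lenzing categories $X^{\perp_{0,1}}$, and these are again hereditary module categories, so the argument runs verbatim inside any iterated perpendicular category. The one computational input is that the Buan--Marsh bijection of Theorem \ref{BM_bijection} specialises to the identity on $(\proj\Lambda)[1]$ (part (b)) and agrees with the torsion-free functor $f_C$ for the torsion pair $(\Gen C,C^\perp)$ on indecomposables outside $\Gen C$ (part (c)).

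\emph{Part (i).} For an exceptional pair $(B,C)$ we have $B\in J(C)\subseteq C^\perp$, so $\Hom_\Lambda(C,B)=0$ and $B\notin\Gen C$; I must show the second entry $B'$ of $\varphi(B,C)$ equals $B$. If $(B,C)$ is left regular, then by Def.-Prop. \ref{varphi_left_regular} one has $B'=\mathcal{E}^{-1}_{C_+}(B)$, and I would show this is $B$: for $C\in\proj\Lambda$ directly from Theorem \ref{BM_bijection}(b); for $C\notin\proj\Lambda$, regularity forces $B\oplus C$ to be $\tau$-rigid (equivalently $\Ext^1_\Lambda(B,C)=0$), so $B$ lies in the domain of $\mathcal{E}_C$ and, being torsion-free in $C^\perp$, satisfies $\mathcal{E}_C(B)=f_CB=B$ by Theorem \ref{BM_bijection}(c). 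If instead $(B,C)$ is left irregular, then $V:=\mathcal{E}^{-1}_C(B)\neq B$, the module $L=V\oplus C$ is $\tau$-rigid, and by Def.-Prop. \ref{varphi_left_irregular} one has $B'=\P_{ns}(\T(J(L)))/X'$ with $X'=\P_s(\Gen\P_{ns}(\T(J(L))))$. Here I would compute the torsion class $\T(J(L))$ explicitly---$J(L)$ being a corank-two Geigle--Lenzing category---and verify that its non-split Ext-projectives are exactly $B$ together with the module inserted by the classical mutation, that $X'$ is precisely that module, and hence that the quotient is $B$. (This is borne out, for instance, by $(B,C)=(S_1,S_2)$ over $kA_3$, where $V=I_2$, $L=I_2\oplus S_2$, $\P_{ns}(\T(J(L)))=S_1\oplus I_2$, $X'=I_2$, and $B'=S_1=B$.)

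\emph{Part (ii).} Fix $i$ and set $W:=J(M_{i+2},\dots,M_n)$, which is hereditary. Both $\varphi_i(\M)$ and $\sigma_i(\M)$ are complete $\tau$-exceptional sequences---the former since $\Lambda$ is mutation complete (Theorem \ref{main_thm_BHM}(ii) via the dictionary), the latter because a classical braid mutation of a complete exceptional sequence is again one, hence $\tau$-exceptional by Remark \ref{tau-exc=exc_over_hereditary}. They agree with $\M$, and so with each other, in every entry $j\notin\{i,i+1\}$. In entry $i+1$ the sequence $\sigma_i(\M)$ has $M_i$ by the definition of the braid action recalled in this section, while $\varphi_i(\M)$ has the second entry of $\varphi^{W}(M_i,M_{i+1})$, which is $M_i$ by part (i) applied in $W$. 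Thus the two complete $\tau$-exceptional sequences agree outside the $i$-th entry, so by Theorem \ref{uniqueness_property_of_tau_exc_seq} they agree there too, giving $\varphi_i(\M)=\sigma_i(\M)$.

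\emph{The main obstacle} is the left irregular case of (i): turning the abstract recipe of Def.-Prop. \ref{varphi_left_irregular} into an explicit description of $\P_{ns}(\T(J(L)))$ and its split subobject $X'$ over the hereditary algebra, and confirming that the quotient is exactly $B$. Everything else is either a direct application of the Buan--Marsh bijection or a formal consequence of the uniqueness theorem; in particular, the uniqueness argument streamlines part (ii) once the pair-level statement (i) is in hand.
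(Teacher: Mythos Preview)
The paper does not give its own proof of this theorem: it is quoted from \cite{BHM_mutation} as Proposition 6.2 and Theorem 6.3 there, and is used as input for the $R\otimes kQ$ results. So there is no in-paper proof to compare against directly. That said, the paper does prove the exact analogs for $\Lambda=R\otimes kQ$ (Proposition \ref{mutation_pairs_in_Lambda} and Theorem \ref{mutation_in_Lambda}), and those proofs mirror \cite{BHM_mutation} step for step, so one can compare your outline to that template.

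Your overall architecture is the same as the paper's (and BHM's): prove the pair-level statement (i) by a regular/irregular case split, then deduce (ii) from (i) together with the uniqueness theorem (Theorem \ref{uniqueness_property_of_tau_exc_seq}). Your deduction of (ii) from (i) is exactly the argument the paper gives in the passage ``Proposition \ref{mutation_pairs_in_Lambda} implies Theorem \ref{mutation_in_Lambda}'', specialised to $R=k$.

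There is, however, a genuine gap in your treatment of (i). In the left regular case with $C\notin\proj\Lambda$ you assert that ``regularity forces $B\oplus C$ to be $\tau$-rigid''. This is the \emph{conclusion} one is after, not an immediate consequence of the definition, and the paper's proof of the analog (Proposition \ref{mutation_left_regular}) splits into three subcases, not two: (1) $C$ projective, (2) $C\in\Gen\mathcal{E}_C^{-1}(B)$, and (3) $C\notin\proj$ and $C\notin\Gen\mathcal{E}_C^{-1}(B)$. Your argument covers (1) and (3), but case (2) is missing; there one needs \cite[Lemma 6.3]{BHM_mutation} to conclude $\mathcal{E}_C^{-1}(B)=B$. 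For the left irregular case you correctly identify the obstacle but do not actually carry out the computation. In the paper (and in \cite{BHM_mutation}) this is done via the explicit approximation sequences $\eta,\eta'$ of Proposition \ref{analog_of_6.7_BHM} (the analog of \cite[Prop. 6.7]{BHM_mutation}): one shows $\mathcal{E}_C^{-1}(B)=E'$, that $\widetilde{L}=B\oplus E=\P_{ns}(\T(J(L)))$, and that $E=\P_s(\Gen\widetilde{L})$, whence $B'=\widetilde{L}/E=B$. Your $A_3$ example is consistent with this, but the general argument requires these approximation-sequence identifications rather than an ad hoc description of $\T(J(L))$.
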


This section aims to prove the following analog of Theorem \cite[Thm. 6.3]{BHM_mutation} for $\Lambda = R\otimes kQ$. 

\begin{theorem}\label{mutation_in_Lambda}
    Let $\Lambda = R\otimes kQ$ and let $\Lambda\otimes_{kQ}\M$ be a complete $\tau$-exceptional sequence. Then $$\varphi_i(\Lambda\otimes_{kQ}\M) = \Lambda\otimes_{kQ}\sigma_i(\M)$$ for all $1\leq i \leq n-1$. 
\end{theorem}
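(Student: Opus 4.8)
The plan is to reduce the statement to the pair case via induction on the length of the sequence, and then to use the compatibility of the $\mathcal{E}$-maps with the induction functor (Proposition \ref{E-version3.7}, Proposition \ref{indprojJ(M)1-1indprojJ(Lambda_otimes_M)}) together with the known hereditary result (Theorem \ref{hereditary_mutation}) applied inside $\modd kQ$. Concretely, write $\M = (M_1,\dots,M_n)$ and fix $i$. Since mutation $\varphi_i$ only touches the pair $(M_i',M_{i+1}')$ sitting in the iterated $\tau$-perpendicular category $J(M_{i+2},\dots,M_n)$, and the classical mutation $\sigma_i$ only touches the pair $(M_i,M_{i+1})$ inside the Geigle--Lenzing perpendicular category $J_{kQ}(M_{i+2},\dots,M_n)$, the first step is to identify these two ambient categories. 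By Theorem \ref{J(Lambda_otimes_kQ)=mod(R_otimes_kQ')} the iterated $\tau$-perpendicular subcategory $J(\Lambda\otimes_{kQ}M_{i+2},\dots,\Lambda\otimes_{kQ}M_n)$ of $\modd\Lambda$ is (via the induction functor) exactly $\Lambda'\otimes_{kQ_M}-$ of $J_{kQ}(M_{i+2},\dots,M_n)\simeq \modd kQ_M$, where $\Lambda' = R\otimes kQ_M$; here one uses that $J_{kQ}$ of an exceptional sequence over a hereditary algebra is again hereditary, so everything stays inside the class of algebras of the form $R\otimes k(\text{acyclic quiver})$. Thus the problem is reduced to: for $\Lambda' = R\otimes kQ'$ and a left mutable $\tau$-exceptional pair $(\Lambda'\otimes_{kQ'}B,\Lambda'\otimes_{kQ'}C)$ coming from an exceptional pair $(B,C)$ in $\modd kQ'$, one has $\varphi(\Lambda'\otimes_{kQ'}B,\Lambda'\otimes_{kQ'}C) = \Lambda'\otimes_{kQ'}\sigma(B,C)$, where on the right $\sigma$ is classical mutation and the pair $\sigma(B,C) = (C',B')$ has the form dictated by Theorem \ref{hereditary_mutation}(i), namely the second coordinate is $B$ itself.

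The second step is to prove this pair-level statement by tracing through the definitions of $\varphi$ (Definition-Proposition \ref{varphi_left_regular} in the left regular case, Definition-Proposition \ref{varphi_left_irregular} in the left irregular case) and checking that each constituent operation commutes with $\Lambda'\otimes_{kQ'}-$. In the left regular case, $\varphi(B,C) = (\abs{\mathcal{E}_{B_{C\uparrow}}(C_+)}, B_{C\uparrow})$, and the relevant facts are: (a) Proposition \ref{bijection_of_left_(ir)regular_pairs} says $(\Lambda'\otimes_{kQ'}B,\Lambda'\otimes_{kQ'}C)$ is left regular iff $(B,C)$ is; (b) the constructions $(-)_+$ and $(-)^+$ (shift of relative-projectives) commute with induction, because $\Lambda'\otimes_{kQ'}-$ preserves projectivity relative to $\tau$-perpendicular subcategories --- this follows from Proposition \ref{indprojJ(M)1-1indprojJ(Lambda_otimes_M)} and Theorem \ref{J(Lambda_otimes_kQ)=mod(R_otimes_kQ')} --- and it commutes with the shift $[1]$ in $\C(-)$; (c) $B_{C\uparrow} = \mathcal{E}^{-1}_{C_+}(B)$ commutes with induction by the commutative square of Proposition \ref{E-version3.7}; and (d) $\mathcal{E}_{B_{C\uparrow}}(C_+)$ commutes with induction, again by Proposition \ref{E-version3.7}. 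Putting these together gives $\varphi(\Lambda'\otimes_{kQ'}B,\Lambda'\otimes_{kQ'}C) = \Lambda'\otimes_{kQ'}\varphi(B,C)$, and by Theorem \ref{hereditary_mutation}(ii) (applied to the hereditary algebra $kQ'$) we have $\varphi(B,C) = \sigma(B,C)$, which finishes the left regular case. The left irregular case is handled the same way: one checks that $L = \mathcal{E}^{-1}_C(B)\oplus C$, the smallest torsion class $\T(J(L))$, and the operators $\P_s, \P_{ns}, \Gen$ all commute with $\Lambda'\otimes_{kQ'}-$. Commutation with $\P_s$ and $\P_{ns}$ reduces to the fact that induction preserves split vs.\ non-split Ext-projectivity in functorially finite torsion classes, which in turn follows from $\Lambda'$ being flat over $kQ'$ (so $\Lambda'\otimes_{kQ'}-$ is exact) together with Proposition \ref{tau-rigid-Lambda1:1tau-rigid-kQ}; commutation with $\Gen$ and with $\T(-) = \Filt\Gen$ is automatic from exactness and from the bijection of torsion classes implicit in Theorem \ref{bijection_of_tau-perp_subcategories}.

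The third step assembles the induction. Once one knows $\varphi_i$ agrees with $\Lambda\otimes_{kQ}-$ applied to $\sigma_i$ on the coordinates $i, i+1$ relative to the common ambient category, and since $\varphi_i$ (resp. $\sigma_i$) leaves the remaining coordinates unchanged while $\Lambda\otimes_{kQ}-$ is a bijection on $\tau$-exceptional sequences (Theorem \ref{bijection_of_tau_exc_seq}), the equality $\varphi_i(\Lambda\otimes_{kQ}\M) = \Lambda\otimes_{kQ}\sigma_i(\M)$ follows. The uniqueness property of $\tau$-exceptional sequences (Theorem \ref{uniqueness_property_of_tau_exc_seq}) can be invoked to streamline the verification: it suffices to check that the two sides agree in all but one coordinate, and then they automatically agree in the last one. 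The main obstacle I anticipate is the careful bookkeeping in Step 2 for the left irregular case --- specifically verifying that the split/non-split decomposition $X' = \P_s(\Gen\P_{ns}(\T(J(L))))$ is genuinely preserved by the induction functor. This requires knowing not just that induction sends Ext-projectives to Ext-projectives but that it respects the \emph{finer} dichotomy between split and non-split Ext-projective summands of a torsion class; this should follow because $\Lambda'\otimes_{kQ'}-$ induces a bijection of torsion classes preserving containment (hence preserving the ``minimal'' Ext-projective data), but making this precise, and checking it interacts correctly with $\Gen$, is where the real work lies. The rest is formal diagram-chasing on the commutative squares already recorded in Section \ref{sec1}.
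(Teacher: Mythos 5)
Your proposal is correct and follows essentially the same route as the paper: reduce to the pair case inside the iterated $\tau$-perpendicular category (identified as $\modd(R\otimes kQ_M)$ via Theorem \ref{J(Lambda_otimes_kQ)=mod(R_otimes_kQ')}), split into left regular and left irregular pairs, show each ingredient of $\varphi$ commutes with the induction functor using Propositions \ref{E-version3.7} and \ref{indprojJ(M)1-1indprojJ(Lambda_otimes_M)}, and invoke Theorem \ref{hereditary_mutation} together with the uniqueness property (Theorem \ref{uniqueness_property_of_tau_exc_seq}) to pin down the remaining coordinate. The point you flag as ``where the real work lies'' --- that induction respects the split/non-split Ext-projective decomposition in the irregular case --- is exactly what the paper settles via its Lemma \ref{gen-min-tau-rigid} and Proposition \ref{analog_of_6.7_BHM}.
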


The strategy to prove Theorem \ref{mutation_in_Lambda} is similar to the one used to prove \cite[Thm. 6.3]{BHM_mutation}. Indeed, Theorem \ref{mutation_in_Lambda} can be deduced from the following analog of \cite[Prop. 6.2]{BHM_mutation}. 

\begin{proposition}\label{mutation_pairs_in_Lambda}
    Let $\Lambda = R\otimes kQ$. Let $(\Lambda\otimes_{kQ}B,\Lambda\otimes_{kQ}C)$ be a $\tau$-exceptional pair and write $\varphi(\Lambda\otimes_{kQ}B,\Lambda\otimes_{kQ}C) = ((\Lambda\otimes_{kQ}C)',(\Lambda\otimes_{kQ}B)')$. Then $(\Lambda\otimes_{kQ}B)'=\Lambda\otimes_{kQ}B$.
\end{proposition}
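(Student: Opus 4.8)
The plan is to reduce the statement about $\Lambda$ to the already-known hereditary statement \cite[Prop. 6.2]{BHM_mutation} for $kQ$, using the compatibility of the induction functor with all the constructions that enter the definition of $\varphi$. Recall from Definition-Proposition \ref{varphi_left_regular} and \ref{varphi_left_irregular} that $\varphi(B,C)$ is built out of: the operations $(-)_+$, $(-)^+$ (which depend only on whether a module is relative projective in the relevant category, and hence are preserved under $\Lambda\otimes_{kQ}-$ by Proposition \ref{indprojJ(M)1-1indprojJ(Lambda_otimes_M)}); the bijection $\mathcal E_U$ and its inverse (preserved by Proposition \ref{E-version3.7}); and, in the left-irregular case, the torsion class $\T(J(L))$ together with the split/non-split Ext-projective decomposition $\P_{ns}$, $\P_s$, $\Gen$. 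So the first step is to fix a left mutable $\tau$-exceptional pair $(\Lambda\otimes_{kQ}B,\Lambda\otimes_{kQ}C)$; by Theorem \ref{bijection_of_tau_exc_seq} it is the image of a unique $(\tau$-$)$exceptional pair $(B,C)$ in $\modd kQ$, which is left mutable by Proposition \ref{left-right_mutable_pair_for_Lambda}, and left regular (resp.\ irregular) exactly when $(\Lambda\otimes_{kQ}B,\Lambda\otimes_{kQ}C)$ is, by Proposition \ref{bijection_of_left_(ir)regular_pairs}.

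Next I would treat the two cases. In the \emph{left regular} case, $\varphi(B,C)=(\abs{\mathcal E_{B_{C\uparrow}}(C_+)},\,B_{C\uparrow})$ with $B_{C\uparrow}=\mathcal E^{-1}_{C_+}(B)$. Applying $\Lambda\otimes_{kQ}-$ and using Proposition \ref{E-version3.7} together with the fact that $\Lambda\otimes_{kQ}(C_+)=(\Lambda\otimes_{kQ}C)_+$ (since $C\in\proj kQ$ iff $\Lambda\otimes_{kQ}C\in\proj\Lambda$) gives $\Lambda\otimes_{kQ}(B_{C\uparrow})=(\Lambda\otimes_{kQ}B)_{(\Lambda\otimes_{kQ}C)\uparrow}$, and hence $\varphi(\Lambda\otimes_{kQ}B,\Lambda\otimes_{kQ}C)=\Lambda\otimes_{kQ}\varphi(B,C)$; in particular the second component is $\Lambda\otimes_{kQ}(B_{C\uparrow})$, which by \cite[Prop. 6.2]{BHM_mutation} equals $\Lambda\otimes_{kQ}B$. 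In the \emph{left irregular} case, $\varphi(B,C)=(\mathcal E_Y(X'),Y)$ with $L=\mathcal E^{-1}_C(B)\oplus C$, $X'=\P_s(\Gen\P_{ns}(\T(J(L))))$ and $Y=\P_{ns}(\T(J(L)))/X'$. Here I would use $J(\Lambda\otimes_{kQ}B,\Lambda\otimes_{kQ}C)=J(\Lambda\otimes_{kQ}L)=\Lambda\otimes_{kQ}J(L)$ under the bijection of Theorem \ref{bijection_of_tau-perp_subcategories}(i), together with the fact that the induction functor is exact and sends $\tau$-rigid modules to $\tau$-rigid modules and Ext-projectives to Ext-projectives, to conclude that $\Lambda\otimes_{kQ}-$ commutes with $\T(-)$, $\P_{ns}$, $\Gen$, and $\P_s$ applied inside these categories; the splitting $\P_{ns}(\T(J(L)))=X'\oplus Y$ is then induced from the one in $\modd kQ$, so $\Lambda\otimes_{kQ}Y=(\Lambda\otimes_{kQ}B)'$ and again \cite[Prop. 6.2]{BHM_mutation} for $kQ$ gives $\Lambda\otimes_{kQ}Y=\Lambda\otimes_{kQ}B$.

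Finally, Theorem \ref{mutation_in_Lambda} follows from Proposition \ref{mutation_pairs_in_Lambda} exactly as \cite[Thm. 6.3]{BHM_mutation} follows from \cite[Prop. 6.2]{BHM_mutation}: for a complete $\tau$-exceptional sequence $\Lambda\otimes_{kQ}\M$, the $i$-th left mutation only modifies the pair $(\Lambda\otimes_{kQ}M_i,\Lambda\otimes_{kQ}M_{i+1})$ inside $J(\Lambda\otimes_{kQ}M_{i+2},\dots,\Lambda\otimes_{kQ}M_n)=\Lambda\otimes_{kQ}J(M_{i+2},\dots,M_n)$; by the case analysis above this iterated $\tau$-perpendicular category is of the form $R\otimes kQ''$ and $\varphi^{J(\dots)}(\Lambda\otimes_{kQ}M_i,\Lambda\otimes_{kQ}M_{i+1})=\Lambda\otimes_{kQ}\varphi^{J(M_{i+2},\dots,M_n)}(M_i,M_{i+1})$, whose second component is $\Lambda\otimes_{kQ}M_i$ by Proposition \ref{mutation_pairs_in_Lambda}. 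Hence $\varphi_i(\Lambda\otimes_{kQ}\M)$ and $\Lambda\otimes_{kQ}\sigma_i(\M)$ agree in all components except possibly the $i$-th, and both are complete $\tau$-exceptional sequences, so they agree by the uniqueness property Theorem \ref{uniqueness_property_of_tau_exc_seq}.

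The step I expect to be the main obstacle is the left-irregular case of Proposition \ref{mutation_pairs_in_Lambda}: one must check carefully that $\Lambda\otimes_{kQ}-$ commutes with the formation of the smallest torsion class $\T(J(L))$ and, more delicately, with the split/non-split decomposition of its Ext-projectives, i.e.\ that the induction functor sends split Ext-projectives to split Ext-projectives and respects $\Gen$ inside $\T(J(L))$. This should follow from exactness of $\Lambda\otimes_{kQ}-$, from the description of $\P(^\perp\tau M)$ and its splitting in Proposition \ref{indprojJ(M)1-1indprojJ(Lambda_otimes_M)}, and from Theorem \ref{J(Lambda_otimes_kQ)=mod(R_otimes_kQ')} identifying $J(\Lambda\otimes_{kQ}L)$ with $\modd(R\otimes kQ_L)$, but it will require spelling out why Ext-projectivity and the ``split'' condition are detected after applying the induction functor.
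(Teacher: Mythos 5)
Your overall architecture is the same as the paper's: reduce to the hereditary statement \cite[Prop.~6.2]{BHM_mutation} via the induction functor, split into the left regular and left irregular cases using Proposition \ref{bijection_of_left_(ir)regular_pairs}, and in the regular case combine Proposition \ref{E-version3.7} with the preservation of projectivity to get $\Lambda\otimes_{kQ}(B_{C\uparrow})=(\Lambda\otimes_{kQ}B)_{(\Lambda\otimes_{kQ}C)\uparrow}$. That part of your argument is sound (the paper reaches the same conclusion by a three-way case split on whether $C$ is projective and whether $C\in\Gen\mathcal{E}^{-1}_C(B)$, but your route through $\mathcal{E}^{-1}_{\Lambda\otimes_{kQ}C}(\Lambda\otimes_{kQ}B)\cong\Lambda\otimes_{kQ}\mathcal{E}^{-1}_C(B)$ and then $\mathcal{E}^{-1}_C(B)=B$ from the hereditary result is equivalent).

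The genuine gap is exactly where you flag it: in the left irregular case, the assertion that $\Lambda\otimes_{kQ}-$ ``commutes with $\T(-)$, $\P_{ns}$, $\Gen$, and $\P_s$'' is not a formal consequence of exactness of induction, and it is the entire technical content of the proof. Note that $\T(J(\Lambda\otimes_{kQ}L))$ is the smallest torsion class in $\modd\Lambda$ containing the wide subcategory $J(\Lambda\otimes_{kQ}L)$; its objects, and a priori its Ext-projectives, need not be induced modules, so there is no category ``$\Lambda\otimes_{kQ}J(L)$'' in which one can simply transport the decomposition $\P_{ns}=X'\oplus Y$. The paper instead proves an analog of \cite[Prop.~6.7]{BHM_mutation} (Proposition \ref{analog_of_6.7_BHM}): it identifies $\mathcal{E}^{-1}_{\Lambda\otimes_{kQ}C}(\Lambda\otimes_{kQ}B)$ with $\Lambda\otimes_{kQ}E'$ via the induced short exact sequences, shows $\Lambda\otimes_{kQ}L$ is gen-minimal $\tau$-rigid and $\tau(\Lambda\otimes_{kQ}\widetilde{L})$ is cogen-minimal $\tau$-rigid (this last step needs the restriction-of-scalars trick to rule out $\tau(\Lambda\otimes_{kQ}E)\in\Cogen\tau(\Lambda\otimes_{kQ}B)$), establishes that $J(\Lambda\otimes_{kQ}L)$ is sincere and left finite, and only then computes $\P_{ns}(\T(J(\Lambda\otimes_{kQ}L)))=\tau^{-1}\mathcal{I}_s(J(\Lambda\otimes_{kQ}L)^{\perp})=\Lambda\otimes_{kQ}\widetilde{L}$ using \cite[Prop.~2.13, Prop.~3.10(a')]{BHM_mutation}. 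After that, $X'=\Lambda\otimes_{kQ}E$ is identified via \cite[Prop.~3.13(g)]{BHM_mutation} and $\Lambda\otimes_{kQ}B\in\Gen(\Lambda\otimes_{kQ}E)$, and the quotient gives $(\Lambda\otimes_{kQ}B)'=\Lambda\otimes_{kQ}B$. So your plan is the right one, but the irregular case as written is a statement of what needs to be proved rather than a proof of it; filling it in requires essentially reproving \cite[Prop.~6.7]{BHM_mutation} for $\Lambda$ rather than deducing it by a commutation principle.
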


\begin{proof}[Proposition \ref{mutation_pairs_in_Lambda} implies Theorem \ref{mutation_in_Lambda}]
    Let $\widetilde{\M}$ be a complete $\tau$-exceptional sequence in $\modd\Lambda$. Then $\widetilde{\M} = \Lambda\otimes_{kQ}\M$ for a unique ($\tau$-)exceptional sequence $\M = (M_1,\cdots, M_n)$ in $\modd kQ$ by Theorem \ref{bijection_of_tau_exc_seq}. Let $i\in\{1,\cdots, n-1\}$. Since $J(M_{i+1},\cdots, M_n)\simeq \modd(R\otimes kQ')$ for an acyclic quiver $Q'$ by Theorem \ref{J(Lambda_otimes_kQ)=mod(R_otimes_kQ')}, we have that 
    \begin{align*}
        \varphi_i(\widetilde{\M}) &= \varphi_i(\Lambda\otimes_{kQ}\M)\\
                                  &= (\Lambda\otimes_{kQ}M_1,\cdots, \Lambda\otimes_{kQ}M_{i-1}, \widetilde{X}, \Lambda\otimes_{kQ}M_{i},\cdots, \Lambda\otimes_{kQ}M_n) &&\text{by Prop. \ref{mutation_pairs_in_Lambda}}\\
                                  &= (\Lambda\otimes_{kQ}M_1,\cdots, \Lambda\otimes_{kQ}M_{i-1}, \Lambda\otimes_{kQ}X, \Lambda\otimes_{kQ}M_{i},\cdots, \Lambda\otimes_{kQ}M_n) &&\text{by Thm. \ref{bijection_of_tau_exc_seq}}.
    \end{align*}
    
    On the other hand, $\varphi_i(\M)=(M_1,\cdots,M_{i-1}, X', M_i,\cdots, M_n)$ by Theorem \ref{hereditary_mutation}(ii). Applying the induction functor to $\varphi_i(\M)$ we obtain a $\tau$-exceptional sequence 
    $$\Lambda\otimes_{kQ}\varphi_i(\M) = (\Lambda\otimes_{kQ}M_1,\cdots,\Lambda\otimes_{kQ}M_{i-1},\Lambda\otimes_{kQ}X',\Lambda\otimes_{kQ}M_i,\cdots,\Lambda\otimes_{kQ}M_n)$$ in $\bmod\Lambda$. Theorem \ref{uniqueness_property_of_tau_exc_seq} implies that $\Lambda\otimes_{kQ}X\cong \Lambda\otimes_{kQ}X'$. The result follows.    
\end{proof}

For the rest of this section, let $\Lambda = R\otimes kQ$. Following the argument in \cite[Section 6]{BHM_mutation}, we prove Proposition \ref{mutation_pairs_in_Lambda} in several steps. Notice that every $\tau$-exceptional pair $(\Lambda\otimes_{kQ}B, \Lambda\otimes_{kQ}C)$ in $\modd\Lambda$ is mutable by Corollary \ref{Lambda_is_mutation_complete}. We start by proving the claim for the case where $(\Lambda\otimes_{kQ}B, \Lambda\otimes_{kQ}C)$ is left regular. We need the following observation first. 

\begin{lemma}\label{Gen_Lemma}
    Let $\Lambda = R\otimes kQ$ and let $M,N\in \modd kQ$. Then, $\Lambda\otimes_{kQ}N\in \Gen(\Lambda\otimes_{kQ}M)$ if and only if $N\in \Gen M$. 
\end{lemma}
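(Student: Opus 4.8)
The plan is to prove Lemma \ref{Gen_Lemma} by exploiting the right exactness and faithful exactness properties of the induction functor $\Lambda \otimes_{kQ} -$, together with the fact that $\Lambda$ is free (hence flat and faithfully flat) as a right $kQ$-module. Concretely, $\Lambda = R \otimes_k kQ \cong R \otimes_k kQ$ is a free right $kQ$-module with basis a $k$-basis of $R$, so tensoring up preserves and reflects surjections, direct sums, and (by flatness) the property of being a subquotient.

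\medskip

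First I would prove the \emph{if} direction. Suppose $N \in \Gen M$, so there is a surjection $M^{\oplus r} \twoheadrightarrow N$ in $\modd kQ$ for some $r \geq 0$. Applying the induction functor, which is right exact and commutes with finite direct sums, yields a surjection $(\Lambda \otimes_{kQ} M)^{\oplus r} \twoheadrightarrow \Lambda \otimes_{kQ} N$ in $\modd \Lambda$; hence $\Lambda \otimes_{kQ} N \in \Gen(\Lambda \otimes_{kQ} M)$. This direction requires no hypotheses beyond right exactness.

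\medskip

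For the \emph{only if} direction, suppose $\Lambda \otimes_{kQ} N \in \Gen(\Lambda \otimes_{kQ} M)$, so there is a surjection $g \colon (\Lambda \otimes_{kQ} M)^{\oplus r} \twoheadrightarrow \Lambda \otimes_{kQ} N$ of $\Lambda$-modules. The key point is to produce from this a surjection $M^{\oplus s} \twoheadrightarrow N$ of $kQ$-modules. I see two natural routes. One route is to restrict scalars: since $R$ is local with residue field $k$ (as $k$ is algebraically closed and $R$ is finite-dimensional commutative local), the algebra $kQ$ embeds in $\Lambda$ via $1_R \otimes -$, and there is a ring surjection $\Lambda \to kQ$ sending the maximal ideal $\mathfrak{m}_R \otimes kQ$ to zero; moreover $\Lambda \otimes_{kQ} M \cong R \otimes_k M$ as a $kQ$-module is just a direct sum of $\dim_k R$ copies of $M$ (using that the $kQ$-action on $R \otimes_k M$ is diagonal through the comultiplication-free structure — more precisely $R$ is a $k$-algebra acting trivially from the $kQ$ side). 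Thus, as $kQ$-modules, $\Lambda \otimes_{kQ} M \cong M^{\oplus d}$ with $d = \dim_k R$, and likewise $\Lambda \otimes_{kQ} N \cong N^{\oplus d}$; the surjection $g$ of $\Lambda$-modules is in particular a surjection $M^{\oplus rd} \twoheadrightarrow N^{\oplus d}$ of $kQ$-modules, and composing with the projection onto one factor of $N$ gives the desired surjection $M^{\oplus rd} \twoheadrightarrow N$, i.e. $N \in \Gen M$. The alternative route applies $k \otimes_R -$ (i.e. $- / \mathfrak{m}_R(-)$) to $g$: this is right exact, and $k \otimes_R (\Lambda \otimes_{kQ} X) \cong kQ \otimes_{kQ} X \cong X$ naturally for $X \in \modd kQ$, so we again obtain a surjection $M^{\oplus r} \twoheadrightarrow N$ of $kQ$-modules.

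\medskip

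The main obstacle — really the only thing requiring care — is making precise the $kQ$-module structure on $\Lambda \otimes_{kQ} X = (R \otimes_k kQ) \otimes_{kQ} X \cong R \otimes_k X$: one must check that the residual $kQ$-action (through the right tensor factor after identifying $\Lambda \otimes_{kQ} X \cong R \otimes_k X$) is the diagonal one acting only on $X$, with $R$ a ``passive'' coefficient space, so that indeed $\Lambda \otimes_{kQ} X \cong X^{\oplus \dim_k R}$ as $kQ$-modules and $k \otimes_R (\Lambda \otimes_{kQ} X) \cong X$. This is a routine identification following from the isomorphism $\Lambda \cong RQ$ and the structure of $R$ as a local $k$-algebra with residue field $k$, and it is implicit in the setup of \cite{nonis2025tau}; once it is in place, both directions of the equivalence are immediate from right exactness of the relevant tensor functors.
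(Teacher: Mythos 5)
Your proposal is correct and follows essentially the same route as the paper: the ``if'' direction by applying the exact induction functor to a surjection $M^{\oplus r}\twoheadrightarrow N$, and the ``only if'' direction by restriction of scalars, using that $\Lambda\otimes_{kQ}X\cong X^{\oplus d}$ as a $kQ$-module with $d=\dim_k R$ to turn a surjection $(\Lambda\otimes_{kQ}M)^{\oplus r}\twoheadrightarrow\Lambda\otimes_{kQ}N$ into a surjection $M^{\oplus rd}\twoheadrightarrow N$. Your extra care about the residual $kQ$-action on $R\otimes_k X$, and the alternative route via $k\otimes_R-$, are fine but not needed beyond what the paper already does.
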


\begin{proof}
    Let $\Lambda\otimes_{kQ}N\in \Gen(\Lambda\otimes_{kQ}M)$. Then, there exists a surjective map $(\Lambda\otimes_{kQ}M)^r\cong \Lambda\otimes_{kQ}M^r\to \Lambda\otimes_{kQ}N$, for some $r\geq 1$. Applying the restriction of scalars, we obtain a surjective map $M^{rd}\to N$, where $d=\dim R$. In other words, $N$ lies in $\Gen M$. Conversely, let $N\in \Gen M$, i.e. there exists an epimorphism $M^s\to N$ for some $s\geq 1$. Since the induction functor is exact, we get an epimorphism $\Lambda\otimes_{kQ}M^s \cong (\Lambda\otimes_{kQ}M)^s\to \Lambda\otimes_{kQ}N$, and therefore $\Lambda\otimes_{kQ}N\in \Gen(\Lambda\otimes_{kQ}M)$.  
\end{proof}

\begin{proposition}\label{mutation_left_regular}
    Let $(\Lambda\otimes_{kQ}B, \Lambda\otimes_{kQ}C)$ be a left regular ($\tau$-)exceptional pair. Then $\varphi(\Lambda\otimes_{kQ}B,\Lambda\otimes_{kQ}C) = ((\Lambda\otimes_{kQ}C)', \Lambda\otimes_{kQ}B)$. 
\end{proposition}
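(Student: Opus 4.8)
The plan is to reduce the statement about $\Lambda = R\otimes kQ$ to the corresponding known statement over $kQ$, using the compatibility of the induction functor with all the constructions entering the definition of $\varphi$. By Theorem \ref{hereditary_mutation}(i), over $kQ$ we have $\varphi(B,C) = (C', B)$ for a left regular (hence exceptional, since $kQ$ is hereditary) pair $(B,C)$; the goal is to transport this equality through $\Lambda\otimes_{kQ}-$. The second coordinate of $\varphi(\Lambda\otimes_{kQ}B,\Lambda\otimes_{kQ}C)$ is, by Definition-Proposition \ref{varphi_left_regular}, exactly $(\Lambda\otimes_{kQ}B)_{(\Lambda\otimes_{kQ}C)\uparrow} = \mathcal{E}^{-1}_{(\Lambda\otimes_{kQ}C)_+}(\Lambda\otimes_{kQ}B)$, so everything reduces to showing this equals $\Lambda\otimes_{kQ}\bigl(B_{C\uparrow}\bigr) = \Lambda\otimes_{kQ}B$.

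First I would unpack the notation $N_+$: we have $(\Lambda\otimes_{kQ}C)_+ = (\Lambda\otimes_{kQ}C)[1]$ precisely when $\Lambda\otimes_{kQ}C\in\proj\Lambda$, and by (the induced-module analog of) Proposition \ref{tau-rigid-Lambda1:1tau-rigid-kQ} together with the behaviour of the induction functor, $\Lambda\otimes_{kQ}C\in\proj\Lambda$ if and only if $C\in\proj kQ$; in either case $(\Lambda\otimes_{kQ}C)_+ = \Lambda\otimes_{kQ}(C_+)$ as objects of $\C(\Lambda)$, where on the right we interpret $\Lambda\otimes_{kQ}-$ on $\C(kQ)$ in the obvious degreewise way. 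Then I would invoke Proposition \ref{E-version3.7}: with $U = C_+$ and $V = B$ (which form a support $\tau$-rigid object of $\C(kQ)$ since $(B,C)$ is a $\tau$-exceptional, hence $\tau$-rigid, pair and passing to $B$ preserves this), the commuting square of bijections gives
\[
\mathcal{E}^{-1}_{\Lambda\otimes_{kQ}(C_+)}\bigl(\Lambda\otimes_{kQ}B\bigr) \;\cong\; \Lambda\otimes_{kQ}\,\mathcal{E}^{-1}_{C_+}(B) \;=\; \Lambda\otimes_{kQ}\bigl(B_{C\uparrow}\bigr).
\]
By Theorem \ref{hereditary_mutation}(i), $B_{C\uparrow} = B$ (this is exactly the content of $\varphi(B,C) = (C',B)$ together with the definition of $\varphi$ on left regular pairs), so the second coordinate is $\Lambda\otimes_{kQ}B$, as claimed.

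I would also need to check that $(B,C)$ is left regular as a pair over $kQ$ before applying Theorem \ref{hereditary_mutation}(i): this is precisely Proposition \ref{bijection_of_left_(ir)regular_pairs}, which says the induction functor restricts to a bijection on left regular pairs, so left regularity of $(\Lambda\otimes_{kQ}B,\Lambda\otimes_{kQ}C)$ forces left regularity of $(B,C)$. The main obstacle I anticipate is purely bookkeeping: making sure that the object $\mathcal{E}^{-1}_{(\Lambda\otimes_{kQ}C)_+}(\Lambda\otimes_{kQ}B)$ appearing in $\varphi(\Lambda\otimes_{kQ}B,\Lambda\otimes_{kQ}C)$ is the same bijection $\mathcal{E}$ (not $\mathcal{E}^d$ or a shifted variant) to which Proposition \ref{E-version3.7} applies, and handling the case distinction $C\in\proj kQ$ versus not uniformly — in the projective case one uses Theorem \ref{BM_bijection}(b) ($\mathcal{E}_{P[1]}^{-1}$ is the identity on $\tau_{J(P[1])}$-rigid modules) on both sides and checks compatibility with $\Lambda\otimes_{kQ}-$, which again follows from Proposition \ref{E-version3.7}. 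Once these identifications are in place the first coordinate $(\Lambda\otimes_{kQ}C)' = \lvert\mathcal{E}_{(\Lambda\otimes_{kQ}B)_{(\Lambda\otimes_{kQ}C)\uparrow}}((\Lambda\otimes_{kQ}C)_+)\rvert$ is whatever it is — the proposition only asserts the second coordinate, so no further work is required there, though one could note it equals $\Lambda\otimes_{kQ}C'$ by the same Proposition \ref{E-version3.7} argument.
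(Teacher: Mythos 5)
Your argument is correct and lands on the same underlying ingredients as the paper, but it is packaged differently. The paper's proof proceeds by a three-way case analysis --- $\Lambda\otimes_{kQ}C$ projective, $\Lambda\otimes_{kQ}C\in\Gen\mathcal{E}^{-1}_{\Lambda\otimes_{kQ}C}(\Lambda\otimes_{kQ}B)$, and neither --- identifying $\mathcal{E}^{-1}$ concretely in each case (as the identity via Theorem \ref{BM_bijection}(b), via Lemma 6.3 of Buan--Hanson--Marsh, or as $f^{-1}_{\Lambda\otimes_{kQ}C}$ via Theorem \ref{BM_bijection}(c)) before transporting through the induction functor. You instead apply the commutative square of Proposition \ref{E-version3.7} once, uniformly, with $U=C_+$, and then quote Theorem \ref{hereditary_mutation}(i) to conclude $B_{C\uparrow}=B$ over $kQ$ (after transporting left regularity via Proposition \ref{bijection_of_left_(ir)regular_pairs}). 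This buys a case-free proof at the cost of using the hereditary statement as a black box. One repair is needed: your parenthetical justification that $C_+\oplus B$ is support $\tau$-rigid ``since $(B,C)$ is a $\tau$-exceptional, hence $\tau$-rigid, pair'' is not valid --- a $\tau$-exceptional pair need not be a $\tau$-rigid pair, and when $C\notin\proj kQ$ one can have $\Ext^1_{kQ}(B,C)\neq 0$, equivalently $\Hom_{kQ}(C,\tau B)\neq 0$. This does not sink the argument: to run the square of Proposition \ref{E-version3.7} in the inverse direction you only need $B$ to lie in the right-hand set of the diagram, i.e.\ $B$ support $\tau_{J(C_+)}$-rigid, which holds because $(B,C)$ is a $\tau$-exceptional pair and $J(C_+)=J(C)$; the support $\tau$-rigidity of $B\oplus C_+$ is then a consequence of $\mathcal{E}^{-1}_{C_+}(B)=B$ rather than a hypothesis. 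With that sentence corrected, your proof is complete.
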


\begin{proof}
    Recall from Definition-Proposition \ref{varphi_left_regular} that $$(\Lambda\otimes_{kQ}B)' = (\Lambda\otimes_{kQ}B)_{(\Lambda\otimes_{kQ}C)\uparrow} = \mathcal{E}^{-1}_{(\Lambda\otimes_{kQ}C)_+}(\Lambda\otimes_{kQ}B).$$
    We distinguish three cases. 
    \begin{enumerate}
        \item[(1)] $\Lambda\otimes_{kQ}C\in \proj\Lambda$. Then $(\Lambda\otimes_{kQ}B)_{(\Lambda\otimes_{kQ}C)\uparrow} = \mathcal{E}^{-1}_{(\Lambda\otimes_{kQ}C)[1]}(\Lambda\otimes_{kQ}B) = \Lambda\otimes_{kQ}B$; see \cite[Thm. 2.8]{BHM_mutation}. 
        
         \item[(2)] $\Lambda\otimes_{kQ}C\in \Gen\mathcal{E}^{-1}_{\Lambda\otimes_{kQ}C}(\Lambda\otimes_{kQ}B)$.  By Proposition \ref{E-version3.7}, $\Eps^{-1}_{\Lambda\otimes_{kQ}C}(\Lambda\otimes_{kQ}B)\cong \Lambda\otimes_{kQ}\Eps^{-1}_C(B)$. Hence, $\Gen\mathcal{E}^{-1}_{\Lambda\otimes_{kQ}C}(\Lambda\otimes_{kQ}B) = \Gen(\Lambda\otimes_{kQ}\Eps_C^{-1}(B))$, and thus $\Lambda\otimes_{kQ}C\in \Gen\Eps^{-1}_{\Lambda\otimes_{kQ}C}(\Lambda\otimes_{kQ}B)$ implies $C\in \Gen\mathcal{E}^{-1}_{C}(B)$ by Lemma \ref{Gen_Lemma}. Then, 
            $$(\Lambda\otimes_{kQ}B)' = \Eps^{-1}_{\Lambda\otimes_{kQ}C}(\Lambda\otimes_{kQ}B) \cong \Lambda\otimes_{kQ}\Eps^{-1}_C(B) = \Lambda\otimes_{kQ}B$$
        where the last equality follows from \cite[Lemma 6.3]{BHM_mutation}.         
         \item[(3)]  $\Lambda\otimes_{kQ}C\notin \proj\Lambda$ and $\Lambda\otimes_{kQ}C\notin \Gen\mathcal{E}^{-1}_{\Lambda\otimes_{kQ}C}(\Lambda\otimes_{kQ}B)$. Then $(\Lambda\otimes_{kQ}C)_+ = \Lambda\otimes_{kQ}C$. Hence, 
        \begin{align*}
            \mathcal{E}^{-1}_{\Lambda\otimes_{kQ}C}(\Lambda\otimes_{kQ}B) &= f^{-1}_{\Lambda\otimes_{kQ}C}(\Lambda\otimes_{kQ}B) && \text{by Thm. \ref{BM_bijection}(c)}\\
            & =\Lambda\otimes_{kQ}f^{-1}_C(B) && \text{by Prop. \ref{E-version3.7}}\\
            & =\Lambda\otimes_{kQ}B &&\text{by \cite[Prop. 6.2]{BHM_mutation}}.
        \end{align*}
    \end{enumerate}
    This finishes the proof.
\end{proof}

We are left to prove Proposition \ref{mutation_pairs_in_Lambda} for $(\Lambda\otimes_{kQ}B,\Lambda\otimes_{kQ}C)$ being left irregular.

Recall that a module $M$ is called \textit{gen-minimal} if, for any proper direct summand $M'$ of $M$, we have $\Gen M'\subsetneq \Gen M$. \textit{Cogen-minimal} modules are defined dually. Moreover, the assignment $\T\mapsto \P_s(\T)$ defines a bijection between functorially finite torsion classes and gen-minimal $\tau$-rigid modules; see \cite[Thm. 1.6]{BHM_mutation}. We have the following observation.

\begin{lemma}\label{gen-min-tau-rigid}
    Let $U$ be a basic, $\tau$-rigid $kQ$-module. Then, the induction functor $\Lambda\otimes_{kQ}-$ induces a bijection between the set of isoclasses of gen-minimal $\tau$-rigid modules in $J(U)\subseteq\modd kQ$ and the set of isoclasses of gen-minimal $\tau$-rigid modules in $J(\Lambda\otimes_{kQ}U)\subseteq \modd \Lambda$. 
\end{lemma}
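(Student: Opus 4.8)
The plan is to transport the bijection between functorially finite torsion classes and gen-minimal $\tau$-rigid modules (\cite[Thm. 1.6]{BHM_mutation}) through the induction functor, using the already established compatibilities. First I would recall the setup: by Theorem \ref{J(Lambda_otimes_kQ)=mod(R_otimes_kQ')} we have equivalences $J(U)\simeq \modd{kQ_U}$ and $J(\Lambda\otimes_{kQ}U)\simeq \modd(R\otimes kQ_U)$, and by Theorem \ref{bijection_of_tau-perp_subcategories}(i) the assignment $J(N)\mapsto J(\Lambda\otimes_{kQ}N)$ matches $\tau$-perpendicular subcategories on both sides. So it suffices to prove the statement in the case $U=0$, i.e. that $\Lambda\otimes_{kQ}-$ induces a bijection between gen-minimal $\tau$-rigid $kQ$-modules and gen-minimal $\tau$-rigid $\Lambda$-modules; the general case then follows by applying this to the acyclic quiver $Q_U$ in place of $Q$, together with the identification of the relevant $\tau$-perpendicular subcategories. (One should double-check that ``$\tau$-rigid in $J(U)$'' agrees with ``$\tau_{J(U)}$-rigid under the equivalence $J(U)\simeq\modd kQ_U$'', which is immediate.)

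For the base case, I would argue as follows. By Proposition \ref{tau-rigid-Lambda1:1tau-rigid-kQ}, the induction functor already gives a bijection on indecomposable $\tau$-rigid modules, hence on basic $\tau$-rigid modules (it is additive and sends indecomposables to indecomposables, preserving non-isomorphism). So the only thing to check is that this bijection restricts to gen-minimal objects, i.e. that a basic $\tau$-rigid $kQ$-module $U'$ is gen-minimal if and only if $\Lambda\otimes_{kQ}U'$ is gen-minimal. By Lemma \ref{Gen_Lemma}, for any direct summands $M,N$ of $U'$ (indeed any modules) we have $N\in\Gen M \iff \Lambda\otimes_{kQ}N\in\Gen(\Lambda\otimes_{kQ}M)$. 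Since every direct summand of $\Lambda\otimes_{kQ}U'$ is of the form $\Lambda\otimes_{kQ}U''$ for a direct summand $U''$ of $U'$ (again by Proposition \ref{tau-rigid-Lambda1:1tau-rigid-kQ}, which matches indecomposable summands), the condition ``$\Gen U''\subsetneq \Gen U'$ for every proper summand $U''$'' translates verbatim into ``$\Gen(\Lambda\otimes_{kQ}U'')\subsetneq \Gen(\Lambda\otimes_{kQ}U')$ for every proper summand of $\Lambda\otimes_{kQ}U'$.'' This gives the equivalence. Alternatively, one could phrase this via the bijection $\T\mapsto\P_s(\T)$ and the known compatibility of $\Lambda\otimes_{kQ}-$ with functorially finite torsion classes, but the direct argument via Lemma \ref{Gen_Lemma} is cleanest.

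The main (minor) obstacle is bookkeeping: making sure that ``gen-minimal $\tau$-rigid in $J(U)$'' is interpreted correctly — gen-minimality is a statement about the ambient category $J(U)$, so $\Gen$ should be computed inside $J(U)$, not inside $\modd kQ$. I would handle this by noting that $J(U)$ is closed under quotients in the relevant sense only up to the equivalence $J(U)\simeq\modd kQ_U$; thus it is safest to reduce immediately to the case $U=0$ via Theorem \ref{J(Lambda_otimes_kQ)=mod(R_otimes_kQ')} and Theorem \ref{bijection_of_tau-perp_subcategories}(i) and run the $\Gen$-argument inside genuine module categories $\modd kQ_U$ and $\modd(R\otimes kQ_U)$, where Lemma \ref{Gen_Lemma} (applied with $Q_U$ in place of $Q$) directly applies. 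No serious difficulty is expected beyond this reduction.
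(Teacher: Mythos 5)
Your proposal is correct and begins with the same reduction as the paper (via Theorem \ref{J(Lambda_otimes_kQ)=mod(R_otimes_kQ')}, one passes to the case $J(U)=\modd kQ$ and $J(\Lambda\otimes_{kQ}U)=\modd\Lambda$, so that $\Gen$ is computed in honest module categories), but the core of your argument is genuinely different. The paper proves the base case by transporting the bijection $\T\mapsto\P_s(\T)$ of \cite[Thm.\ 1.6]{BHM_mutation}: it sets up the commutative square relating support $\tau$-tilting modules and gen-minimal $\tau$-rigid modules over $kQ$ and over $\Lambda$, where the left vertical arrow is the bijection on support $\tau$-tilting modules from \cite[Cor.\ 1.14]{nonis2025tau}, and commutativity is checked via the splitting $M=M_s\oplus M_{ns}$ and the identity $\Lambda\otimes_{kQ}M_s=(\Lambda\otimes_{kQ}M)_s$ from \cite[Lemma 6.22]{nonis2025tau}. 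You instead verify gen-minimality directly: proper summands of $\Lambda\otimes_{kQ}U'$ correspond to proper summands of $U'$, and Lemma \ref{Gen_Lemma} shows $\Gen U''=\Gen U'$ if and only if $\Gen(\Lambda\otimes_{kQ}U'')=\Gen(\Lambda\otimes_{kQ}U')$. Your route is more elementary (it avoids \cite[Thm.\ 1.6]{BHM_mutation} and the split-projective compatibility lemma), while the paper's route gets the bijection on \emph{basic} $\tau$-rigid modules for free from the support $\tau$-tilting bijection. That last point is the one place where you are slightly too quick: Proposition \ref{tau-rigid-Lambda1:1tau-rigid-kQ} only concerns indecomposables, and additivity alone does not guarantee that a direct sum of compatible indecomposables induces to a $\tau$-rigid module (one must also control the cross terms $\Hom(M_i,\tau M_j)$). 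This is easily repaired using $\Hom_\Lambda(\Lambda\otimes_{kQ}X,\Lambda\otimes_{kQ}Y)\cong R\otimes\Hom_{kQ}(X,Y)$ together with the compatibility of induction with $\tau$ (\cite[Prop.\ 1.8(iii)]{nonis2025tau}), or by invoking \cite[Cor.\ 1.14]{nonis2025tau} directly, so it is a gap of exposition rather than of substance.
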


\begin{proof}
    By Theorem \ref{J(Lambda_otimes_kQ)=mod(R_otimes_kQ')} and \cite[Cor. 4.11]{nonis2025tau}, it suffices to prove the claim for $J(U) = \modd kQ$ and $J(\Lambda\otimes_{kQ}U) = \modd \Lambda$. Consider the following diagram 
    \[\begin{tikzcd}[ampersand replacement=\&,cramped]
	{\stautilt kQ} \&\& {\taugenmin kQ} \\
	{\stautilt \Lambda} \&\& {\taugenmin \Lambda}
	\arrow["{{\P_s}(\Gen-)}", from=1-1, to=1-3]
	\arrow["{\Lambda\otimes_{kQ}-}"', from=1-1, to=2-1]
	\arrow["{\Lambda\otimes_{kQ}-}", from=1-3, to=2-3]
	\arrow["{{\P_s}(\Gen-)}", from=2-1, to=2-3]
\end{tikzcd}\]
where $\taugenmin kQ$ and $\taugenmin\Lambda$ denote the set of gen-minimal $\tau$-rigid $kQ$-modules and the set of gen-minimal $\tau$-rigid $\Lambda$-modules, respectively. Note that the left vertical arrow and the horizontal arrows are bijections by \cite[Cor. 1.14]{nonis2025tau} and \cite[Thm. 1.6]{BHM_mutation}, respectively. We claim that the diagram above commutes. Let $M\in\stautilt kQ$ and write $M = M_s\oplus M_{ns}$ as in \cite[Lemma 2.6]{tau-perpendicular_wide_subategories}, where $M_s$ is split projective in $\Gen M$ and no direct summand of $M_{ns}$ is split projective in $\Gen M$. Then,
\begin{align*}
   \Lambda\otimes_{kQ} {\P_s(\Gen M)} &= \Lambda\otimes_{kQ}M_s \\
   &= (\Lambda\otimes_{kQ}M)_s && \text{by \cite[Lemma 6.22]{nonis2025tau}}\\
   &= {\P_s}(\Gen(\Lambda\otimes_{kQ}M)).
\end{align*}
Since the diagram is commutative and the left vertical arrow, as well as the horizontal arrows, are bijections, it follows that the right vertical arrow is also a bijection. This completes the proof.
\end{proof}

 We need the following analog of \cite[Prop. 6.7]{BHM_mutation}.

\begin{proposition}\label{analog_of_6.7_BHM}
    Let $(B, C)$ be a ($\tau$-)exceptional pair in $\modd kQ$ and suppose that $\Ext^1_\Lambda(B,C) \neq 0$, and $C\notin\proj kQ$. Let $\eta = (C\hookrightarrow E\twoheadrightarrow B^r)$ and $\eta' = (C^l\hookrightarrow E'\twoheadrightarrow B)$ be short exact sequences such that $\eta: B^r\to C[1]$ is a minimal right $\add B$-approximation and $\eta': B\to C^l[1]$ is a minimal left $\add C[1]$-approximation. The following statements hold: 
    \begin{enumerate}
        \item[(i)] $\mathcal{E}^{-1}_{\Lambda\otimes_{kQ}C}(\Lambda\otimes_{kQ}B) = \Lambda\otimes_{kQ}E'$; 
        \item[(ii)] $\Hom_\Lambda(\Lambda\otimes_{kQ}E', \Lambda\otimes_{kQ}C) = 0$ and $\Lambda\otimes_{kQ}L:= \Lambda\otimes_{kQ}(C\oplus E')$ is a gen-minimal $\tau$-rigid module; 
        \item[(iii)] $\Lambda\otimes_{kQ}\widetilde{L}:= \Lambda\otimes_{kQ}(B\oplus E)$ satisfies $\Lambda\otimes_{kQ}\widetilde{L}\in \Gen(\Lambda\otimes_{kQ}L)$ and $\tau(\Lambda\otimes_{kQ}\widetilde{L})\in J(\Lambda\otimes_{kQ}L)^\perp$; 
        \item[(iv)] $\Hom_\Lambda(\Lambda\otimes_{kQ}B,\Lambda\otimes_{kQ}E)=0$ and $\tau(\Lambda\otimes_{kQ}\widetilde{L})$ is a cogen-minimal $\tau$-rigid module; 
        \item[(v)] $\mathcal{E}_{\Lambda\otimes_{kQ}E'}(\Lambda\otimes_{kQ}C)\in\proj J(\Lambda\otimes_{kQ}E')$; 
        \item[(vi)] $\Lambda\otimes_{kQ}\widetilde{L} = \P_{ns}(\T(J(\Lambda\otimes_{kQ}L)))$.  
    \end{enumerate}
\end{proposition}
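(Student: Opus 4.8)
The strategy is to reduce everything to the hereditary statement \cite[Prop. 6.7]{BHM_mutation} applied to the pair $(B,C)$ in $\modd kQ$, and then transport each conclusion across the induction functor using the compatibility results already assembled. The key point is that all the functors in play ($\Eps$, $f$, $\tau$, $\Gen$, $\T$, $\P_s$, $\P_{ns}$) are compatible with $\Lambda\otimes_{kQ}-$; once this is known for each, items (i)--(vi) follow by applying $\Lambda\otimes_{kQ}-$ to the corresponding hereditary identities. First I would note that since $\Lambda$ is not hereditary we cannot invoke \cite[Prop. 6.7]{BHM_mutation} directly for $(\Lambda\otimes_{kQ}B,\Lambda\otimes_{kQ}C)$; instead we use it as a black box for $kQ$. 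Observe also that the hypothesis $\Ext^1_\Lambda(\Lambda\otimes_{kQ}B,\Lambda\otimes_{kQ}C)\neq 0$ is equivalent to $\Ext^1_{kQ}(B,C)\neq 0$: by \cite[Prop. 1.8]{nonis2025tau} (or the analogous base-change statement in the excerpt) $\Ext^1_\Lambda(\Lambda\otimes_{kQ}B,\Lambda\otimes_{kQ}C)\cong \Lambda\otimes_{kQ}\Ext^1_{kQ}(B,C)$ as $R$-modules, so nonvanishing is preserved; likewise $\Lambda\otimes_{kQ}C\in\proj\Lambda$ iff $C\in\proj kQ$. Hence the hypotheses of \cite[Prop. 6.7]{BHM_mutation} hold for $(B,C)$.

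\textbf{Steps.} (i) The sequence $\eta'$ being a minimal left $\add C[1]$-approximation triangle in $kQ$, \cite[Prop. 6.7(i)]{BHM_mutation} gives $\Eps^{-1}_C(B) = E'$ (with the convention that, as in the hereditary setting, $\Eps^{-1}_C(B)=f^{-1}_C(B)$). Applying Proposition \ref{E-version3.7}, $\Eps^{-1}_{\Lambda\otimes_{kQ}C}(\Lambda\otimes_{kQ}B)\cong \Lambda\otimes_{kQ}\Eps^{-1}_C(B) = \Lambda\otimes_{kQ}E'$. (ii) By \cite[Prop. 6.7(ii)]{BHM_mutation}, $\Hom_{kQ}(E',C)=0$ and $L = C\oplus E'$ is gen-minimal $\tau$-rigid in $kQ$; since the induction functor takes $\tau$-rigid modules to $\tau$-rigid modules (Prop. \ref{tau-rigid-Lambda1:1tau-rigid-kQ}), takes gen-minimal $\tau$-rigid modules to gen-minimal ones (Lemma \ref{gen-min-tau-rigid}), and $\Hom_\Lambda(\Lambda\otimes_{kQ}E',\Lambda\otimes_{kQ}C)\cong \Lambda\otimes_{kQ}\Hom_{kQ}(E',C)=0$ by base change, item (ii) follows. (iii) From \cite[Prop. 6.7(iii)]{BHM_mutation} we have $\widetilde L = B\oplus E\in\Gen L$ and $\tau\widetilde L\in J(L)^\perp$ in $kQ$. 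Lemma \ref{Gen_Lemma} upgrades the first to $\Lambda\otimes_{kQ}\widetilde L\in\Gen(\Lambda\otimes_{kQ}L)$. For the second, use that $\tau$ commutes with induction on $\tau$-rigid modules (this is part of the package around Prop. \ref{tau-rigid-Lambda1:1tau-rigid-kQ}; in any case $\widetilde L$ is $\tau$-rigid since $\tau\widetilde L$ is cogen-minimal $\tau$-rigid, hence $\tau^{-1}\tau\widetilde L = \widetilde L$ has no self-extensions), together with the bijection of $\tau$-perpendicular subcategories (Theorem \ref{bijection_of_tau-perp_subcategories}(i)) which sends $J(L)$ to $J(\Lambda\otimes_{kQ}L)$, and the base-change identity $J(L)^\perp$ transports to $J(\Lambda\otimes_{kQ}L)^\perp$ since $\Hom$ vanishing is detected after restriction of scalars (if $\Hom_\Lambda(\Lambda\otimes_{kQ}L', \Lambda\otimes_{kQ}\tau\widetilde L)\neq 0$ then restricting gives $\Hom_{kQ}((L')^{\oplus d}, (\tau\widetilde L)^{\oplus d})\neq 0$). (iv) Dual to (ii): $\Hom_{kQ}(B,E)=0$ gives $\Hom_\Lambda(\Lambda\otimes_{kQ}B,\Lambda\otimes_{kQ}E)=0$ by base change, and $\tau\widetilde L$ cogen-minimal $\tau$-rigid in $kQ$ transports to $\tau(\Lambda\otimes_{kQ}\widetilde L) = \Lambda\otimes_{kQ}\tau\widetilde L$ being cogen-minimal $\tau$-rigid in $\Lambda$ by the dual of Lemma \ref{gen-min-tau-rigid} (or apply Lemma \ref{gen-min-tau-rigid} to $kQ^{op}$, noting $(R\otimes kQ)^{op}\cong R\otimes kQ^{op}$ since $R$ is commutative). (v) By \cite[Prop. 6.7(v)]{BHM_mutation}, $\Eps_{E'}(C)\in\proj J(E')$ in $kQ$; Proposition \ref{E-version3.7} gives $\Eps_{\Lambda\otimes_{kQ}E'}(\Lambda\otimes_{kQ}C)\cong\Lambda\otimes_{kQ}\Eps_{E'}(C)$, and Proposition \ref{indprojJ(M)1-1indprojJ(Lambda_otimes_M)} shows this lies in $\proj J(\Lambda\otimes_{kQ}E')$ since the induction functor matches relative projectives. (vi) By \cite[Prop. 6.7(vi)]{BHM_mutation}, $\widetilde L = \P_{ns}(\T(J(L)))$ in $kQ$. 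Apply $\Lambda\otimes_{kQ}-$: we need that $\Lambda\otimes_{kQ}-$ commutes with $\T(-)$ and with $\P_{ns}(-)$. For $\T$: the smallest torsion class containing $J(L)$ corresponds under the torsion-class/$\tau$-tilting dictionary to the gen-minimal $\tau$-rigid module $\P_s(\T(J(L)))$, which by Lemma \ref{gen-min-tau-rigid} commutes with induction; and $\T(\Lambda\otimes_{kQ}J(L)) = \T(J(\Lambda\otimes_{kQ}L))$ by Theorem \ref{bijection_of_tau-perp_subcategories}(i). For $\P_{ns}$: this is the Bongartz-type splitting $\P_s \oplus \P_{ns}$ of \cite[Lemma 2.6]{tau-perpendicular_wide_subategories}, shown to commute with induction in \cite[Lemma 6.22]{nonis2025tau} (as already used in the proof of Lemma \ref{gen-min-tau-rigid}). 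Combining, $\Lambda\otimes_{kQ}\widetilde L = \Lambda\otimes_{kQ}\P_{ns}(\T(J(L))) = \P_{ns}(\T(J(\Lambda\otimes_{kQ}L)))$.

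\textbf{Main obstacle.} The routine parts are the $\Hom$- and $\Ext$-base-change identities, which reduce to restriction of scalars, and the $\Eps$/$f$ compatibility, which is exactly Proposition \ref{E-version3.7}. The delicate step is (vi): one must be careful that $\T(J(L))$ really is a \emph{functorially finite} torsion class so that the $\P_s/\P_{ns}$-dictionary and \cite[Lemma 6.22]{nonis2025tau} apply --- this holds because $J(L)$ is functorially finite (being $\tau$-perpendicular, hence wide and functorially finite by the results recalled in the introduction) and $\T(\W)$ is functorially finite for $\W$ a functorially finite wide subcategory, so $\P_s(\T(J(L)))$ is genuinely a gen-minimal $\tau$-rigid module to which Lemma \ref{gen-min-tau-rigid} applies. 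A second point requiring care is that in (iv) one invokes the ``dual'' of Lemma \ref{gen-min-tau-rigid}; this is legitimate because $R$ is commutative, so $\Lambda^{op} = (R\otimes kQ)^{op} \cong R\otimes (kQ)^{op}$ is again of the form ``local commutative $\otimes$ path algebra of an acyclic quiver,'' and the induction functor intertwines with duality appropriately --- alternatively one avoids the dual statement by noting $\tau\widetilde L$ is $\tau$-rigid and applying Lemma \ref{gen-min-tau-rigid} together with the observation that cogen-minimality of a $\tau$-rigid module $X$ is equivalent to gen-minimality of $\tau^{-1}X$, which is preserved by induction. Once these bookkeeping points are settled, the proposition is a direct transport of \cite[Prop. 6.7]{BHM_mutation}.
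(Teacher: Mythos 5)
Your overall strategy --- treat \cite[Prop.\ 6.7]{BHM_mutation} as a black box for the hereditary pair $(B,C)$ and transport each conclusion along the induction functor --- is exactly the paper's strategy, and your treatments of (i), (ii) and (v) match the paper's (for (ii) the paper checks gen-minimality directly via Lemma \ref{Gen_Lemma} rather than quoting Lemma \ref{gen-min-tau-rigid}, but your route is equally valid). However, two of your steps have genuine gaps, and both stem from the same oversight: the subcategories $J(\Lambda\otimes_{kQ}L)$ and $\T(J(\Lambda\otimes_{kQ}L))$ contain many modules that are \emph{not} induced from $\modd kQ$, so ``apply $\Lambda\otimes_{kQ}-$ to the hereditary identity'' is not a one-line operation for statements quantified over all objects of these subcategories.

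Concretely, in (iii) you must show $\Hom_\Lambda(X,\tau(\Lambda\otimes_{kQ}\widetilde L))=0$ for \emph{every} $X\in J(\Lambda\otimes_{kQ}L)$, but your restriction-of-scalars argument only rules out nonzero maps from induced objects $\Lambda\otimes_{kQ}L'$ with $L'\in J(L)$; since $J(\Lambda\otimes_{kQ}L)\simeq\modd(R\otimes kQ_L)$ is strictly larger than the image of $J(L)$ under induction, this does not prove the claim. The paper instead argues directly inside $\modd\Lambda$: for arbitrary $X\in J(\Lambda\otimes_{kQ}L)$ it applies $\Hom_\Lambda(-,X)$ to the induced sequences $\eta$ and $\eta'$, uses $\pd(\Lambda\otimes_{kQ}L)\le 1$ and $\pd(\Lambda\otimes_{kQ}\widetilde L)\le 1$ to kill the outer terms, and concludes via AR duality. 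Similarly, in (vi) the assertions that induction ``commutes with $\T(-)$ and with $\P_{ns}(-)$'' are not established by the results you cite: Theorem \ref{bijection_of_tau-perp_subcategories}(i) matches up the subcategories $J(L)$ and $J(\Lambda\otimes_{kQ}L)$ but says nothing about the torsion classes they generate, and \cite[Lemma 6.22]{nonis2025tau} concerns the split/non-split decomposition of a $\tau$-rigid module, not $\P_{ns}$ of an arbitrary functorially finite torsion class; making this precise would require showing $\T(J(\Lambda\otimes_{kQ}L))=\Gen(\Lambda\otimes_{kQ}\P(\T(J(L))))$, which you do not do. The paper avoids this entirely by running the argument of \cite[Prop.\ 6.7(f)]{BHM_mutation} inside $\modd\Lambda$: it identifies $\tau(\Lambda\otimes_{kQ}\widetilde L)=\mathcal{I}_s(J(\Lambda\otimes_{kQ}L)^\perp)$ using (iv) and \cite[Prop.\ 2.13]{BHM_mutation}, verifies that $J(\Lambda\otimes_{kQ}L)=J(\Lambda\otimes_{kQ}B,\Lambda\otimes_{kQ}C)$ is sincere and left finite, and then applies $\tau^{-1}$ together with \cite[Prop.\ 3.10(a')]{BHM_mutation}. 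A smaller issue of the same flavour occurs in (iv): the ``dual'' of Lemma \ref{gen-min-tau-rigid} is not proved anywhere in the paper, and your proposed equivalence between cogen-minimality of $X$ and gen-minimality of $\tau^{-1}X$ is left unjustified; the paper sidesteps this with a direct restriction-of-scalars argument showing $\tau(\Lambda\otimes_{kQ}E)\notin\Cogen(\tau(\Lambda\otimes_{kQ}B))$. You should repair (iii) and (vi) along these lines before the transport argument is complete.
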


\begin{proof}
     Observe that \cite[Prop. 6.7]{BHM_mutation} applies to the exceptional pair $(B,C)$. Moreover, the assumptions $\Ext^1_{kQ}(B,C) \neq 0$ and $C\notin \proj kQ$ imply $\Ext^1_\Lambda(\Lambda\otimes_{kQ}B,\Lambda\otimes_{kQ}C) \neq 0$ and $\Lambda\otimes_{kQ}C \notin \proj \Lambda$, respectively. 

    (i) Since $E'$ is an indecomposable $\tau$-rigid $kQ$-module and $\Eps^{-1}_C(B) = E'$ by \cite[Prop. 6.7(a)]{BHM_mutation}, it follows that  $\mathcal{E}^{-1}_{\Lambda\otimes_{kQ}C}(\Lambda\otimes_{kQ}B) = \Lambda\otimes_{kQ}\Eps^{-1}_C(B) = \Lambda\otimes_{kQ}E'$ by Proposition \ref{E-version3.7}.

    (ii) We have that $$\Hom_\Lambda(\Lambda\otimes_{kQ}E', \Lambda\otimes_{kQ}C) \cong \Hom_{R}(R,R)\otimes\Hom_{kQ}(E',C) = 0$$ since $\Hom_{kQ}(E',C) = 0$ by \cite[Prop. 6.7(b)]{BHM_mutation}.

    Now let $\Lambda\otimes_{kQ}L:= \Lambda\otimes_{kQ}(C\oplus E')$. We have that $\Lambda\otimes_{kQ}E'$ is indecomposable, $\Hom_\Lambda(\Lambda\otimes_{kQ}E',\Lambda\otimes_{kQ}C) = 0$, and $\Lambda\otimes_{kQ}L$ is a basic $\tau$-rigid $\Lambda$-module. To prove gen-minimality of $\Lambda\otimes_{kQ}L$ we show that $\Lambda\otimes_{kQ}E'\notin \Gen(\Lambda\otimes_{kQ}C)$. But this follows from the fact that $E'\notin \Gen C$ by \cite[Prop. 6.7(b)]{BHM_mutation} and Lemma \ref{Gen_Lemma}.

    (iii) Let $\Lambda\otimes_{kQ}\widetilde{L} = \Lambda\otimes_{kQ}(B\oplus E)$. By \cite[Prop. 6.7(c)]{BHM_mutation} $\widetilde{L} = B\oplus E$ is in $\Gen L$ and therefore $\Lambda\otimes_{kQ}\widetilde{L}\in \Gen(\Lambda\otimes_{kQ}L)$ by Lemma \ref{Gen_Lemma}.
    
    Now we show that $\tau(\Lambda\otimes_{kQ}\widetilde{L})\in J(\Lambda\otimes_{kQ}L)^\perp$. So let $X\in J(\Lambda\otimes_{kQ}L)$. Then, $\Hom_{kQ}(\Lambda\otimes_{kQ}C,X) = 0$ and $\Hom_{kQ}(\Lambda\otimes_{kQ}E',X)=0$. Moreover, the fact that $X$ is in $J(\Lambda\otimes_{kQ}L)$ together with $\pd(\Lambda\otimes_{kQ}L)\leq 1$ give $\Ext^1_{\Lambda}(\Lambda\otimes_{kQ}C,X) = 0$ and $\Ext_\Lambda^1(\Lambda\otimes_{kQ}E',X) = 0$. Applying $\Hom_\Lambda(-,X)$ to $\eta$ and $\eta'$ we obtain two exact sequences of the form 
    $$\Ext^1_\Lambda((\Lambda\otimes_{kQ}B)^r,X)\to \Ext_\Lambda^1(\Lambda\otimes_{kQ}E,X)\to \Ext^1_\Lambda(\Lambda\otimes_{kQ}C,X)$$
    and 
    $$\Hom_{\Lambda}((\Lambda\otimes_{kQ}C)^l, X)\to \Ext^1_\Lambda(\Lambda\otimes_{kQ}B,X)\to \Ext_\Lambda^1(\Lambda\otimes_{kQ}E',X).$$
    Since the outer terms of both sequences vanish, and using that $\pd(\Lambda\otimes_{kQ}\widetilde{L})\leq 1$, it follows that
    \begin{align*}
        0 = \Ext^1_\Lambda(\Lambda\otimes_{kQ}(B\oplus E), X) &= \Ext^1_\Lambda(\Lambda\otimes_{kQ}\widetilde{L},X)\\&\cong\Hom_\Lambda(X, \tau(\Lambda\otimes_{kQ}\widetilde{L})).
    \end{align*}
    This concludes the proof of (iii). 

    (iv) Since $\Hom_{kQ}(B,E)=0$ by \cite[Prop. 6.7(d)]{BHM_mutation}, it follows that $\Hom_\Lambda(\Lambda\otimes_{kQ}B, \Lambda\otimes_{kQ}E)=0$. Moreover, since $\tau\widetilde{L}$ is $\tau$-rigid, then so is $\Lambda\otimes_{kQ}\tau\widetilde{L} \cong \tau(\Lambda\otimes_{kQ}\widetilde{L})$ by Proposition \ref{tau-rigid-Lambda1:1tau-rigid-kQ} and \cite[Prop. 1.8(iii)]{nonis2025tau}. It remains to show that $\tau(\Lambda\otimes_{kQ}\widetilde{L})$ is cogen-minimal. To see this, it suffices to show that $\tau(\Lambda\otimes_{kQ}E)\notin \Cogen(\tau(\Lambda\otimes_{kQ}B))$. Suppose that was not the case. Then we would have a monomorphism $\tau(\Lambda\otimes_{kQ}B)^s\to \tau(\Lambda\otimes_{kQ}E)$, for some $s>0$. Notice that this can be written as $\Lambda\otimes_{kQ}\tau B^s\to \Lambda\otimes_{kQ}\tau E$ by \cite[Prop. 1.8(iii)]{nonis2025tau}. Applying the restriction of scalars, we obtain a monomorphism $\tau B^{ds}\to \tau E^d$, where $d= \dim R$, that is $\tau B\in\Cogen \tau E$. But this contradicts the cogen-minimality of $\tau \widetilde{L}$ in \cite[Prop. 6.7(d)]{BHM_mutation}. 

    (v) By \cite[Prop. 6.7(e)]{BHM_mutation} $\mathcal{E}_{E'}(C)=C\in \proj J(E')$. Then the claim follows combining Proposition \ref{E-version3.7} and Proposition \ref{indprojJ(M)1-1indprojJ(Lambda_otimes_M)}.
    
    (vi) We follow the argument in \cite[Proof of Prop. 6.7(f)]{BHM_mutation}.  By definition of $\Lambda\otimes_{kQ}E$ and $\Lambda\otimes_{kQ}E'$, it follows that $J(\Lambda\otimes_{kQ}L)=J(\Lambda\otimes_{kQ}\widetilde{L}) = J^d(\tau(\Lambda\otimes_{kQ}\widetilde{L}))$. By (iv), \cite[Prop. 1.8]{nonis2025tau}, and \cite[Prop. 2.13]{BHM_mutation} (taking $\F = \Cogen(\Lambda\otimes_{kQ}\tau\widetilde{L})$), we have 
    \begin{equation}\label{eq}
        \tau(\Lambda\otimes_{kQ}\widetilde{L})  = \mathcal{I}_s(J^d(\Lambda\otimes_{kQ}\tau\widetilde{L})^\perp) = \mathcal{I}_s(J(\Lambda\otimes_{kQ}L)^\perp).
    \end{equation}
    The fact that $B= f_C E'\notin \proj J(C)$ (see Proof of \cite[Prop. 6.7(f)]{BHM_mutation}) gives $\Lambda\otimes_{kQ}B = \Lambda\otimes_{kQ}f_C E'\notin \proj J(\Lambda\otimes_{kQ}C)$, as every module in $\proj J(\Lambda\otimes_{kQ}C)$ is isomorphic to $\Lambda\otimes_{kQ}X$ for some module $X\in \proj J(C)$ by Proposition \ref{indprojJ(M)1-1indprojJ(Lambda_otimes_M)}. Since $\Lambda\otimes_{kQ}C\notin\proj\Lambda$, \cite[Prop. 3.9]{BHM_mutation} implies that 
    \begin{align*}
     J(\Lambda\otimes_{kQ}L) &= J((\Lambda\otimes_{kQ}C)\oplus \Eps_{\Lambda\otimes_{kQ}C}^{-1}(\Lambda\otimes_{kQ}B)) &&\text{ by (i), (ii)}\\
     &= J_{J(\Lambda\otimes_{kQ}C)}\Big(\Eps_{\Lambda\otimes_{kQ}C}(\Eps_{\Lambda\otimes_{kQ}C}^{-1}(\Lambda\otimes_{kQ}B))\Big) &&\text{ by \cite[Thm. 6.4]{tau-perpendicular_wide_subategories}}\\
     &=J_{J(\Lambda\otimes_{kQ}C)}(\Lambda\otimes_{kQ}B)\\
     &=J(\Lambda\otimes_{kQ}B,\Lambda\otimes_{kQ}C)
    \end{align*}
    is sincere. We also notice that $J(\Lambda\otimes_{kQ}L)$ is left finite by Theorem \ref{bijection_of_tau-perp_subcategories}(ii). Applying $\tau^{-1}$ to \eqref{eq} and using \cite[Prop. 3.10(a')]{BHM_mutation}, we obtain $\P_{ns}(\T(J(\Lambda\otimes_{kQ}L))) = \tau^{-1}\mathcal{I}_s(J(\Lambda\otimes_{kQ}L)^{\perp}) = \Lambda\otimes_{kQ}\widetilde{L}$.
    \end{proof}

    We are now ready to prove Proposition \ref{mutation_pairs_in_Lambda}.

    \begin{proof}[Proof of Proposition \ref{mutation_pairs_in_Lambda}]
        Let $(\Lambda\otimes_{kQ}B, \Lambda\otimes_{kQ}C)$ be a left mutable $\tau$-exceptional pair and write $\varphi(\Lambda\otimes_{kQ}B,\Lambda\otimes_{kQ}C)=((\Lambda\otimes_{kQ}C)', (\Lambda\otimes_{kQ}B)')$. Assume $(\Lambda\otimes_{kQ}B, \Lambda\otimes_{kQ}C)$ is left regular. Then $(\Lambda\otimes_{kQ}B)' = \Lambda\otimes_{kQ}B$ by Proposition \ref{mutation_left_regular}. 

        Now suppose that $(\Lambda\otimes_{kQ}B, \Lambda\otimes_{kQ}C)$ is left irregular. Notice, that this corresponds bijectively to the left irregular pair $(B, C)$ in $\modd{kQ}$ by Proposition \ref{bijection_of_left_(ir)regular_pairs} \eqref{bijection_of_left_irr_pairs}.  As shown in the Proof of \cite[Prop. 6.2]{BHM_mutation}, we have that $\Ext^1_{kQ}(B,C) \neq 0$, and therefore $\Ext^1_\Lambda(\Lambda\otimes_{kQ}B, \Lambda\otimes_{kQ}C)\neq 0$. Denote 
        $$ \Lambda\otimes_{kQ}L := \Lambda\otimes_{kQ}(C\oplus \Eps^{-1}_C(B)) \quad \text{and}\quad
        \Lambda\otimes_{kQ}\widetilde{L}:= \P_{ns}(\T(J(\Lambda\otimes_{kQ}L))).$$
        Notice that these are precisely the modules $\Lambda\otimes_{kQ}L$ and $\Lambda\otimes_{kQ}\widetilde{L}$ from Proposition \ref{analog_of_6.7_BHM}. In particular,  Proposition \ref{analog_of_6.7_BHM} (iii) gives a decomposition $\Lambda\otimes_{kQ}\widetilde{L} = \Lambda\otimes_{kQ}(B\oplus E)$, with $\Lambda\otimes_{kQ}B\in \Gen(\Lambda\otimes_{kQ}E)$ since $B\in \Gen E$. Now, \cite[Prop. 3.13(g)]{BHM_mutation} together with the fact that $\Lambda\otimes_{kQ}B\in \Gen(\Lambda\otimes_{kQ}E)$ imply that $\Lambda\otimes_{kQ}E = {\P_s}{(\Gen \P_{ns}(\T(J(\Lambda\otimes_{kQ}L))))}$. Using Def.-Prop. \ref{varphi_left_irregular}, we obtain that $(\Lambda\otimes_{kQ}B)' = (\Lambda\otimes_{kQ}\widetilde{L})/(\Lambda\otimes_{kQ}E) = \Lambda\otimes_{kQ}B$. This finishes the proof. 
    \end{proof}

\begin{corollary}\label{transitive_action_on_tau-exc_seq_over_Lambda}
    The braid group acts transitively on the set of complete $\tau$-exceptional sequences in $\modd\Lambda$. 
\end{corollary}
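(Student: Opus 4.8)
The plan is to deduce Corollary \ref{transitive_action_on_tau-exc_seq_over_Lambda} directly from Theorem \ref{mutation_in_Lambda} together with the transitivity of the classical braid group action over the hereditary algebra $kQ$. First I would recall that by Theorem \ref{bijection_of_tau_exc_seq} (applied with $t=n$), the induction functor $\Lambda\otimes_{kQ}-$ gives a bijection between the set of complete $(\tau$-$)$exceptional sequences in $\modd kQ$ and the set of complete $\tau$-exceptional sequences in $\modd\Lambda$, so every complete $\tau$-exceptional sequence in $\modd\Lambda$ has the form $\Lambda\otimes_{kQ}\M$ for a unique complete exceptional sequence $\M$ in $\modd kQ$. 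Since $\Lambda$ is mutation complete by Corollary \ref{Lambda_is_mutation_complete}, all the left mutations $\varphi_i$ are defined on every complete $\tau$-exceptional sequence, and these operations (together with their inverses, the right mutations) generate the braid group action on the set of complete $\tau$-exceptional sequences in $\modd\Lambda$.

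The key step is then the intertwining identity: by Theorem \ref{mutation_in_Lambda}, $\varphi_i(\Lambda\otimes_{kQ}\M)=\Lambda\otimes_{kQ}\sigma_i(\M)$ for all $1\leq i\leq n-1$, where $\sigma_i$ is the classical braid mutation of exceptional sequences over $kQ$. Thus the bijection $\Lambda\otimes_{kQ}-$ is equivariant: it conjugates the generators $\sigma_i$ of the braid group action on complete exceptional sequences in $\modd kQ$ to the generators $\varphi_i$ of the braid group action on complete $\tau$-exceptional sequences in $\modd\Lambda$. Given two complete $\tau$-exceptional sequences $\Lambda\otimes_{kQ}\M$ and $\Lambda\otimes_{kQ}\M'$ in $\modd\Lambda$, by the Crawley-Boevey–Ringel transitivity result \cite{Boevey-ExcSeq, braid_group_action_on_exc_seq_Ringel} there is a braid group element $\beta$ with $\beta\cdot\M=\M'$. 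Writing $\beta$ as a word in the $\sigma_i^{\pm1}$ and applying the intertwining identity repeatedly, the corresponding word in the $\varphi_i^{\pm1}$ sends $\Lambda\otimes_{kQ}\M$ to $\Lambda\otimes_{kQ}\M'$. Hence the action is transitive.

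Since complete exceptional sequences always exist over the hereditary algebra $kQ$, the set of complete $\tau$-exceptional sequences in $\modd\Lambda$ is nonempty, so transitivity is not vacuous. There is essentially no obstacle here: all the work has already been done in Theorem \ref{mutation_in_Lambda}, and the only point requiring a word of care is that repeatedly composing the relation $\varphi_i(\Lambda\otimes_{kQ}\M)=\Lambda\otimes_{kQ}\sigma_i(\M)$ — and its inverse form $\varphi_i^{-1}(\Lambda\otimes_{kQ}\M)=\Lambda\otimes_{kQ}\sigma_i^{-1}(\M)$, which holds because $\varphi_i$ is a bijection and $\Lambda\otimes_{kQ}-$ is injective on complete sequences — is legitimate because each intermediate term $\sigma_{i_1}^{\pm1}\cdots\sigma_{i_j}^{\pm1}(\M)$ is again a complete exceptional sequence in $\modd kQ$, so the identity applies at each stage. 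This completes the proof.
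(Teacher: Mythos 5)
Your proposal is correct and follows essentially the same route as the paper: both arguments combine the bijection of Theorem \ref{bijection_of_tau_exc_seq}, the intertwining identity of Theorem \ref{mutation_in_Lambda}, and the transitivity of the classical braid group action over $kQ$. Your additional remarks on handling inverses and intermediate terms only make explicit what the paper leaves implicit.
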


\begin{proof}
    Let $\M$ be a complete ($\tau$-)exceptional sequence in $\modd{kQ}$. By Theorem \ref{mutation_in_Lambda} we have $\Lambda\otimes_{kQ}\sigma_i\M = \varphi_i(\Lambda\otimes_{kQ}\M)$. Since $\Lambda\otimes_{kQ}-$ induces a bijection between complete ($\tau$-)exceptional sequences in $\modd{kQ}$ and complete $\tau$-exceptional sequences in $\modd{\Lambda}$ by Theorem \ref{bijection_of_tau_exc_seq}, and the braid group acts transitively on the set of complete ($\tau$-)exceptional sequences in $\modd{kQ}$, we conclude that the action of the braid group on the set of $\tau$-exceptional sequences in $\modd\Lambda$ is transitive. 
\end{proof}

\section{$RQ$-lattices}\label{secRQ-lattices}

Let $\Lambda  = R\otimes kQ$. We know that $\Lambda$ is isomorphic to $RQ$ and we will henceforth identify $\Lambda$ with $RQ$. This section aims to compare $\tau$-exceptional sequences in $\modd\Lambda$ with the notion of exceptional sequences of $RQ$-lattices introduced by Crawley-Boevey in \cite{CB_RigidIntegralRep}.

\begin{definition}[{\cite[Page 375]{CB_RigidIntegralRep}}]\label{RQ-lattices}
    A $\Lambda$-module $X$ is called an \emph{$RQ$-lattice} if it is finitely generated and projective as an $R$-module; the $R$-module associated to the vertex i is then $X_i:= \overline{e_i}X$, where $\overline{e_i}$ is the idempotent given by the trivial path at $i$ in $RQ$. An $RQ$-lattice $X$ is called \emph{$R$-exceptional} if it is rigid and $\End_\Lambda(X)\cong R$. 
\end{definition}

\begin{definition}[{\cite[Page 383]{CB_RigidIntegralRep}}]\label{R-exc_seq}
    A sequence of indecomposable $RQ$-lattices $(X_1\cdots, X_r)$ is called \emph{$R$-exceptional} if the following conditions hold. 
    \begin{enumerate}
        \item[(a)] $X_i$ is $R$-exceptional for all $1\leq i \leq r$; 
        \item[(b)] $\Hom_\Lambda(X_i, X_j) = 0 = \Ext_\Lambda^{\geq 1} (X_i, X_j)$ for all $1\leq j < i \leq r$.  
    \end{enumerate}
    If $r = \abs{\Lambda}$, the sequence is said to be \emph{complete}. 
\end{definition}

\begin{remark}
    Let $X\in \modd \Lambda$ be an $RQ$-lattice. Then, $\pd X \leq 1$ by \cite[Lemma 3.1(a)]{CB_RigidIntegralRep}. Hence, condition (b) in Definition \ref{R-exc_seq} is equivalent to $\Hom_\Lambda(X_i, X_j) = 0 = \Ext_\Lambda^{1} (X_i, X_j)$ for all $1\leq j < i \leq r$.
\end{remark}

The following observation holds for any finite dimensional algebra $\Lambda$, and generalizes the fact that exceptional and $\tau$-exceptional sequences coincide over hereditary algebras. Although this result was stated in \cite{tauExcSeq_BM} without a proof, we include one here for the sake of completeness.

\begin{lemma}\label{exceptional_with_pd<=1_is_tau-exc}
    Let $\Lambda$ be a finite dimensional algebra. Let $(M_1,\cdots, M_r)$ be a sequence of indecomposable $\Lambda$-modules satisfying the following conditions. 
    \begin{enumerate}
        \item[(i)] $\pd M_i \leq 1$ for $1\leq i \leq r$; 
        \item[(ii)] $\Ext_\Lambda^1(M_i,M_i) = 0$ for $1\leq i\leq r$; 
        \item[(iii)] $\Hom_\Lambda(M_i,M_j) = 0 = \Ext_\Lambda^1(M_i,M_j)$ for $1\leq j < i \leq r$. 
    \end{enumerate}
    Then, $(M_1,\cdots, M_r)$ is a $\tau$-exceptional sequence. 
\end{lemma}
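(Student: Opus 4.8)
The plan is to prove the statement by induction on $r$, following the recursive definition of a $\tau$-exceptional sequence. The base case $r = 1$ is immediate: a single indecomposable module $M_1$ with $\Ext^1_\Lambda(M_1, M_1) = 0$ and $\pd M_1 \le 1$ is $\tau$-rigid by the AR duality \eqref{AR_Duality_pdM<=1}, since $\Hom_\Lambda(M_1, \tau M_1) \cong D\Ext^1_\Lambda(M_1, M_1) = 0$. So a length-one sequence satisfying (i)--(iii) is automatically $\tau$-exceptional.

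For the inductive step, suppose the claim holds for sequences of length $r-1$ over any finite dimensional algebra. Given $(M_1, \dots, M_r)$ satisfying (i)--(iii), the same AR-duality argument shows $M_r$ is $\tau$-rigid in $\modd\Lambda$, so condition (a) in the definition of a $\tau$-exceptional sequence is satisfied. It remains to show that $(M_1, \dots, M_{r-1})$ is a $\tau$-exceptional sequence in the $\tau$-perpendicular category $\W := J(M_r)$. First I would check that each $M_i$ ($1 \le i \le r-1$) actually lies in $\W = M_r^\perp \cap {}^\perp\tau M_r$: the condition $\Hom_\Lambda(M_r, M_i) = 0$ from (iii) gives $M_i \in M_r^\perp$, and $\Ext^1_\Lambda(M_r, M_i) = 0$ together with $\pd M_r \le 1$ and AR duality \eqref{AR_Duality_pdM<=1} gives $\Hom_\Lambda(M_i, \tau M_r) \cong D\Ext^1_\Lambda(M_r, M_i) = 0$, i.e. $M_i \in {}^\perp\tau M_r$. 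So the truncated sequence lives in $\W$, which by Jasso's theorem (and the functorial finiteness/wideness recalled in the introduction) is equivalent to $\modd\Gamma$ for a finite dimensional algebra $\Gamma$ with $n - 1$ simples.

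Now I want to apply the induction hypothesis inside $\W$, so I need to verify that $(M_1, \dots, M_{r-1})$ satisfies the analogues of (i)--(iii) computed in $\W$. For (iii): $\Hom$ and $\Ext^1$ groups between objects of $\W$ can be computed either in $\modd\Lambda$ or in $\W$ — $\Hom$ because $\W$ is a full subcategory, and $\Ext^1$ because $\W$ is a wide (hence extension-closed) subcategory, so $\Ext^1_\W(M_i, M_j) \cong \Ext^1_\Lambda(M_i, M_j) = 0$ and likewise $\Ext^1_\W(M_i,M_i) = 0$, giving (ii) and (iii) relative to $\W$. For (i): I need $\pd_\W M_i \le 1$; this is exactly Proposition \ref{pd_preserved}, which says $\pd_\W M_i \le \pd M_i \le 1$. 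With all three hypotheses verified in $\W$, the induction hypothesis (applied to the algebra $\Gamma$) yields that $(M_1, \dots, M_{r-1})$ is a $\tau$-exceptional sequence in $\W = J(M_r)$, completing the induction.

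The main subtlety — and the one place I would be most careful — is the compatibility of the homological invariants with passage to the wide subcategory $\W$: specifically, that $\Ext^1$ computed in $\W$ agrees with $\Ext^1$ computed in $\modd\Lambda$ for modules lying in $\W$ (this uses extension-closedness of $\W$, valid in degree $1$ but not in higher degrees — which is why the hypothesis $\pd M_i \le 1$ is essential, both to invoke Proposition \ref{pd_preserved} and to make the degree-$1$ vanishing suffice), and that $\pd_\W$ does not jump above $\pd$, which is precisely Proposition \ref{pd_preserved}. Everything else is a direct unwinding of the recursive definition together with the AR-duality identity \eqref{AR_Duality_pdM<=1}.
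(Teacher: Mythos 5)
Your proof is correct and follows essentially the same route as the paper: induction using AR-duality for the base case and for $\tau$-rigidity, Proposition \ref{pd_preserved} to control projective dimension in the perpendicular category, and extension-closedness of wide subcategories to identify $\Ext^1$ there with $\Ext^1_\Lambda$. The only cosmetic difference is that you peel off $M_r$ and recurse into $J(M_r)$, whereas the paper runs the induction from the tail forward inside the iterated perpendicular category $J(M_i,\cdots,M_r)$; the ingredients and the key verifications are identical.
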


\begin{proof}
    We proceed by induction on the length of the sequence. If $r=1$, using the AR-duality \eqref{AR_Duality_pdM<=1}, we have that $0 = \Ext^1_\Lambda(M_1,M_1) \cong \Hom_\Lambda(M_1,\tau M_1)$. Hence, $M_1$ is $\tau$-rigid, giving the base case. 

    Now, let $i\in\{1,\cdots,r\}$ and assume that $(M_i,\cdots, M_r)$ is a $\tau$-exceptional sequence. We want to show that $(M_{i-1},\cdots,M_r)$ is also a $\tau$-exceptional sequence. In other words, we have to prove that $M_{i-1}$ is $\tau$-rigid in $J(M_i,\dots, M_r)$. Note that, using Proposition \ref{pd_preserved} together with the AR-duality \eqref{AR_Duality_pdM<=1}, and the fact that $\tau$-perpendicular categories are wide subcategories of $\modd \Lambda$, we can rewrite $J(M_i,\cdots, M_r)$ as 
    $$J(M_i,\cdots, M_r) = \{X\in\modd\Lambda \mid \Hom_\Lambda(M, X) = 0 = \Ext_\Lambda^1(M,X)\},$$
    where $M = \bigoplus_{j=i}^r M_j$. Hence, $M_{i-1}$ lies in $J(M_i,\cdots,M)$ by the property (iii). Proposition \ref{pd_preserved} implies that $\pd_{J(M_i,\cdots, M_r)} M_{i-1}\leq 1$ and thus 
    \begin{align*}
        \Hom_{J(M_i,\cdots, M_r)}(M_{i-1},\tau_{J(M_i,\cdots, M_r)}M_{i-1}) &\cong \Ext^1_{J(M_i,\cdots, M_r)}(M_{i-1},M_{i-1})\\
        &\cong \Ext_\Lambda^1(M_{i-1}, M_{i-1}) = 0.  
    \end{align*}
    This finishes the proof.
\end{proof}

The next result compares $\tau$-exceptional sequences and $R$-exceptional sequences in $\modd \Lambda$. 

\begin{proposition}\label{tau-exc=R-exc}
    $\tau$-exceptional sequences and $R$-exceptional sequences in $\modd \Lambda$ coincide. 
\end{proposition}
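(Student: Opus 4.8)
The plan is to prove the two inclusions separately.

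The inclusion of $R$-exceptional sequences into $\tau$-exceptional sequences is immediate from Lemma~\ref{exceptional_with_pd<=1_is_tau-exc}. If $(X_1,\dots,X_r)$ is an $R$-exceptional sequence, then each $X_i$ is an indecomposable $RQ$-lattice, hence $\pd X_i\leq 1$ by \cite[Lemma~3.1(a)]{CB_RigidIntegralRep}; each $X_i$ is rigid, so $\Ext^1_\Lambda(X_i,X_i)=0$; and condition (b) of Definition~\ref{R-exc_seq} gives $\Hom_\Lambda(X_i,X_j)=0=\Ext^1_\Lambda(X_i,X_j)$ for $j<i$. These are precisely hypotheses (i)--(iii) of Lemma~\ref{exceptional_with_pd<=1_is_tau-exc}, so the sequence is $\tau$-exceptional. (Observe that the condition $\End_\Lambda(X_i)\cong R$ is not needed for this direction.)

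For the converse, let $(M_1,\dots,M_r)$ be a $\tau$-exceptional sequence in $\modd\Lambda$. By Theorem~\ref{bijection_of_tau_exc_seq} we may write $M_i\cong\Lambda\otimes_{kQ}N_i$ for a unique $\tau$-exceptional sequence $(N_1,\dots,N_r)$ in $\modd kQ$, and since $kQ$ is hereditary each $N_i$ is an exceptional $kQ$-module by Remark~\ref{tau-exc=exc_over_hereditary}; in particular $N_i$ is indecomposable and rigid with $\End_{kQ}(N_i)\cong k$. I would first check that each $M_i$ is $R$-exceptional. As an $R$-module, $M_i=\Lambda\otimes_{kQ}N_i\cong R\otimes_k N_i$ (with $R$ acting on the left tensor factor), which is free of finite rank since $N_i$ is a finite-dimensional $k$-vector space; hence $M_i$ is an $RQ$-lattice, with $R$-module $(M_i)_j\cong R\otimes_k(N_i)_j$ at the vertex $j$. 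Using the compatibility of the induction functor with $\Hom$ over the tensor-product algebra $\Lambda=R\otimes kQ$ (as already used in the proof of Proposition~\ref{analog_of_6.7_BHM}(ii)), $\End_\Lambda(M_i)\cong R\otimes_k\End_{kQ}(N_i)\cong R$; moreover $M_i$ is $\tau$-rigid in $\modd\Lambda$ by Proposition~\ref{tau-rigid-Lambda1:1tau-rigid-kQ}, hence rigid. Thus each $M_i$ is an $R$-exceptional indecomposable $RQ$-lattice.

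It remains to verify $\Hom_\Lambda(M_i,M_j)=0=\Ext^{\geq 1}_\Lambda(M_i,M_j)$ for $j<i$; by the remark following Definition~\ref{R-exc_seq} it suffices to treat $\Hom$ and $\Ext^1$. I would argue by peeling the sequence from the right, mimicking the proof of Lemma~\ref{exceptional_with_pd<=1_is_tau-exc}. Fix $i$ and put $\W:=J(M_{i+1},\dots,M_r)$; this is a wide subcategory of $\modd\Lambda$, equivalent to the module category of a finite-dimensional algebra, in which $(M_1,\dots,M_i)$ is a $\tau$-exceptional sequence. Hence $M_j\in J_\W(M_i)$ for all $j<i$, and since $J_\W(M_i)$ consists of the objects $X\in\W$ with $\Hom_\Lambda(M_i,X)=0=\Hom_\Lambda(X,\tau_\W M_i)$, we get $\Hom_\Lambda(M_i,M_j)=0$. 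For the $\Ext$-vanishing, $M_i$ is an $RQ$-lattice, so $\pd_\Lambda M_i\leq 1$, and therefore $\pd_\W M_i\leq 1$ by Proposition~\ref{pd_preserved}; AR-duality inside $\W$ (cf.\ \eqref{AR_Duality_pdM<=1}) then gives $\Ext^1_\W(M_i,M_j)\cong D\Hom_\W(M_j,\tau_\W M_i)=0$, and this agrees with $\Ext^1_\Lambda(M_i,M_j)$ because $\W$ is closed under extensions. Finally $\pd_\Lambda M_i\leq 1$ forces $\Ext^{\geq 2}_\Lambda(M_i,M_j)=0$. This yields condition (b) of Definition~\ref{R-exc_seq}, so $(M_1,\dots,M_r)$ is $R$-exceptional, completing the proof.

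Once Theorem~\ref{bijection_of_tau_exc_seq}, Proposition~\ref{tau-rigid-Lambda1:1tau-rigid-kQ} and Proposition~\ref{pd_preserved} are available, the argument is largely bookkeeping; the step that needs a little care is transferring exceptionality across the induction functor — specifically the identifications $M_i\cong R\otimes_k N_i$ over $R$ (needed to see $M_i$ is an $RQ$-lattice) and $\End_\Lambda(M_i)\cong R$, together with keeping track of the $\Hom$ groups inside $\W$ when deducing the off-diagonal vanishing. An alternative to the last paragraph would be to obtain those vanishings directly from the exceptionality of $(N_1,\dots,N_r)$ via the compatibility of induction with $\Hom$ and $\Ext$, but the route above only invokes results already recalled in the paper.
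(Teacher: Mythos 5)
Your proof is correct. The first direction (every $R$-exceptional sequence is $\tau$-exceptional) and the verification that each $M_i\cong\Lambda\otimes_{kQ}N_i$ is an $R$-exceptional lattice (freeness of $R\otimes_k N_i$ over $R$, the computation $\End_\Lambda(M_i)\cong R\otimes_k\End_{kQ}(N_i)\cong R$, and rigidity) match the paper's argument essentially step for step; the only cosmetic difference is that you get rigidity of $M_i$ from Proposition \ref{tau-rigid-Lambda1:1tau-rigid-kQ} plus $\pd M_i\leq 1$, whereas the paper computes $\Ext^1_\Lambda(\Lambda\otimes_{kQ}M_i,\Lambda\otimes_{kQ}M_i)\cong\Hom_R(R,R)\otimes\Ext^1_{kQ}(M_i,M_i)=0$ directly.

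Where you genuinely diverge is the off-diagonal vanishing $\Hom_\Lambda(M_i,M_j)=0=\Ext^1_\Lambda(M_i,M_j)$ for $j<i$. The paper obtains this in one line by pushing the exceptionality of the underlying $kQ$-sequence through the induction functor, via $\Hom_\Lambda(\Lambda\otimes_{kQ}N_i,\Lambda\otimes_{kQ}N_j)\cong R\otimes\Hom_{kQ}(N_i,N_j)=0$ and the analogous isomorphism for $\Ext^1$. You instead argue intrinsically: you place $M_j$ in $J_\W(M_i)$ for $\W=J(M_{i+1},\dots,M_r)$, read off the $\Hom$-vanishing from the definition of the $\tau$-perpendicular category, and recover the $\Ext^1$-vanishing from $\pd_\W M_i\leq 1$ (Proposition \ref{pd_preserved}) together with AR-duality in $\W$ and extension-closedness of $\W$ — in effect running the proof of Lemma \ref{exceptional_with_pd<=1_is_tau-exc} in reverse. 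Your route is slightly longer but buys a more general statement: any $\tau$-exceptional sequence whose terms have projective dimension at most one automatically satisfies the $\Hom$/$\Ext$ conditions of Definition \ref{R-exc_seq}(b), with no reference to the induction functor. The paper's route is shorter here because the tensor-product $\Hom$/$\Ext$ isomorphisms are already available. Both arguments are complete; I see no gap.
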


\begin{proof}
    Let $(Y_1,\cdots, Y_r)$ be an $R$-exceptional sequence in $\modd \Lambda$. Then, $\pd Y_i \leq 1$ for $1\leq i \leq r$ by \cite[Lemma 3.1(a)]{CB_RigidIntegralRep}, and thus $(Y_1,\cdots, Y_r)$ is a $\tau$-exceptional sequence in $\modd \Lambda$ by Lemma \ref{exceptional_with_pd<=1_is_tau-exc}.  

    Conversely, let $\X = (X_1,\cdots, X_r)$ be a $\tau$-exceptional sequence in $\modd \Lambda$. Then, $\X = (\Lambda\otimes_{kQ}M_1,\cdots, \Lambda\otimes_{kQ}M_r)$ for a unique exceptional sequence $(M_1,\cdots, M_r)$ in $\modd kQ$ by Theorem \ref{bijection_of_tau_exc_seq}. We show that $\X$ is an $R$-exceptional sequence in $\modd \Lambda$, that is $\Lambda\otimes_{kQ}M_i$ is an $R$-exceptional $RQ$-lattice for all $1\leq i \leq r$ and $\Hom_\Lambda(\Lambda\otimes_{kQ}M_i,\Lambda\otimes_{kQ}M_j) = 0 = \Ext_\Lambda^1(\Lambda\otimes_{kQ}M_i,\Lambda\otimes_{kQ}M_j)$ for $1\leq j <i \leq r$. Let $\overline{e_j}\in \Lambda$ be the trivial path at the vertex $j$. Then 
    $$ \overline{e_j}(\Lambda\otimes_{kQ}M_i) \cong (1\otimes e_j)(R\otimes kQ \otimes_{kQ} M_i) \cong (1\otimes e_j)(R\otimes M_i) \cong (R\otimes e_jM_i) \cong R^{\dim_k( e_jM_i)} $$
    where $e_j$ is the trivial path in $kQ$ at the vertex $j$. Since $\Lambda\otimes_{kQ}M_i = \bigoplus_{j=1}^n \overline{e_j}(\Lambda\otimes_{kQ}M_i)$ as an $R$-module, it is finitely generated and projective as an $R$-module. Moreover, using the fact that $M_i$ is an exceptional $kQ$-module, we obtain
    \begin{align*}
    \Ext_\Lambda^1(\Lambda\otimes_{kQ}M_i, \Lambda\otimes_{kQ}M_i) &\cong \Ext^1_{R\otimes kQ}(R\otimes M_i, R\otimes M_i)\\
    &\cong \Hom_R(R,R)\otimes \Ext_{kQ}^1(M_i,M_i) = 0,
    \end{align*}
    and 
    \begin{align*}
        \End_\Lambda(\Lambda\otimes_{kQ}M_i) &\cong \Hom_{R\otimes kQ}(R\otimes M_i, R\otimes M_i) \\
        &\cong \Hom_R(R,R)\otimes \Hom_{kQ}(M_i,M_i)\\
        &\cong R\otimes k \\
        &\cong R.
    \end{align*}
This shows that $\Lambda\otimes_{kQ}M_i$ is an $R$-exceptional $RQ$-lattice for all $1\leq i\leq r$. Since $(M_1,\cdots, M_r)$ is an exceptional sequence in $\modd kQ$, it follows that 
$$\Hom_\Lambda(\Lambda\otimes_{kQ}M_i,\Lambda\otimes_{kQ}M_j) \cong R\otimes \Hom_{kQ}(M_i, M_j) = 0,$$ and 
$$\Ext_\Lambda^1(\Lambda\otimes_{kQ}M_i,\Lambda\otimes_{kQ}M_j) = R\otimes \Ext_{kQ}^1(M_i,M_j) = 0$$
for all $1\leq j < i\leq r$. This proves that $\X$ is an $R$-exceptional sequence. The result follows. 
\end{proof}

Let $R$ be a commutative ring. Recall that a finitely generated $R$-module $P$ has \textit{constant rank} $n$ if $P_\mathfrak{p}$ is a free $R_\mathfrak{p}$-module of rank $n$ for each prime ideal $\mathfrak{p}$ of $R$, or equivalently for each maximal ideal. If $R$ has no idempotents other than $0$ and $1$, then every finitely generated projective $R$-module has constant rank (see \cite[Page 375]{CB_RigidIntegralRep}). In particular, this is the case when $R$ is a finite dimensional local commutative $k$-algebra. 

\begin{theorem}[{\cite[Thm. 4.2]{CB_RigidIntegralRep}}]\label{braid_group_on_R-exc}
    Let $R$ be a commutative ring and $Q$ be an acyclic quiver. Then, the braid group acts transitively on the set of complete $R$-exceptional sequences of pointwise constant rank over $RQ$. 
\end{theorem}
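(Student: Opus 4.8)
The plan is to reduce the statement to the field case, where transitivity of the braid action on complete exceptional sequences is classical \cite{Boevey-ExcSeq, braid_group_action_on_exc_seq_Ringel}, by specializing an $RQ$-lattice at the residue fields of $R$ and then lifting braid moves back to $R$. Throughout, the key structural fact is that an $RQ$-lattice is projective over $R$ and has $\pd X \leq 1$ over $RQ$ (\cite[Lemma 3.1(a)]{CB_RigidIntegralRep}), which forces the relevant $\Hom$- and $\Ext^1$-groups to base change correctly.

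First I would set up specialization. For each maximal ideal $\mathfrak m \subseteq R$ with residue field $\kappa = R/\mathfrak m$, sending an $RQ$-lattice $X$ to $\overline X := X \otimes_R \kappa$ defines a functor to $\modd \kappa Q$. Since $X$ is $R$-projective, $\dim_\kappa \overline X_i = \operatorname{rank}_R X_i$, so the hypothesis of pointwise constant rank says precisely that the dimension vector of $\overline X$ is independent of $\mathfrak m$. Choosing an $R$-split projective resolution $0 \to P_1 \to P_0 \to X \to 0$ by $RQ$-lattices (which exists because $X$ is $R$-projective), one checks that $\Hom_{RQ}(X,Y) \otimes_R \kappa \cong \Hom_{\kappa Q}(\overline X, \overline Y)$ and $\Ext^1_{RQ}(X,Y) \otimes_R \kappa \cong \Ext^1_{\kappa Q}(\overline X, \overline Y)$ for lattices $X, Y$. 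Consequently $R$-exceptionality of $X$ (rigidity together with $\End_{RQ}(X) \cong R$) specializes to exceptionality of $\overline X$ over $\kappa Q$, and a complete $R$-exceptional sequence specializes to a complete exceptional sequence over $\kappa Q$ whose sequence of dimension vectors does not depend on $\mathfrak m$.

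Next I would show that braid mutation of $R$-exceptional sequences is compatible with specialization. Mutation of an $R$-exceptional pair is performed by forming minimal left and right $\add$-approximations and taking the kernel or cokernel; because all objects involved are $R$-projective, these constructions are $R$-split and therefore commute with $-\otimes_R \kappa$. Hence the braid action on complete $R$-exceptional sequences lies over the braid action on complete exceptional sequences over each $\kappa Q$, and it acts on the common dimension-vector sequence exactly by the field-case mutation formulas. The argument then proceeds by induction on $\abs{\Lambda}$, passing to perpendicular categories: given an $R$-exceptional lattice $X$, its perpendicular category inside the category of lattices is again equivalent to the category of $RQ'$-lattices for an acyclic quiver $Q'$ with $\abs{\Lambda}-1$ vertices (the integral analog of the Geigle--Lenzing equivalence \cite{Geigle_Lenzing_Perp_Subcat}, \cite[Section 4]{CB_RigidIntegralRep}). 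One braids two given complete $R$-exceptional sequences so that their final terms agree, then concludes by induction inside the perpendicular category.

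The main obstacle, and the heart of the integral theory, is the rigidity and unique-lifting statement: a rigid $R$-exceptional lattice is determined up to isomorphism by its constant dimension vector, and a braid element realizing an equality of two specialized sequences over some $\kappa Q$ lifts to a braid element over $R$ identifying the two $R$-exceptional sequences. This is where $R$-projectivity is indispensable, since it makes the obstruction and automorphism spaces controlling lifting (computed by the base-changed $\Ext^1$ and $\Hom$ groups above) vanish or reduce to scalars, so that the field-level identifications furnished by the classical transitivity theorem propagate over all of $R$ simultaneously.
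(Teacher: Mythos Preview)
The paper does not give its own proof of this statement: Theorem~\ref{braid_group_on_R-exc} is quoted verbatim from \cite[Thm.~4.2]{CB_RigidIntegralRep} and used as a black box in Section~\ref{secRQ-lattices}. There is therefore no proof in the paper to compare your proposal against. Your sketch is broadly in the spirit of Crawley-Boevey's original argument (base change to residue fields, compatibility of mutation with specialization, and a rigidity/lifting statement for lattices of fixed rank vector), but assessing its correctness would require checking it against \cite{CB_RigidIntegralRep} rather than the present paper.
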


Let $\Lambda = R\otimes kQ$. Combining Proposition \ref{tau-exc=R-exc} and Theorem \ref{braid_group_on_R-exc}, we conclude that the braid group acts transitively on the set of complete $\tau$-exceptional sequences in $\modd \Lambda$. This provides a different approach to Corollary \ref{transitive_action_on_tau-exc_seq_over_Lambda}. In summary, we obtain the following result. 

\begin{theorem}\label{braid_action_on_R-exc=braid_action_on_tau-exc}
    Let $\Lambda = R\otimes kQ$. Then, the mutation of complete $R$-exceptional sequences in the sense of Crawley-Boevey coincides with the BHM mutation of complete $\tau$-exceptional sequences in $\modd \Lambda$. In particular, the braid actions on complete $R$-exceptional sequences and complete $\tau$-exceptional sequences coincide.
\end{theorem}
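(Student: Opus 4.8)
The plan is to show that Crawley--Boevey's mutation and the BHM mutation agree generator by generator, and then to deduce the statement about braid actions. By Proposition \ref{tau-exc=R-exc} a sequence in $\modd\Lambda$ is $R$-exceptional precisely when it is $\tau$-exceptional, so complete $R$-exceptional and complete $\tau$-exceptional sequences form one and the same set $\mathcal{S}$; moreover, since $R$ is local every $RQ$-lattice has pointwise constant rank (see the remark preceding Theorem \ref{braid_group_on_R-exc}), so Crawley--Boevey's action is defined on all of $\mathcal{S}$. Write $\rho_1,\dots,\rho_{n-1}$ for its generators and $\varphi_1,\dots,\varphi_{n-1}$ for the BHM mutations; each $\rho_i$ is a bijection of $\mathcal{S}$ by Theorem \ref{braid_group_on_R-exc}, and each $\varphi_i$ is a bijection of $\mathcal{S}$ by Corollary \ref{Lambda_is_mutation_complete} together with Theorem \ref{main_thm_BHM}(i). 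Since the braid group $B_n$ is generated by $\sigma_1,\dots,\sigma_{n-1}$, it suffices to prove $\rho_i=\varphi_i$ for every $i$; the two $B_n$-actions then coincide, and transitivity is Corollary \ref{transitive_action_on_tau-exc_seq_over_Lambda} (equivalently Theorem \ref{braid_group_on_R-exc}).

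Fix $\X=\Lambda\otimes_{kQ}\M\in\mathcal{S}$ with $\M=(M_1,\dots,M_n)$ a complete exceptional sequence in $\modd kQ$ (Theorem \ref{bijection_of_tau_exc_seq}), and fix $i$. By Theorem \ref{mutation_in_Lambda}, $\varphi_i(\X)=\Lambda\otimes_{kQ}\sigma_i(\M)$; by the description of $\sigma_i$ recalled in Section \ref{secMutation} this is the sequence obtained from $\X$ by keeping all entries in positions outside $\{i,i+1\}$, placing the old $i$-th entry $\Lambda\otimes_{kQ}M_i$ into position $i+1$, and inserting a new indecomposable into position $i$ (cf. Proposition \ref{mutation_pairs_in_Lambda}). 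Crawley--Boevey's mutation $\rho_i$ is defined, exactly as Ringel's classical mutation, from the minimal left/right $\add$-approximation sequences among the entries, so $\rho_i(\X)$ has the identical shape: positions outside $\{i,i+1\}$ are untouched, position $i+1$ receives the old $i$-th entry $\Lambda\otimes_{kQ}M_i$, and position $i$ receives a new indecomposable $R$-exceptional $RQ$-lattice. Hence $\varphi_i(\X)$ and $\rho_i(\X)$ are two complete $\tau$-exceptional sequences agreeing in every position except possibly position $i$, and Theorem \ref{uniqueness_property_of_tau_exc_seq} forces them to agree there as well. Therefore $\rho_i(\X)=\varphi_i(\X)$.

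Alternatively, and more in the spirit of Section \ref{secMutation}, one can bypass the appeal to uniqueness by checking that $\rho_i$ commutes with the induction functor directly: since $\Lambda\otimes_{kQ}-$ is exact and satisfies $\Hom_\Lambda(\Lambda\otimes_{kQ}M,\Lambda\otimes_{kQ}N)\cong R\otimes\Hom_{kQ}(M,N)$ and $\Ext^1_\Lambda(\Lambda\otimes_{kQ}M,\Lambda\otimes_{kQ}N)\cong R\otimes\Ext^1_{kQ}(M,N)$ (as in the proof of Proposition \ref{tau-exc=R-exc}), it carries the minimal approximation sequences in $\modd kQ$ defining $\sigma_i(\M)$ to those in $\modd\Lambda$ defining $\rho_i(\Lambda\otimes_{kQ}\M)$, which gives $\rho_i(\Lambda\otimes_{kQ}\M)=\Lambda\otimes_{kQ}\sigma_i(\M)=\varphi_i(\Lambda\otimes_{kQ}\M)$ again. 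Either way $\rho_i=\varphi_i$ for all $i$, and the theorem follows. The one point that genuinely needs care is the bookkeeping that translates Crawley--Boevey's approximation-theoretic definition of $\rho_i$ for $RQ$-lattices into the present framework, i.e. verifying that $\rho_i$ has the expected combinatorial shape on $\mathcal{S}$ — equivalently, that it is compatible with induction; once this is granted, the uniqueness property of $\tau$-exceptional sequences closes the argument at no extra cost.
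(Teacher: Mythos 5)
Your main argument is essentially the paper's own proof: both identify $R$-exceptional with $\tau$-exceptional sequences via Proposition \ref{tau-exc=R-exc}, observe that the Crawley--Boevey mutation $\rho_i(\X)$ and $\varphi_i(\X)=\Lambda\otimes_{kQ}\sigma_i(\M)$ (Theorem \ref{mutation_in_Lambda}) agree in every position except possibly the $i$-th, and invoke the uniqueness property of Theorem \ref{uniqueness_property_of_tau_exc_seq} to conclude. Your alternative sketch (checking directly that $\rho_i$ commutes with induction) is not in the paper and is left unverified, but it is only offered as an optional bypass, so the proof stands as written.
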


\begin{proof}
    Let $\X$ be an $R$-exceptional sequence. By Proposition \ref{tau-exc=R-exc} and Theorem \ref{bijection_of_tau_exc_seq}, $\X$ is of the form $(\Lambda\otimes_{kQ}M_1, \cdots, \Lambda\otimes_{kQ}M_n)$ for a unique exceptional sequence $(M_1,\cdots, M_n)$ in $\modd kQ$. Let $i\in \{1,\cdots, n\}$ and denote by $\psi_i(\X)$ the $i$-th left mutation of $\X$ in the sense of Crawley-Boevey. Then, 
    $$\psi_i(\X) = (\Lambda\otimes_{kQ}M_1,\cdots, \Lambda\otimes_{kQ}M_{i-1}, \widetilde{X}, \Lambda\otimes_{kQ}M_i,\cdots, \Lambda\otimes_{kQ}M_n),$$ 
    where $\widetilde{X}$ is described in \cite[Theorem 4.2]{CB_RigidIntegralRep}. On the other hand, by Theorem \ref{mutation_in_Lambda} we have 
    \begin{align*}
        \varphi_i(\X) &= \Lambda\otimes_{kQ}\sigma_i(M_1,\cdots, M_n)\\
        &= \Lambda\otimes_{kQ}(M_1,\cdots,M_{i-1},Y,M_i,\cdots, M_n)\\
        &= (\Lambda\otimes_{kQ}M_1, \cdots,\Lambda\otimes_{kQ}M_{i-1}, \Lambda\otimes_{kQ}Y,\Lambda\otimes_{kQ}M_i,\cdots, M_n),
    \end{align*}
     where $Y$ is the unique indecomposable exceptional $kQ$-module such that $\sigma_i(M_1,\cdots, M_n)$ is a complete exceptional sequence (see the beginning of Section \ref{secMutation}). It follows that $\psi_i(\X) = \varphi_i(\X)$ by Theorem \ref{uniqueness_property_of_tau_exc_seq}. In particular, the braid actions on complete $R$-exceptional and $\tau$-exceptional sequences coincide. 
\end{proof}

\section{An example}\label{secExample}

Let $t\geq 2 $. Define $\Lambda$ to be the tensor product of $R = k[x]/(x^t)$ with $kQ = k\Big(\begin{tikzcd}[ampersand replacement=\&,cramped]
	1 \& 2 \& 3
	\arrow["a", from=1-1, to=1-2]
	\arrow["b", from=1-2, to=1-3]
\end{tikzcd}\Big)$ over the field $k$. Then, $\Lambda \cong kQ_\Lambda/I$, where $Q_\Lambda$ is the quiver 

\[\begin{tikzcd}[ampersand replacement=\&,cramped]
	1 \& 2 \& 3
	\arrow["{x_1}", from=1-1, to=1-1, loop, in=55, out=125, distance=10mm]
	\arrow["a", from=1-1, to=1-2]
	\arrow["{x_2}", from=1-2, to=1-2, loop, in=55, out=125, distance=10mm]
	\arrow["b", from=1-2, to=1-3]
	\arrow["{x_3}", from=1-3, to=1-3, loop, in=55, out=125, distance=10mm]
\end{tikzcd}\]

and $I$ is the ideal generated by $(x_i^t, ax_1-x_2a, bx_2-x_3b)$, for $i = 1,2,3$. In particular, $\Lambda$ is tame for $t = 4$ and wild for $t\geq 5$ (see \cite[Prop. 4.1]{wang2024representation}).

Notice that every indecomposable $kQ$-module is $\tau$-rigid. Moreover, it is well-known that $P\otimes Q$ is a projective (resp. injective) $\Lambda$-module if and only if $P$ and $Q$ are projective (resp. injective) $kQ$-modules. Hence, since $\Lambda\otimes_{kQ}X  = R\otimes (kQ\otimes_{kQ} X) \cong R\otimes X$ for every $kQ$-module $X$ and, $R$ is projective-injective, using Proposition \ref{tau-rigid-Lambda1:1tau-rigid-kQ} the indecomposable $\tau$-rigid $\Lambda$-modules are given by $P_i, I_i$, where $P_i$ (resp. $I_i$) denotes the indecomposable projective (resp. injective) $\Lambda$-module at the vertex $i$, and $M = \Lambda\otimes_{kQ}S_2$, where $S_2$ is the simple $kQ$-module at vertex 2.

Now, using Theorem \ref{bijection_of_tau_exc_seq} and Theorem \ref{mutation_in_Lambda}, the mutation graph of complete $\tau$-exceptional sequences in $\modd \Lambda$ can be computed by applying the induction functor to the mutation graph of complete ($\tau$-)exceptional sequences in $\modd kQ$ (see Figure \ref{mutation_graph}; the action of $\varphi_1$ is indicated by solid arrows, while dashed arrows indicate the action of $\varphi_2$). In particular, the braid relations hold, that is $\varphi_1\circ\varphi_2\circ\varphi_1\widetilde{\M} = \varphi_2\circ\varphi_1\circ\varphi_2\widetilde{\M}$ for every complete $\tau$-exceptional sequence $\widetilde{\M}$ in $\modd \Lambda$ (see Corollary \ref{transitive_action_on_tau-exc_seq_over_Lambda}).
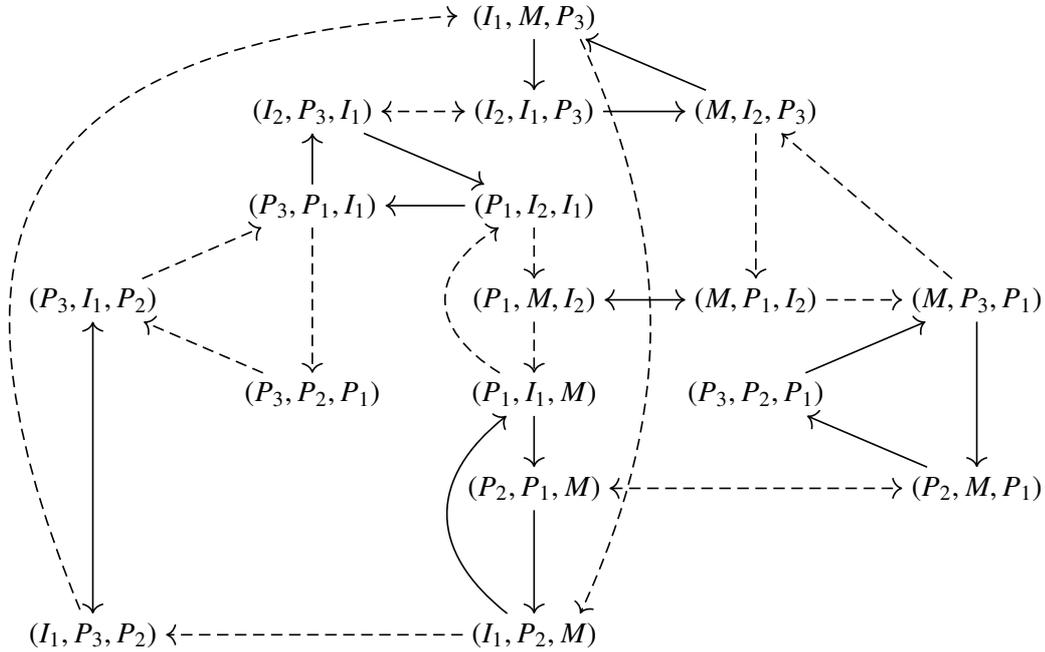
\begin{figure}[h]
    \centering
\[\begin{tikzcd}[ampersand replacement=\&,cramped]
	\&\& {(I_1, M, P_3)} \\
	\& {(I_2, P_3, I_1)} \& {(I_2, I_1, P_3)} \& {(M,I_2,P_3)} \\
	\& {(P_3,P_1,I_1)} \& {(P_1,I_2,I_1)} \\
	{(P_3,I_1,P_2)} \&\& {(P_1,M,I_2)} \& {(M,P_1,I_2)} \& {(M,P_3,P_1)} \\
	\& {(P_3,P_2,P_1)} \& {(P_1,I_1,M)} \& {(P_3,P_2,P_1)} \\
	\&\& {(P_2,P_1,M)} \&\& {(P_2,M,P_1)} \\
	\\
	{(I_1, P_3, P_2)} \&\& {(I_1,P_2,M)}
	\arrow[from=1-3, to=2-3]
	\arrow[dashed, shift left=5, curve={height=-35pt}, from=1-3, to=8-3]
	\arrow[from=2-2, to=3-3]
	\arrow[dashed, tail reversed, from=2-3, to=2-2]
	\arrow[from=2-3, to=2-4]
	\arrow[from=2-4, to=1-3]
	\arrow[dashed, from=2-4, to=4-4]
	\arrow[from=3-2, to=2-2]
	\arrow[dashed, from=3-2, to=5-2]
	\arrow[from=3-3, to=3-2]
	\arrow[dashed, from=3-3, to=4-3]
	\arrow[dashed, from=4-1, to=3-2]
	\arrow[tail reversed, from=4-3, to=4-4]
	\arrow[dashed, from=4-3, to=5-3]
	\arrow[dashed, from=4-4, to=4-5]
	\arrow[dashed, from=4-5, to=2-4]
	\arrow[from=4-5, to=6-5]
	\arrow[dashed, from=5-2, to=4-1]
	\arrow[curve={height=-40pt}, dashed, from=5-3, to=3-3]
	\arrow[from=5-3, to=6-3]
	\arrow[from=5-4, to=4-5]
	\arrow[dashed, tail reversed, from=6-3, to=6-5]
	\arrow[from=6-3, to=8-3]
	\arrow[from=6-5, to=5-4]
	\arrow[dashed, shift left = 1, curve={height=-130pt}, from=8-1, to=1-3]
	\arrow[ tail reversed, from=8-1, to=4-1]
	\arrow[curve={height=-40pt}, from=8-3, to=5-3]
	\arrow[dashed, from=8-3, to=8-1]
\end{tikzcd}\]
    \caption{Mutation graph of complete $\tau$-exceptional sequences in $\modd \Lambda$.}
    \label{mutation_graph}
\end{figure}

\vspace{10pt}

\paragraph{\textbf{Acknowledgment:}} I express my gratitude to Bethany R. Marsh for her supervision, for our countless inspirational discussions, and her meticulous proofreading. I am also grateful for financial support from the University of Leeds and the EPSRC Programme Grant EP/W007509/1.
\maketitle

\nocite{*}
\bibliographystyle{alpha}
\bibliography{references}

\newcommand{\etalchar}[1]{$^{#1}$}
\begin{thebibliography}{BMR{\etalchar{+}}06}

\bibitem[AIR14]{tau-tiling-theory}
Takahide Adachi, Osamu Iyama, and Idun Reiten.
\newblock {$\tau$}-tilting theory.
\newblock {\em Compos. Math.}, 150(3):415--452, 2014.

\bibitem[AS81]{AlmostSplitSeqInSubcategories}
M.~Auslander and Sverre~O. Smal\o.
\newblock Almost split sequences in subcategories.
\newblock {\em J. Algebra}, 69(2):426--454, 1981.

\bibitem[ASS06]{ASS}
Ibrahim Assem, Daniel Simson, and Andrzej Skowro\'{n}ski.
\newblock {\em Elements of the representation theory of associative algebras. {V}ol. 1}, volume~65 of {\em London Mathematical Society Student Texts}.
\newblock Cambridge University Press, Cambridge, 2006.
\newblock Techniques of representation theory.

\bibitem[BH22]{barnard2022exceptional}
Emily Barnard and Eric~J Hanson.
\newblock Exceptional sequences in semidistributive lattices and the poset topology of wide subcategories.
\newblock {\em arXiv:2209.11734. To appear in J. Pure Appl. Algebra.}, 2022.

\bibitem[BH23]{tau-perpendicular_wide_subategories}
Aslak~Bakke Buan and Eric~J. Hanson.
\newblock {$\tau$}-perpendicular wide subcategories.
\newblock {\em Nagoya Math. J.}, 252:959--984, 2023.

\bibitem[BHM24]{BHM_mutation}
Aslak~B Buan, Eric~J Hanson, and Bethany~R. Marsh.
\newblock Mutation of ${\tau}$-exceptional pairs and sequences.
\newblock {\em arXiv:2402.10301}, 2024.

\bibitem[BKT25]{BKH_mutating_ordered-tau-rigid}
Aslak~B Buan, Maximilian Kaipel, and H{\aa}vard~U Terland.
\newblock Mutating ordered $\tau$-rigid modules with applications to {Nakayama} algebras.
\newblock {\em arXiv preprint arXiv:2501.13694}, 2025.

\bibitem[BM21a]{a_category_of_wide_subcategories}
Aslak~Bakke Buan and Bethany~R. Marsh.
\newblock A category of wide subcategories.
\newblock {\em Int. Math. Res. Not. IMRN}, (13):10278--10338, 2021.

\bibitem[BM21b]{tauExcSeq_BM}
Aslak~Bakke Buan and Bethany~Rose Marsh.
\newblock $\tau$-exceptional sequences.
\newblock {\em J. Algebra}, 585:36--68, 2021.

\bibitem[BM23]{Mutating-signed-tau-exceptional-sequences}
Aslak~Bakke Buan and Bethany~Rose Marsh.
\newblock Mutating signed {$\tau$}-exceptional sequences.
\newblock {\em Glasg. Math. J.}, 65(3):716--729, 2023.

\bibitem[BMR{\etalchar{+}}06]{Tilting_Theory_and_Cluster_Combinatorics}
Aslak~Bakke Buan, Bethany Marsh, Markus Reineke, Idun Reiten, and Gordana Todorov.
\newblock Tilting theory and cluster combinatorics.
\newblock {\em Adv. Math.}, 204(2):572--618, 2006.

\bibitem[Bon89]{Bondal}
A.~I. Bondal.
\newblock Representations of associative algebras and coherent sheaves.
\newblock {\em Izv. Akad. Nauk SSSR Ser. Mat.}, 53(1):25--44, 1989.

\bibitem[B{\o}r21]{borve21_two-term}
Erlend~D B{\o}rve.
\newblock Two-term silting and $\tau$-cluster morphism categories.
\newblock {\em arXiv preprint arXiv:2110.03472}, 2021.

\bibitem[BRT11]{ThreeKindsOfMutations}
Aslak~Bakke Buan, Idun Reiten, and Hugh Thomas.
\newblock Three kinds of mutation.
\newblock {\em J. Algebra}, 339:97--113, 2011.

\bibitem[BST19]{Wall_Chamber_Structure}
Thomas Br\"{u}stle, David Smith, and Hipolito Treffinger.
\newblock Wall and chamber structure for finite-dimensional algebras.
\newblock {\em Adv. Math.}, 354:106746, 31, 2019.

\bibitem[CB92]{Boevey-ExcSeq}
William Crawley-Boevey.
\newblock Exceptional sequences of representations of quivers.
\newblock In {\em Proceedings of the {S}ixth {I}nternational {C}onference on {R}epresentations of {A}lgebras ({O}ttawa, {ON}, 1992)}, volume~14 of {\em Carleton-Ottawa Math. Lecture Note Ser.}, page~7. Carleton Univ., Ottawa, ON, 1992.

\bibitem[CB24]{CB_RigidIntegralRep}
William Crawley-Boevey.
\newblock Rigid integral representations of quivers over arbitrary commutative rings.
\newblock {\em Annals of Representation Theory}, 1(3):375--384, 2024.

\bibitem[CE99]{CartanEilenberg}
Henri Cartan and Samuel Eilenberg.
\newblock {\em Homological algebra}.
\newblock Princeton Landmarks in Mathematics. Princeton University Press, Princeton, NJ, 1999.
\newblock With an appendix by David A. Buchsbaum, Reprint of the 1956 original.

\bibitem[DIJ19]{tauTiltingFiniteAlgebrasAnd_g-vectors}
Laurent Demonet, Osamu Iyama, and Gustavo Jasso.
\newblock {$\tau$}-tilting finite algebras, bricks, and {$g$}-vectors.
\newblock {\em Int. Math. Res. Not. IMRN}, (3):852--892, 2019.

\bibitem[DIR{\etalchar{+}}23]{Beyond_tau-tilting_theory}
Laurent Demonet, Osamu Iyama, Nathan Reading, Idun Reiten, and Hugh Thomas.
\newblock Lattice theory of torsion classes: beyond {$\tau$}-tilting theory.
\newblock {\em Trans. Amer. Math. Soc. Ser. B}, 10:542--612, 2023.

\bibitem[EJR18]{ReductionTheoremForTauRigidModules}
Florian Eisele, Geoffrey Janssens, and Theo Raedschelders.
\newblock A reduction theorem for {$\tau$}-rigid modules.
\newblock {\em Math. Z.}, 290(3-4):1377--1413, 2018.

\bibitem[Eno22]{Rigid_ICE-subcategories}
Haruhisa Enomoto.
\newblock Rigid modules and {ICE}-closed subcategories in quiver representations.
\newblock {\em J. Algebra}, 594:364--388, 2022.

\bibitem[ES22]{Enomoto-Sakai}
Haruhisa Enomoto and Arashi Sakai.
\newblock I{CE}-closed subcategories and wide {$\tau$}-tilting modules.
\newblock {\em Math. Z.}, 300(1):541--577, 2022.

\bibitem[GL91]{Geigle_Lenzing_Perp_Subcat}
Werner Geigle and Helmut Lenzing.
\newblock Perpendicular categories with applications to representations and sheaves.
\newblock {\em J. Algebra}, 144(2):273--343, 1991.

\bibitem[GLS17]{Quivers_with_relations_for_symmetrizable_Cartan_matrices_I}
Christof Geiss, Bernard Leclerc, and Jan Schr\"{o}er.
\newblock Quivers with relations for symmetrizable {C}artan matrices {I}: {F}oundations.
\newblock {\em Invent. Math.}, 209(1):61--158, 2017.

\bibitem[Gor88]{Gorodentsev}
A.~L. Gorodentsev.
\newblock Exceptional bundles on surfaces with a moving anticanonical class.
\newblock {\em Izv. Akad. Nauk SSSR Ser. Mat.}, 52(4):740--757, 895, 1988.

\bibitem[GR87]{GR}
A.~L. Gorodentsev and A.~N. Rudakov.
\newblock Exceptional vector bundles on projective spaces.
\newblock {\em Duke Math. J.}, 54(1):115--130, 1987.

\bibitem[HI21a]{Pairwise_compatibility_for_2-simple_minded_collections}
Eric~J. Hanson and Kiyoshi Igusa.
\newblock Pairwise compatibility for 2-simple minded collections.
\newblock {\em J. Pure Appl. Algebra}, 225(6):Paper No. 106598, 31, 2021.

\bibitem[HI21b]{Hanson-Igusa_tau-cluster_morphism_categories_and_picture_groups}
Eric~J. Hanson and Kiyoshi Igusa.
\newblock {$\tau$}-cluster morphism categories and picture groups.
\newblock {\em Comm. Algebra}, 49(10):4376--4415, 2021.

\bibitem[HS97]{A_Course_In_Homological_Algebra}
P.~J. Hilton and U.~Stammbach.
\newblock {\em A course in homological algebra}, volume~4 of {\em Graduate Texts in Mathematics}.
\newblock Springer-Verlag, New York, second edition, 1997.

\bibitem[HT24]{uniqueness_of_tau_exc_seq}
Eric~J. Hanson and Hugh Thomas.
\newblock A uniqueness property of {$\tau$}-exceptional sequences.
\newblock {\em Algebr. Represent. Theory}, 27(1):461--468, 2024.

\bibitem[IT09]{Noncrossing_partitions_and_representations_of_quivers}
Colin Ingalls and Hugh Thomas.
\newblock Noncrossing partitions and representations of quivers.
\newblock {\em Compos. Math.}, 145(6):1533--1562, 2009.

\bibitem[IT17]{signed_exc_seq}
K.~Igusa and G.~Todorov.
\newblock Signed exceptional sequences and the cluster morphism category.
\newblock preprint arXiv:1706.02041v1 [math.RT], 2017.

\bibitem[ITW16]{igusa2016picture}
Kiyoshi Igusa, Gordana Todorov, and Jerzy Weyman.
\newblock Picture groups of finite type and cohomology in type ${A_n}$.
\newblock {\em arXiv:1609.02636}, 2016.

\bibitem[Jas15]{Jasso_Reduction}
Gustavo Jasso.
\newblock Reduction of {$\tau$}-tilting modules and torsion pairs.
\newblock {\em Int. Math. Res. Not. IMRN}, (16):7190--7237, 2015.

\bibitem[Kai25]{Kaipel_the_category_of_a_partitioned_fan}
Maximilian Kaipel.
\newblock The category of a partitioned fan.
\newblock {\em J. Lond. Math. Soc. (2)}, 111(2):Paper No. e70071, 2025.

\bibitem[Les94]{On_the_representation_type_of_tensor_product_algebras}
Zbigniew Leszczy\'nski.
\newblock On the representation type of tensor product algebras.
\newblock {\em Fund. Math.}, 144(2):143--161, 1994.

\bibitem[Lu22]{TensorOfHigherAPR-Tilting}
Xiaojian Lu.
\newblock Tensor products of higher {APR} tilting modules.
\newblock {\em arXiv:2211.04962}, 2022.

\bibitem[MHT20]{Stratifying_Sistems_Hipolito}
Octavio Mendoza~Hern\'{a}ndez and Hipolito Treffinger.
\newblock Stratifying systems through {$\tau$}-tilting theory.
\newblock {\em Doc. Math.}, 25:701--720, 2020.

\bibitem[M{\v{S}}17]{Torsion-classes-wide-subcategories-and-localisations}
Frederik Marks and Jan {\v{S}}{\v{t}}ov{\'\i}{\v{c}}ek.
\newblock Torsion classes, wide subcategories and localisations.
\newblock {\em Bull. Lond. Math. Soc.}, 49(3):405--416, 2017.

\bibitem[Non25]{nonis2025tau}
Iacopo Nonis.
\newblock $\tau$-exceptional sequences for representations of quivers over local algebras.
\newblock {\em arXiv preprint arXiv:2502.15417}, 2025.

\bibitem[Rin94]{braid_group_action_on_exc_seq_Ringel}
Claus~Michael Ringel.
\newblock The braid group action on the set of exceptional sequences of a hereditary {A}rtin algebra.
\newblock In {\em Abelian group theory and related topics ({O}berwolfach, 1993)}, volume 171 of {\em Contemp. Math.}, pages 339--352. Amer. Math. Soc., Providence, RI, 1994.

\bibitem[RZ17]{RepOfQuiversOverTheDualNumbers}
Claus~Michael Ringel and Pu~Zhang.
\newblock Representations of quivers over the algebra of dual numbers.
\newblock {\em J. Algebra}, 475:327--360, 2017.

\bibitem[STTW23]{a_geometric_perspective_on_the_tau_cluster_morphism_category}
Sibylle Schroll, Aran Tattar, Hipolito Treffinger, and Nicholas~J Williams.
\newblock A geometric perspective on the $\tau$-cluster morphism category.
\newblock {\em arXiv:2302.12217}, 2023.

\bibitem[Wan24]{wang2024representation}
Qi~Wang.
\newblock Representation-finite tensor product algebras.
\newblock {\em arXiv preprint arXiv:2407.11363}, 2024.

\end{thebibliography}

\end{document}